\documentclass[]{amsart}

\usepackage[utf8]{inputenc}
\usepackage[T1]{fontenc}

\usepackage{lineno,hyperref, graphics, graphicx, xcolor, frcursive,comment,booktabs}
\usepackage{amsmath}
\usepackage{latexsym}
\usepackage{amsthm}
\usepackage{amssymb, ifthen, enumerate}
\usepackage{longtable}

\usepackage{color}
\usepackage{ulem}

\newtheorem{lemma}{Lemma}[section]
\newtheorem{thm}[lemma]{Theorem}
\newtheorem{prop}[lemma]{Proposition}

\newtheorem{rem}[lemma]{Remark}
\newtheorem{hyp}[lemma]{Hypothesis}

\newtheorem{ex}[lemma]{Example}

\newcommand{\Z}{\mathbbm{Z}}	
\newcommand{\C}{\mathbbm{C}}	
\newcommand{\Alt}{\mathrm{Alt}}     
\newcommand{\Sym}{\mathrm{Sym}}     

\newcommand{\aut}{\textrm{Aut}}

\newcommand{\fix}{\textrm{fix}}
\newcommand{\id}{\textrm{id}}
\newcommand{\GL}{\textrm{GL}}
\newcommand{\SL}{\textrm{SL}}

\newcommand{\Gal}{\textrm{Gal}}

\newcommand{\PSL}{\textrm{PSL}}
\newcommand{\PGL}{\textrm{PGL}}
\newcommand{\PSU}{\textrm{PSU}}

\newcommand{\GF}{\textrm{GF}}
\newcommand{\Sz}{\textrm{Sz}}
\newcommand{\GLi}{\textrm{GL}}
\newcommand{\POm}{P\Omega}
\newcommand{\SLi}{\textrm{SL}}
\newcommand{\PSp}{\textrm{PSp}}

\newcommand{\M}{\textrm{M}}

\newcommand{\syl}{\textrm{Syl}}
\newcommand{\Syl}{\textrm{Syl}}

\newcommand{\FO}{\textrm{fix}_{\Omega}}

\newcommand{\Inn}{\textrm{Inn}}

\def \<{\langle }
\def \>{\rangle }
\def \N{\mathbb{N}}

\def \Z{\mathbb{Z}}

\def \C{\mathbb{C}}

\makeatletter 

\begin{document}

\begin{center}
\Large{\textbf{Finite permutation groups that act with fixity $4$}}

\vspace{0.2cm} \small{Barbara Baumeister, Paula H\"ahndel, Kay Magaard, Christoph M\"oller, and Rebecca Waldecker}
\end{center}

\normalsize
\vspace{2cm}

Abstract:

Motivated by the theory of Riemann surfaces and specifically the significance of Weierstrass points, we prove general structure results about finite groups that have a faithful transitive action with fixity $4$. We also explain examples for many different possibilities of such actions. 
\vspace{1cm}

Keywords:

Permutation groups, fixity, fixed points

\vspace{1cm}


\section{Introduction}

In this article, we prove general results about permutation groups that act transitively, faithfully, and with fixity 4.
We were originally motivated by Riemann surfaces and their automorphism groups, but we also found the corresponding questions about permutation groups interesting in their own right. In fact, there is a long history of research on permutation groups that act with constraints. For example, Bender worked on transitive permutation groups where involutions have no fixed point or exactly one fixed point. This led to the notion of strongly embedded subgroups (see \cite{Bender}), and we frequently encounter strongly embedded subgroups in our work. Hering extended Bender's hypothesis to involutions with at most two fixed points (\cite{Hering}), and Ronse, who to our knowledge coined the term "fixity" (see \cite{Ronse1980}), proved results about groups where involutions fix at most 15 points (\cite{Ronse}). 

Throughout this article, we say that a group \textbf{$G$ acts with fixity $k \in \N$ on a set $\Omega$} if and only if
the maximum number of fixed points of elements in $G^\#$ is exactly $k$.
Faithful transitive permutation groups of fixity $1$ are Frobenius groups, where the Frobenius kernel consists of the fixed point free elements and the Frobenius complements are the point stabilisers. In previous work, we have already analysed groups of fixity 2 and 3 in detail (\cite{MW},\cite{HW}, \cite{MW3} and \cite{MW3corr}), as well as finite simple groups that act with fixity 4 (see \cite{BHMSW}). This is the foundation of our general analysis of groups that act with fixity 4.

As usual, we denote by $F(G)$ the \textbf{Fitting subgroup} of $G$, that is the largest nilpotent normal subgroup of $G$. The \textbf{components} of $G$
are the quasi-simple subnormal subgroups of $G$, and the product of all the components of $G$ is the layer $E(G)$. The subgroup $F^*(G) = F(G)E(G)$ is the \textbf{generalized Fitting subgroup} of $G$,  see  for instance \cite[Section~31]{Aschbacher}.
\bigskip

\begin{thm}\label{FinalTheorem}
    Suppose that $G$ is a finite group that acts faithfully, transitively, and with fixity
		$4$
		on a set $\Omega$.
		Let $\alpha \in \Omega$.
	Then one of the following is true, and all cases occur:
	\begin{enumerate}[\quad (1)]
		\item $F(G)\cap G_{\alpha} \neq 1$ and $E(G)=1$. Furthermore, one of the following holds:
		\begin{enumerate}[(a)]
			\item $F(G)$ is a $2$-group with sectional $2$-rank at most $4$ that acts faithfully and with fixity $4$ on $\alpha^{F(G)}$, and the point stabilisers have order dividing $2^3 \cdot 3^2$.
			\item $G$ is one of the groups in the group column in Table~\ref{TableNotFaithful} and $|\Omega|\leq 28$. Moreover, $F(G)$ is either elementary abelian of order $4$, $8$, $9$ or $16$, or it has structure $C_4\times C_4$ or $(C_4 \times C_4) : C_2$.
		\end{enumerate}
			\item $F(G)$ acts semi-regularly on $\Omega$. Furthermore, for all $p\in \pi(G_{\alpha})$, the Sylow $p$-subgroups of $G$ have $p$-rank $1$ or $F(G)=O_2(G) \times O_3(G)$.
			Additionally, we have one of the following possibilities:
			\begin{enumerate}[(a)]
				\item $E(G)=1$.%
                
				\item  $E(G)$ acts semi-regularly on $\Omega$ and $E(G)/Z(E(G))\cong \Alt_5$.
                If $E(G) \cong \Alt_5$, then $G_{\alpha}$ has isomorphism type $C_2$, $E_4$, $C_4$, $\Sym_3$, or $\Alt_4$.
				If $E(G) \cong \SLi_2(5)$, then $G_{\alpha}$ has isomorphism type $C_2$, $E_4$, or $C_4$.%
				
				\item $E(G)$ acts with fixity $2$ on $\alpha^{E(G)}$, $E(G) \cong \PSL_2(q), Sz(q)$ or $\PSL_3(4)$ for some prime power $q$ and one of the following is true:
    
				\begin{enumerate}[(i)]
					\item $F(G) = 1$ and $G$ is almost simple.
					\item $F(G) \cong C_2$ and $G = A \times F(G)$, where $A$ is an almost simple group which acts with fixity $2$ on both of its orbits.
					\item $F(G) \cong C_2$ and $E(G)$ is isomorphic to $\Alt_5$ or to $\PSL_2(7)$. 
				\end{enumerate}

				\item $E(G)$ acts with fixity $3$ on $\alpha^{E(G)}$, $E(G) \cong \Alt_6$, $F(G)=1$, and one of the following holds:
				
				\begin{enumerate}[(i)]
					\item $|\Omega|=6$ and $G\cong \Sym_6$.
					\item $|\Omega|=12$ and $G$ is isomorphic to $\M_{10}$, $\PGL_2(9)$, or $\aut(\Alt_6)$.
				\end{enumerate}%
                
				\item $E(G)$ acts with fixity $4$ on $\alpha^{E(G)}$ and additionally one of the following holds:
				\begin{enumerate}[(i)]
					\item $|F(G)| = 1$ and $E(G)$ is isomorphic to one of the groups appearing in Table~\ref{TableAllFix} with a fixity $4$ action.
					\item $|F(G)| = 2$ and $E(G) \cong \PSL_2(q)$, where $q \geq 7$ is an odd prime power.
					If $q \equiv 1$ modulo $4$, then $E(G) \cap G_\alpha$ is isomorphic to $C_{\frac{q-1}{4}}$ or $E_q : C_{\frac{q-1}{4}}$.
					If $q \equiv -1$ modulo $4$, then $E(G) \cap G_\alpha$ is isomorphic to $C_{\frac{q+1}{4}}$.
					\item $|F(G)| = 2$ and $E(G)$ is isomorphic to $\SLi_2(q)$, where $q$ is an odd prime power, to $C_2.\Sz(8)$, or to $C_2.\PSL_3(4)$.
					\item $|F(G)| = 4$ and $G$ is isomorphic to $\SL_2(5) : C_2$ with $G_\alpha$ isomorphic to $\Sym_3$ or $D_{10}$, or $G$ is isomorphic to $\GL_2(5)$ with $G_\alpha \cong C_5 : C_4$.
				\end{enumerate}
			\end{enumerate}
		\end{enumerate}
	\end{thm}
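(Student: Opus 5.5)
The proof will split according to whether $F(G)\cap G_\alpha$ is trivial, and in the second case according to how $E(G)$ acts on the orbit $\alpha^{E(G)}$. The basic tool is a fixed point count for normal subgroups. Let $N\trianglelefteq G$ and $1\neq x\in N\cap G_\alpha$. The fixed points of $x$ in $\Omega$ are distributed over the $N$-orbits, which are blocks for $G$. Each such $N$-orbit is permuted by $G$, and $N$ acts on it with fixity at most $4$. Conversely, fixity information about $N$ on $\alpha^N$ bounds the structure of $N_G(\langle x\rangle)$ via $|\fix_\Omega(x)|=|N_G(\langle x\rangle)\text{-orbit data}|$, in the standard form $|\fix_\Omega(x)|=|x^G\cap G_\alpha|\cdot|C_G(x)|/|G_\alpha|$-type counting. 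I would first establish the elementary lemmas that are presumably stated earlier in the paper:
\begin{enumerate}
\item a nontrivial normal subgroup intersecting $G_\alpha$ nontrivially acts with fixity at most $4$ on each orbit;
\item if $F(G)\cap G_\alpha\neq 1$, then $E(G)=1$. The argument is that components centralise $F(G)$ and would then have to fix the point sets of elements of $F(G)\cap G_\alpha$, which have size at most $4$, and this forces a component into a subgroup of $\Sym_4$, which is impossible.
\end{enumerate}

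\textbf{Case (1): $F(G)\cap G_\alpha\neq 1$.} Pick $x\in Z(P)\cap G_\alpha$ of prime order $p$ for a suitable Sylow subgroup $P$ of $F(G)$. Then $C_G(x)$ permutes $\fix(x)$, which has at most $4$ points, so $|C_G(x):C_{G_\alpha}(x)|\le 4$. If $p$ is odd, this together with transitivity of $F(G)$-related counting should force $|\Omega|$ small. In that situation I would use the classification of small transitive groups (degree at most $28$) to produce Table~\ref{TableNotFaithful}, that is, case (1)(b). If $F(G)$ is a $2$-group acting with fixity $4$ on $\alpha^{F(G)}$, I would bound its sectional rank through centralisers of involutions fixing at most $4$ points. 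The order $2^3\cdot 3^2$ for the stabiliser comes from $G_\alpha$ acting on the (at most four) points fixed by a central element, which gives a section of $\Sym_4$, plus a faithful action on $F(G)$-sections.

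\textbf{Case (2): $F(G)$ is semiregular.} Here elements of $G_\alpha$ act fixed point freely on $F(G)$ by conjugation, as in a Frobenius-type argument. This gives the rank-$1$ conclusion for Sylow subgroups of primes in $\pi(G_\alpha)$. The exception $F(G)=O_2\times O_3$ arises from coprime action arguments on involutions and $3$-elements with few fixed points. Next I analyse $E(G)$. If $E(G)\cap G_\alpha=1$, a semiregular $E(G)$ whose automizer embeds into the small groups of permutations of fixed-point blocks should force $E(G)/Z(E(G))\cong\Alt_5$, giving case (2)(b), with the stabilisers checked by hand. Otherwise $E(G)$ acts with fixity $1,\dots,4$ on $\alpha^{E(G)}$; fixity $1$ is excluded since $E(G)$ is not Frobenius. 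For fixity $2$ and $3$ I invoke \cite{MW}, \cite{HW}, \cite{MW3}, and for fixity $4$ with simple components I invoke \cite{BHMSW} (Table~\ref{TableAllFix}). I also need to show that $E(G)$ has only one component: two components would give commuting elements, one in a stabiliser, and the other would then have to permute too few fixed points. The remaining configurations with $|F(G)|\in\{2,4\}$ are obtained by examining central extensions $\SL_2(q)$, $2.\Sz(8)$, $2.\PSL_3(4)$, and the direct or central products with $F(G)$. This is done by computing fixed points of elements of $E(G)\cap G_\alpha$ multiplied by central elements.

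The main obstacle I expect is case (1)(a) together with the non-faithful small cases. Bounding the $2$-group $F(G)$ (sectional rank at most $4$) and showing that odd primes force $|\Omega|\le 28$ requires delicate counting of fixed points of elements of $F(G)\cap G_\alpha$ and of their $G$-conjugates. I would try first to show that $|F(G):C_{F(G)}(x)|$ is small using $|\fix(x)|\le4$, and then induct on $|F(G)|$ via $G/\Phi$-type quotients, finishing with computer enumeration of small degrees.
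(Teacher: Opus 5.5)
There are genuine gaps. The most serious is in Case~(1), which you split along the wrong line. The element $x\in Z(P)\cap G_\alpha$ you start from exists exactly when $Z(F(G))\cap G_\alpha\neq 1$, i.e.\ when $F(G)$ is \emph{not} faithful on $\alpha^{F(G)}$. So it is unavailable in Case~(1)(a). In the non-faithful case the parity of $o(x)$ is irrelevant, since $F(G)$ is often a $2$-group there. In Row~1 of Table~\ref{TableNotFaithful} ($\Sym_4$ on $12$ points, $G_\alpha\cong C_2\le F(G)\cong E_4$), the $2$-group $F(G)$ acts on an orbit of length $2$ with kernel $G_\alpha$. Hence your claim that a $2$-group $F(G)$ acts with fixity $4$ on $\alpha^{F(G)}$ fails.

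The missing idea for $|\Omega|\le 28$ is a double count on the abelian group $Z:=Z(F(G))$. As $1\neq Z_\alpha$ fixes $\alpha^Z$ pointwise, $|\alpha^Z|\le 4$. Counting pairs $(\omega,z)\in\Omega\times Z^\#$ with $\omega^z=\omega$ gives $|\Omega|\cdot|Z_\alpha^\#|\le 4|Z^\#|$, while $|Z^\#|/|Z_\alpha^\#|\le 2|\alpha^Z|-1\le 7$.

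In the faithful case three things are still missing. First, you must show that $F(G)$ cannot act with fixity $2$ or $3$ on its orbit. The paper uses that $\aut$ of a dihedral or semidihedral $2$-group of order at least $8$ is a $2$-group, and for fixity $3$ it produces an involution inverting $F(G)$. Second, you need an actual source for sectional $2$-rank at most $4$ (Ronse's theorem, since a $2$-group has no strongly embedded subgroup). Third, your explanation of $|G_\alpha|\mid 2^3\cdot 3^2$ does not work: letting $G_\alpha$ act on the at most four fixed points of one element controls only a section of $\Sym_4$ and says nothing about the kernel. The paper gets $|S_\alpha|\le 8$ for the $2$-part. It excludes primes $\ge 5$ and $3^3$ by a minimal-counterexample induction through a characteristic subgroup of $F(G)$ that contains $Z(F(G))$ and has exactly two or four regular orbits.

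In Case~(2), your assertion that elements of $G_\alpha$ act fixed point freely on $F(G)$ is false. Lemma~\ref{normaliser}(ii) only gives $|C_{F(G)}(x)|\le 4$, and in Example~\ref{ExFinalTheorem2a} this centraliser has order $4$. So no Frobenius-complement argument is available. What works is this: if $X\cong E_{p^2}$ lies in $G_\alpha$, then every prime $r\ge 5$ in $\pi(F(G))$ differs from $p$. Then $O_r(G)=\langle C_{O_r(G)}(x)\mid x\in X^\#\rangle$ with $|C_{O_r(G)}(x)|\le 4<r$ forces $O_r(G)=1$.

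For $E(G)$ the decisive inputs are missing. Suppose a component $E$ acts semiregularly and is normalised by some $x$ of prime order with $|\FO(x)|=4$. Then $C_E(x)$ acts semiregularly on $\FO(x)$, so $|C_E(x)|$ divides $4$, and Fu's theorem forces $o(x)=2$. You then need the fact that $\Alt_5$ is the only non-abelian simple group with an involutory automorphism whose centraliser has order at most $4$. The paper proves this via Brauer--Fowler and a search through primitive groups of degree at most $64$.

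Your argument that there is only one component works only when some component meets a point stabiliser. The case where all components are semiregular needs separate work: one reduces to $G=L_1\ast L_2$ with $L_i\cong\Alt_5$ and $G_\alpha\cong E_4$, where an involution of $G_\alpha$ would have $3\cdot 16/4=12$ fixed points.

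Finally, quoting the fixity $2$ and $3$ classifications only gives candidate lists. The sharp conclusions, such as $E(G)\cong\Alt_6$ with $|\Omega|\in\{6,12\}$ in (2)(d) or $|F(G)|\le 2$ in (2)(c), come from orbit arguments you do not supply:
\begin{enumerate}
\item $F(G)$ is semiregular and centralises elements of $E(G)$ with few fixed points;
\item an involution of the simple group $E(G)$ cannot induce an odd permutation on an $E(G)$-orbit;
\item $|\alpha^{E(G)}|\equiv|\fix_{\alpha^{E(G)}}(a)|$ modulo $o(a)$ for $a$ of prime order;
\item in the non-simple quasi-simple case one must pass to $E(G)/Z(E(G))$ and show that it acts with fixity $2$ with point stabilisers of odd order.
\end{enumerate}
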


Throughout this article we will give more details for some of the cases, for example in Lemma \ref{EtransFix2}, where we extend Case (c)~(iii). The following little table is meant to guide towards examples for specific cases of our main result:

\begin{table}[ht]
	\centering
		\begin{tabular}{l|l ||l|l ||l|l}
Case & Reference &Case & Reference & Case & Reference\\
	\hline \hline
   $ (1) (a)$ & Example~\ref{ExampleFix4}~~ & $(2) (c) (i)$ & Example~\ref{ExFinalTheorem2c}& $(2) (e) (i)$ & \cite{BHMSW}\\
    $ (1) (b)$ & Example~\ref{Ex1(b)}~~ & $(2) (c) (ii)$ & Lemma~\ref{Fix2toFix4}& $(2) (e) (ii)$ & Example~\ref{ExFinalTheorem2eii}\\
   $ (2) (a)$ & Example~\ref{ExFinalTheorem2a}~~ & $(2) (c) (iii)$ & Example~\ref{EtransFix2eg}& $(2) (e) (iii)$ & Example~\ref{2.PSL_2.Sz}, Lemma~\ref{SLquasisimple} \\
     $ (2) (b)$ & Example~\ref{exRegComp}~~ & $(2) (d) $ & Example~\ref{Alt6Fix3}& $(2) (e) (iv)$ & Example~\ref{ExFinalTheorem2eiv} \\
 	\end{tabular}
    	\vspace{0.2cm}
	\caption{Assignment of the examples to the different cases of Theorem~\ref{FinalTheorem}}
	\label{ListOfExamples}
    \end{table}

We introduce some notation for the upcoming tables:
If $n \in \N$ and $p \in \N$ is a prime, then we 
denote the cyclic group of order $n$, the dihedral group of order $n$ or the semi-dihedral group of order $n$ by
$C_n$, $D_n$ or $SD_n$, respectively, and we denote the elementary abelian group of order $p^n$ by $E_{p^n}$. If $A$ and $B$ are groups, then we denote the semi-direct product of $A$ and $B$, where $A$ is normal, by $A:B$. $A \times B$ denote the direct product, $A.B$ and $A \cdot B$ denote unspecified products and $H=A^{\cdot} B$ denotes a non-split extension where $A \unlhd H$ and $G/A \cong B$.

\newpage

\begin{table}
	\centering

	\begin{tabular}{l||l|l|l|l}
	Row&	Small Group ID & group $G$ & $F(G)$ & $G_{\alpha}$\\
		\hline \hline
	1&	$[  24,   12]$&
		$\Sym_4$ & $E_4$ & $C_2$\\ \hline
	2&	$[  24,   13]$
		&$C_2 \times \Alt_4$& $E_8$& $C_2$, $E_4$\\ \hline
	3&	$[  48,    3]$
		&$(C_4 \times C_4) : C_3$ & $C_4 \times C_4$ & $C_4$\\ \hline
	4&	$[  48,   30]$
		&${C_2}^{\cdot}\Sym_4$ & $E_8$ & $C_4$\\ \hline
	5&	$[  48,   48]$
		&$C_2 \times \Sym_4$ & $E_8$& $C_4$, $E_4$, $D_8$\\ \hline
	6&	$[  48,   49]$
		&$E_4 \times \Alt_4$ & $E_{16}$ & $E_4$\\ \hline
	7&	$[  48,   50]$
		&$(E_4\times E_4):C_3$ & $E_{16}$ & $E_4$\\ \hline
	8&	$[  56,   11]$
		&$E_8 : C_7$ & $E_8$ & $C_2$\\ \hline
	9&	$[  72,   39]$
		&$E_9 : C_8$ & $E_9$ & $\Sym_3$\\ \hline
	10&	$[  72,   40]$
		&$\Sym_3 \wr C_2$ & $E_9$ & $D_{12}$\\ \hline
	10&	$[  72,   41]$
		&$E_9 : Q_8$ & $E_9$ & $\Sym_3$\\ \hline
	12&	$[  80,   49]$
		&$E_{16} : C_5$ & $E_{16}$ & $E_4$\\ \hline
	13&	$[  96,   64]$
		&$(C_4\times C_4):\Sym_3$ & $C_4 \times C_4$& $C_8$, $C_4 \times C_2$, $D_8$, $Q_8$\\ \hline
	14&	$[  96,   72]$
		&${E_8}^{\cdot}\Alt_4$ & $(C_4 \times C_4) : C_2$ & $D_8$\\ \hline
	15&	$[  96,  195]$
		&${C_2}^{\cdot} (C_2\times \Sym_4)$ & $E_{16}$ & $D_8$\\ \hline
	16&	$[  96,  227]$
		&$(E_4\times E_4):\Sym_3$ & $E_{16}$ & $C_4\times C_2$, $D_8$, $E_8$, $\Alt_4$\\ \hline
	17&	$[ 144,  182]$
		&$E_9 : SD_{16}$ & $E_9$ & $D_{12}$\\ \hline
	18&	$[ 144,  184]$
		&$\Alt_4 \times \Alt_4$ & $E_{16}$ & $\Alt_4$\\ \hline
	19&	$[ 160,  234]$
		&$E_{16}:D_{10}$ & $E_{16}$ & $C_4 \times C_2$, $E_8$\\ \hline
	20&	$[ 168,   43]$
		&$E_8 : (C_7 : C_3)$ & $E_8$ & $C_6$\\ \hline
	21&	$[ 192,  956]$
		&${E_8}^{\cdot}\Sym_4$
		 & $(C_4 \times C_4) : C_2$ & $D_{16}$, $SD_{16}$\\ \hline
	22&	$[ 216,  153]$
		&$E_9:\SLi_2(3)$ %
		& $E_9$& $C_3 \times \Sym_3$\\ \hline
	23&	$[ 288, 1025]$
		&$\Alt_4\wr C_2$ &$E_{16}$ & $C_2 \times \Alt_4$\\ \hline
	24&	$[ 288, 1026]$
		&$(\Alt_4 \times \Alt_4) : C_2$ & $E_{16}$ & $\Sym_4$\\ \hline
	25&	$[ 320, 1635]$
		& $E_{16}:(C_5:C_4)$ & $E_{16}$ & %
		$E_4:C_4$, $C_8:C_2$\\ \hline
	26&	$[ 432,  734]$
		& $E_9:\GLi_2(3)$
		& $E_9$ & $\Sym_3 \times \Sym_3$\\ \hline
	27&	$[ 960,11357]$
		&$(E_{4}\times E_4):\SLi_2(4)$ & $E_{16}$ & $(C_4 \times C_4) : C_3$,\\ %
		&&&& $(E_4\times E_4): C_3$\\ \hline
	28&	$[1344,  814]$
		&${E_8}^{\cdot} \PSL_3(2)$ & $E_8$ & $\GLi_2(3)$  
	\end{tabular}
	\vspace{0.2cm}
	\caption{Groups acting transitively, faithfully, and with fixity $4$, and such that $F(G)$ does not act faithfully on $\alpha^{F(G)}$}
	\label{TableNotFaithful}
\end{table}

\newpage
\begin{table}
	\centering

\begin{tabular}{c||c|c|c}
& \multicolumn{3}{c}{Point stabiliser structure} \\
Group $G$ & Fixity $2$ & Fixity $3$ & Fixity $4$ \\ \hline \hline
$\Alt_5$ & $C_2, C_3, C_5,$ & $E_4$ &  \\
& $\Sym_3, D_{10}, \Alt_4$ & & \\ \hline
$\Alt_6 \cong \PSL_2(9)$ & $C_4, C_5, E_9:C_4$ & $\Sym_4, \Alt_5$ & $C_2, \Sym_3, E_9,$\\
& & & $D_{10}, E_9:C_2$ \\ \hline
$\Alt_7$ &  & $C_7, \PSL_2(7)$ & $C_5, \Alt_6$ \\ \hline
$\Alt_8$ &  & $C_7$ &  \\ \hline
$\PSL_2(7) \cong \PSL_3(2)$ & $C_3, C_4, C_7:C_3, \Alt_4$ & $\Sym_4$ & $C_2, \Sym_3$ \\ \hline
$\PSL_2(8)$ & $C_7, C_9, E_8:C_7$ &  & $C_2, \Sym_3, D_{14}, D_{18}$ \\ \hline
$\PSL_2(11)$ & $C_5, C_6, C_{11}:C_5$ & $\Alt_5$ & $C_3, \Alt_4$ \\ \hline
$\PSL_2(13)$ & $C_6, C_7, C_{13}:C_6$ &  & $C_3, \Alt_4, C_{13} : C_3$ \\ \hline
$\PSL_2(q), q \geq 16, q \equiv 1$ mod $4$ & $C_{\frac{q-1}{2}}, C_{\frac{q+1}{2}}, E_q:C_{\frac{q-1}{2}}$ &  & $C_{\frac{q-1}{4}}, E_q:C_{\frac{q-1}{4}}$ \\ \hline
$\PSL_2(q), q \geq 16, q \equiv -1$ mod $4$ & $C_{\frac{q-1}{2}}, C_{\frac{q+1}{2}}, E_q:C_{\frac{q-1}{2}}$ &  & $C_{\frac{q+1}{4}}$ \\ \hline
$\PSL_2(q), q \geq 16, q \equiv 0$ mod $2$ & $C_{q-1}, C_{q+1},E_q:C_{q-1}$ &  &  \\ \hline
$\PSL_3(4)$ & $C_5$ & $C_7$ &  \\ \hline
$\PSL_3(q), q \geq 3$ &  & $C_{\frac{q^2+q+1}{(3,q-1)}}$ &  \\ \hline
$\PSL_4(3)$ &  & $C_{13}$ &  \\ \hline
$\PSL_4(5)$ &  & $C_{31}$ &  \\ \hline
$\PSU_3(3)$ &  & $C_7$ & $((C_3 \times C_3) : C_3 ) : C_8$ \\ \hline
$\PSU_3(q), q > 3$ &  & $C_{\frac{q^2-q+1}{(3,q+1)}}$ &  \\ \hline
$\PSU_4(3)$ &  & $C_7$ & $C_5$ \\ \hline
$\PSp_4(q), q \geq 3$ &  &  & $C_{\frac{q^2 + 1}{(2,q+1)}}$ \\ \hline
$\Sz(q)$ & Sylow $2$ normaliser or $C_{q-1}$ &  & $C_{q + \sqrt{2q} + 1}$, $C_{q - \sqrt{2q} + 1}$ \\ \hline
$\POm_8^-(q)$ &  &  & $C_{\frac{q^4 + 1}{(2,q+1)}}$ \\ \hline
$^3D_4(q)$ &  &  & $C_{q^4-q^2+1}$ \\ \hline
$^2G_2(q)$ &  &  & order $q^3 \cdot \frac{q-1}{2}$ or $\frac{q-1}{2}$ \\ \hline
$M_{11}$ &  & $\Alt_6~^{\cdot}C_2$ & $C_5, C_{11}:C_5, \PSL_2(11)$ \\ \hline
$M_{12}$ &  &  & $M_{11}$ \\ \hline
$M_{22}$ &  & $C_7$ & $C_5, C_{11}:C_5$ \\ \hline
$J_1$ &  &  & $C_{15}$

\end{tabular}
\vspace{0.2cm}
	\caption{Finite simple groups acting transitively, faithfully, and
		with fixity $2$, $3$, or $4$.
        The first column shows the simple group $G$. The second, third and fourth columns list the structures of possible point stabilisers for fixity $2$, $3$ and $4$ actions, respectively.
        } \label{TableAllFix}
\end{table}

After collecting some preliminary results in Section 2, we will discuss examples in Section 3. Then we turn to the analysis of the Fitting subgroup in Section 4 and, similarly, we investigate the layer and the possibilities for components in Section 5. This is where most of the work towards our main results happens. 
After that, we collect everything and prove the theorem.


\newpage
\section{Preliminaries}

In this article, all groups are supposed to be finite. We use standard notation for orbits and point stabilisers, and whenever $\Omega$ is a set and $X$ is a group, or $x$ is a group element, then we denote the fixed point set of $X$, or $x$, on $\Omega$ by $\FO(X)$ or $\FO(x)$, respectively.

Additional notation will be introduced when we need it. 
\newpage

\subsection{Some basic properties of groups of fixity $k$.}

\begin{lemma}\label{normaliser}

Suppose that $G$ is a finite transitive permutation
group with permutation domain $\Omega$. Suppose further that
 $G$ acts with fixity~$k \in \N$ on $\Omega$ and let $\alpha \in \Omega$.

(i) If $1 \neq X \le G_{\alpha}$, then $|N_G(X):N_{G_\alpha}(X)| \le k$.
In particular, if
$\FO(X)=\{\alpha\}$, then
$N_G(X) \le G_\alpha$.

(ii) If $x \in G_{\alpha}^\#$, then $|C_G(x):C_{G_\alpha}(x)| \le k$. In particular, if
$\FO(x)=\{\alpha\}$, then
$C_G(x) \le G_\alpha$.

(iii) If $k=2$, then $G$ has even order.

(iv) $|Z(G)|$ divides $k$.

(v) If $p \in \pi(G)$ and $p>k$, then either $G_\alpha$ is a $p'$-group or it contains a Sylow $p$-subgroup of $G$.

(vi) $Z(G)$ acts semi-regularly on $\Omega$.

(vii) If $k=4$ and $G_\alpha$ has order coprime to $6$, then $G_\alpha$ is a Frobenius group or it is a four point stabiliser.
\end{lemma}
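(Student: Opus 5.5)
For (i) and (ii) I will use the standard normaliser argument. If $1 \neq X \le G_\alpha$ and $g \in N_G(X)$, then $X = X^g \le G_{\alpha^g}$, so $\alpha^g \in \FO(X)$. Thus the $N_G(X)$-orbit of $\alpha$ lies inside $\FO(X)$. The group $X$ is nontrivial and the action is faithful, so $|\FO(X)| \le k$. The orbit has length $|N_G(X):N_{G_\alpha}(X)|$, which gives (i). If $\FO(X)=\{\alpha\}$, the orbit is $\{\alpha\}$, so $N_G(X)\le G_\alpha$. Part (ii) follows by applying (i) to $X=\langle x\rangle$, because $C_G(x)\le N_G(\langle x\rangle)$ and $C_{G_\alpha}(x) = C_G(x)\cap G_\alpha$.

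For (vi), note that $Z(G)\cap G_\beta$ is normal in $G$ and fixes $\beta$. By transitivity it fixes every point, so faithfulness gives $Z(G)\cap G_\beta=1$ for all $\beta$. For (iv), choose $x\in G^\#$ with exactly $k$ fixed points. Then $Z(G)\le C_G(x)$ leaves $\FO(x)$ invariant, and by (vi) it acts semi-regularly on it. Every orbit therefore has length $|Z(G)|$, and $|Z(G)|$ divides $k$. For (v), suppose $p>k$ and $p$ divides $|G_\alpha|$, but $G_\alpha$ contains no Sylow $p$-subgroup of $G$. Let $P\in\Syl_p(G_\alpha)$ and $P<Q\in \Syl_p(G)$. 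Then $N_Q(P)>P$, and $N_Q(P)\cap G_\alpha = P$ by maximality of $P$ in $G_\alpha$. So $|N_G(P):N_{G_\alpha}(P)|$ is divisible by the $p$-power $|N_Q(P):P|\ge p>k$, which contradicts (i).

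For (iii), let $k=2$ and assume for a contradiction that $|G|$ is odd. I will choose a nontrivial two-point stabiliser $D=G_\alpha\cap G_\beta$; since fixity is $2$, $\FO(D)=\{\alpha,\beta\}$. By (i), every orbit of $N_G(D)$ on $\FO(D)$ has length $1$ or $2$ and this length divides $|G|$, so it must be $1$. I then want to produce an element of $N_G(D)$ that swaps $\alpha$ and $\beta$, which gives the contradiction. Here I would use transitivity to find $g$ with $\alpha^g=\beta$. The step I must make work is to show that $D^g$ and $D$ are conjugate inside $G_\beta$, for example via a Sylow/Frattini argument applied to a suitable characteristic subgroup of $D$. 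If that fails, the fallback is a parity count: odd-order groups have no nontrivial self-paired orbitals, and I would count fixed points via Burnside's lemma. I expect (iii) to be the main obstacle, since it is the only part that is not a one-line normaliser argument.

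For (vii), let $k=4$ and $\gcd(|G_\alpha|,6)=1$. If no element of $G_\alpha^\#$ fixes a point other than $\alpha$, then $G_\alpha$ is a four-point stabiliser in the degenerate sense, or I handle this case directly. Otherwise, pick $\beta\neq\alpha$ with $K=G_\alpha\cap G_\beta\neq1$ maximal among such intersections. Then $N_{G_\alpha}(K)$ acts on $\FO(K)\setminus\{\alpha\}$, a set of size at most $3$. The orbit lengths divide $|G_\alpha|$, which is coprime to $6$, so they are all $1$; hence $N_{G_\alpha}(K)$ fixes $\FO(K)$ pointwise, and by maximality $N_{G_\alpha}(K)=K$. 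From this I would show that $K\cap K^h=1$ for all $h\in G_\alpha\setminus K$, which makes $G_\alpha$ a Frobenius group with complement $K$. The alternative outcome is that $\FO(K)$ has exactly four points and $G_\alpha$ is a four-point stabiliser.
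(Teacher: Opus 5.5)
Your arguments for (i), (ii), (iv), (v) and (vi) are correct. The paper cites \cite{HW} for (i)--(v), argues (vi) exactly as you do, and cites \cite{BHMSW} for (vii). The first gap is in (iii). The step you want to ``make work'' is not a reduction. If $D^h=D^g$ with $h\in G_\beta$, comparing fixed point sets gives $\alpha^h=\beta^g$, i.e.\ $(\beta,\alpha)^h=(\alpha,\beta)^g$. So $D$ and $D^g$ are $G_\beta$-conjugate precisely when the orbital $\Gamma$ of $(\alpha,\beta)$ is self-paired. That is the very contradiction you are after, and you give no argument for it. The Burnside fallback is not carried out, and on its own it only yields $\sum_{\gamma\neq\alpha}|G_{\alpha\gamma}|=(r-1)|G_\alpha|$ ($r$ the rank), which holds for every transitive action. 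What is missing is a trivial-intersection count inside $G_\alpha$. Fixity $2$ gives $G_{\alpha\gamma}\cap G_{\alpha\delta}=1$ for distinct $\gamma,\delta\neq\alpha$. As $|G|$ is odd, $\Gamma\neq\Gamma^*$, so $\Gamma(\alpha)=\beta^{G_\alpha}$ and $\Gamma^*(\alpha)$ are disjoint suborbits, each of length $|G_\alpha:D|$. Moreover, every $G_{\alpha\gamma}$ with $\gamma$ in one of them is $G$-conjugate to $D$. Counting non-identity elements gives $2\,|G_\alpha:D|\,(|D|-1)\le|G_\alpha|-1$. But the left side equals $|G_\alpha|\,(2-2/|D|)\ge|G_\alpha|$, a contradiction.

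In (vii) the choice of $K$ is the problem. If $G_\alpha$ fixes some $\beta\neq\alpha$, the maximal two-point stabiliser is $K=G_\alpha$ itself. When $|\FO(G_\alpha)|\in\{2,3\}$ your argument then reaches neither conclusion: $K=G_\alpha$ cannot be a Frobenius complement of $G_\alpha$, and $\FO(K)$ does not have four points. (In that situation $G_\alpha$ is in fact Frobenius, but with a different complement.) Moreover, $K\cap K^h=1$ does not follow from $N_{G_\alpha}(K)=K$ when $|\FO(K)|\le 3$. An element of $K\cap K^h$ only has to fix $\FO(K)\cup\{\beta^h\}$, which has just $|\FO(K)|+1\le 4$ points. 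Maximality of $K$ among two-point stabilisers says nothing about such three-point stabilisers. (Your ``degenerate'' case is vacuous: by transitivity some element of $G_\alpha^\#$ has four fixed points.) The fix is to choose $x\in G_\alpha^\#$ with $|\FO(x)|=4$, put $T:=\FO(x)$, and let $A$ be the pointwise stabiliser of $T$. Then $\FO(A)=T$, and your orbit-length argument on $T\setminus\{\alpha\}$ gives $N_{G_\alpha}(A)=A$. For $h\in G_\alpha\setminus A$ this forces $T^h\neq T$, so $A\cap A^h$ fixes at least five points and is trivial. Hence either $G_\alpha=A$ is a four point stabiliser, or $G_\alpha$ is a Frobenius group with complement $A$.
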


\begin{proof}
This is mainly Lemma 2.2 in \cite{HW}. Statement (vi) follows from the fact that all subgroups of $Z(G)$ are normal in $G$ and that $G$ acts faithfully and transitively on $\Omega$ by hypothesis, which implies that $Z(G)$ intersects all point stabilisers trivially.
Finally, (vii) is Lemma 8.2~(d) of \cite{BHMSW}.
\end{proof}

\begin{lemma}\label{numberfpt}
Suppose that $G$ is a finite group, that $1\neq U\lneq G$ and that $x\in G^\#$.
Then, in the action of $G$ on the coset space $G/U$ by right multiplication, the number of fixed points of
$x$ is exactly
\[\frac{|\{\langle x \rangle^g \leq U\mid g\in G\}|\cdot |N_G(\langle x \rangle)|}{|U|} = \frac{|x^G \cap U| \cdot |C_G(x)|}{|U|} \,.\]
\end{lemma}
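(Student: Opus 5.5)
The plan is to identify fixed points of $x$ on the right coset space with cosets $Ug$ such that $Ugx = Ug$. Equivalently, $gxg^{-1} \in U$, that is, $x^{g^{-1}} \in U$. The set of $g \in G$ with $x^{g^{-1}} \in U$ is a union of right cosets $Ug$: if $u \in U$, then $x^{(ug)^{-1}} = (x^{g^{-1}})^{u^{-1}}$, and this lies in $U$ exactly when $x^{g^{-1}}$ does. So the number of fixed points is
\[
\frac{|\{g \in G \mid x^{g^{-1}} \in U\}|}{|U|}.
\]
It remains to count the set in the numerator, which we do in two ways.

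\emph{First count (elements).} Consider the map $g \mapsto x^{g^{-1}}$ from $G$ onto $x^G$. Each fibre is a coset of $C_G(x)$, so each conjugate is hit exactly $|C_G(x)|$ times. Restricting to those $g$ whose image lies in $U$, the numerator equals $|x^G \cap U| \cdot |C_G(x)|$. This gives the right-hand expression in the statement.

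\emph{Second count (cyclic subgroups).} The condition $x^{g^{-1}} \in U$ is equivalent to $\langle x \rangle^{g^{-1}} \le U$. The map $g \mapsto \langle x \rangle^{g^{-1}}$ from $G$ onto the conjugacy class of $\langle x \rangle$ has fibres that are cosets of $N_G(\langle x \rangle)$. Hence the numerator also equals $|\{\langle x \rangle^g \le U \mid g \in G\}| \cdot |N_G(\langle x \rangle)|$. Here the set is understood as the set of distinct conjugates of $\langle x \rangle$ contained in $U$, and $g \mapsto g^{-1}$ is a bijection of $G$, so it does not matter whether we write $g$ or $g^{-1}$. This gives the left-hand expression, and the two counts agree.

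There is no real obstacle. The only point needing care is the convention for the action and for conjugation: with right multiplication and $x^g = g^{-1}xg$, the fixed-point condition reads $gxg^{-1} \in U$. We also note that the hypotheses $U \ne 1$, $U \ne G$ and $x \ne 1$ play no role in the computation; they only exclude degenerate cases.
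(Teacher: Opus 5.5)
Your proof is correct. The core computation is the same as the paper's: count the set $\{g \in G \mid x^g \in U\}$ (equivalently $x^{g^{-1}} \in U$), divide by $|U|$, and split according to which conjugate of $x$ lands in $U$, each contributing $|C_G(x)|$ elements.

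The difference lies in how the two endpoints are justified. The paper gets the formula $|\{g \in G \mid x^g \in U\}|/|U|$ from character theory: the permutation character is the induced principal character $1_U^G$. It also does not prove the first equality; it cites Lemma 4.2 of \cite{BHMSW}.

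You instead derive the counting formula directly from the coset action, by observing that the relevant set of $g$ is a union of cosets $Ug$. You then obtain the left-hand expression by the same fibre argument, with $N_G(\langle x \rangle)$ in place of $C_G(x)$. This makes your argument elementary, self-contained, and free of character theory, and it proves both equalities in one stroke. The paper's version is shorter only because it leans on those two references.
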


\begin{proof}
The first part is Lemma 4.2 in \cite{BHMSW}. To obtain the second part, we note that the number of fixed points can be obtained as values of the corresponding permutation character. This character comes from inducing the principle character of $U$ to $G$ (see Lemma 5.14 in \cite{Isaacs}). Thus, we obtain
\[
    |\FO(x)| = \frac{ | \{ g \in G \mid x^g \in U \} | }{|U|} = \frac{ \sum_{y \in x^G \cap U}  |C_G(y)| }{|U|} = \frac{|x^G \cap U| \cdot |C_G(x)|}{|U|},
\]
which is the second part of the statement.

\end{proof}

\begin{rem}\label{CompInd}

\begin{enumerate}
    \item[(a)]
    Suppose that $E$ is a quasi-simple group. Then $E$ does not contain any proper subgroup of index at most $4$: Assume otherwise and let $H$ be such a subgroup. Then the action of $E$ on $E/H$ by right multiplication leads to a proper quotient of $E$ that is isomorphic to a non-trivial subgroup of the soluble group $\Sym_4$. This contradicts the fact that $E$ is quasi-simple. The same applies for central products of quasi-simple groups. In particular, if $G$ is a finite group, then $E(G)$ does not contain a proper subgroup of index at most $4$.

	If $G$ acts transitively and faithfully with fixity $4$ on a set $\Omega$, and if $\alpha \in \Omega$ and $E := E(G) \neq 1$, then $E \neq E_\alpha$. This is because the action is faithful and the previous paragraph implies that $\alpha^E = |E:E_\alpha| \geq 5$. In particular, since every non-trivial element in $G$ has at most four fixed points, the action of $E$ on $\alpha^E$ is faithful.

\item[(b)]	If a group $G$ acts transitively and faithfully with fixity $1$ on a set $\Omega$, then $G$ is a Frobenius group. That means that there exists a proper non-trivial normal subgroup $K \trianglelefteq G$ and that, for all $x \in G \setminus K$, we have that $C_K(x) = 1$ \cite[8.1.12]{KS}. In particular, $Z(G) = 1$.
	If, in addition, $G$ is quasi-simple, then $Z(G) = 1$ forces $G$ to be simple, which contradicts the existence of $K$.
\end{enumerate}
\end{rem}

\begin{lemma}\label{centrereduction_pre}
Let $G$ be a finite group and suppose that $G$ acts transitively and faithfully with fixity $k \leq 4$ on a set $\Omega$. Let $Z \leq Z(G)$ and suppose that $|\Omega| \leq k \cdot |Z|$. Then $G$ is soluble.
\end{lemma}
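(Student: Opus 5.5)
We argue by contradiction: assume $G$ is insoluble, so $G$ has a non-abelian composition factor. The plan is to show that, under $|\Omega|\le k|Z|$, the group $G/Z$ can be embedded in a symmetric group of degree at most $4$, or at least has a small soluble structure. This forces $G$ to be soluble, since $Z$ is central and hence abelian.

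\textbf{Step 1: $Z$ is regular-like.} By Lemma~\ref{normaliser}~(vi), $Z$ acts semi-regularly on $\Omega$. So every $Z$-orbit has length exactly $|Z|$, and $\Omega$ splits into $m=|\Omega|/|Z|\le k\le 4$ orbits of $Z$. Since $Z\le Z(G)\trianglelefteq G$, the set of $Z$-orbits is a block system for $G$. Thus $G$ acts on the set of $Z$-orbits, a set of size $m\le 4$, and the image lies in $\Sym_m\le\Sym_4$, which is soluble.

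\textbf{Step 2: the kernel.} Let $K$ be the kernel of $G$ on the $Z$-orbits. Then $G/K$ is soluble, and $Z\le K$. We claim that $K/Z$ is soluble, indeed abelian. Fix a $Z$-orbit $\Delta=\alpha^Z$, which $K$ stabilises.

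\begin{itemize}
\item Since $Z$ is central and regular on $\Delta$, the permutation group induced by $K$ on $\Delta$ centralises the regular abelian group $Z^\Delta$.
\item The centraliser of a regular abelian group in the symmetric group is the group itself. Hence $K^\Delta=Z^\Delta\cong Z$.
\item Therefore $K=Z\cdot K_{(\Delta)}$, where $K_{(\Delta)}$ is the pointwise stabiliser of $\Delta$ in $K$.
\end{itemize}

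Doing this for each of the $m$ orbits $\Delta_1,\dots,\Delta_m$, we get that $K$ embeds into $\prod_i K^{\Delta_i}$, since the action is faithful. Each factor $K^{\Delta_i}$ is abelian (isomorphic to a quotient of $Z$). So $K$ is abelian, and in particular soluble.

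\textbf{Step 3: conclusion.} $G$ is an extension of the abelian group $K$ by the soluble group $G/K\lesssim\Sym_4$, so $G$ is soluble.

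Notice that this argument did not really use the fixity hypothesis beyond the semi-regularity of $Z$ from Lemma~\ref{normaliser}~(vi). That semi-regularity only needs faithfulness and transitivity.

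\textbf{Main obstacle.} The main point to check is Step 2: the precise claim that the centraliser of a regular abelian group in $\Sym(\Delta)$ is that group. The centraliser of a regular group is the regular right representation, which is semiregular. Because the centraliser is semiregular and contains $Z^\Delta$, which is already transitive on $\Delta$, it has order at most $|\Delta|=|Z^\Delta|$, so it equals $Z^\Delta$. I would state this explicitly, referring to a standard text such as \cite{KS}.
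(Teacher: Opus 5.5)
Your proof is correct. Step~1 is the same as in the paper, but you handle the kernel $K$ of the action on the $Z$-orbits by a genuinely different argument.

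\textbf{The paper's route.} It uses Lemma~\ref{normaliser}~(iv) to get $|Z| \le 4$. An element of $K$ of prime order $p \geq 5$ must then act trivially on every $Z$-orbit, since each orbit has length $|Z| < p$. Faithfulness therefore forces $K$ to be a $\{2,3\}$-group, and the paper concludes with Burnside's $p^aq^b$-theorem.

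\textbf{Your route.} You observe that $K^{\Delta}$ centralises the regular abelian group $Z^{\Delta}$ in $\Sym(\Delta)$, and so coincides with it. Hence $K$ embeds in $\prod_i Z^{\Delta_i}$ and is abelian.

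\textbf{Comparison.} Your argument is more elementary: it needs neither Burnside's theorem nor the divisibility $|Z| \mid k$. It also yields a stronger conclusion, namely that $K$ is abelian, so $G$ is abelian-by-(a subgroup of $\Sym_4$). Moreover, it makes clear that the only input from the fixity hypothesis is that there are at most four $Z$-orbits. The statement therefore holds for any faithful transitive group with a central subgroup that has at most four orbits. The paper's version is simply shorter to write down with the tools it already has.

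\textbf{Minor points.} The opening ``argue by contradiction'' is superfluous, since your argument is direct. The remark about the ``regular right representation'' is loose about left versus right, but this is harmless: the semiregularity argument you give right after it is self-contained.
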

\begin{proof}
Let $\bar{\Omega}$ denote the set of $Z$-orbits on $\Omega$. By Lemma \ref{normaliser} (iv) and (vi), $|Z|$ divides $k$ and $Z$ acts semi-regularly on $\Omega$. Since $Z$ is normal in $G$, we may look at the action of $G$ on $\bar{\Omega}$. Let $K$ denote the kernel of this action.

Since $|\bar{\Omega}| \leq k \leq 4$, $G/K$ is isomorphic to a subgroup of $\Sym_4$, and hence it is soluble. Let $x \in K$ be of prime order $p$. If $p \geq 5$, then $p > |Z|$ and $x$ fixes every $Z$-orbit pointwise, in contradiction to the faithful action of $G$. Therefore, $K$ is a $\{2,3\}$-group and hence soluble. It follows that $G$ is soluble.
\end{proof}

\begin{lemma}\label{centrereduction}
Let $k \in \N$ and suppose that the finite group $G$ acts transitively and with fixity $k$

on a set $\Omega$. Let $Z \le Z(G)$ and suppose that $G$ is not soluble or that $|\Omega| >k \cdot |Z|$.

Then $\bar{G} := G/Z$ acts transitively and non-regularly on the set $\bar{\Omega}$ of $Z$-orbits. Moreover, if we let $\omega \in \Omega$ and $\bar{\omega} := \omega^Z$, then the following holds:
\begin{enumerate}[\quad (i)]
\item $|G_\omega| = |\bar{G}_{\bar{\omega}}|$ and the full preimage of $\bar{G}_{\bar{\omega}}$ in $G$ is the direct product $Z \times G_\omega$.
\item $\bar{G}$ acts faithfully and with fixity at most $k$ on $\bar{\Omega}$.
\item If $k \leq 4$ and $H \leq G_\omega$ is such that $|\fix_{\bar{\Omega}}(\bar{H})| > k/|Z|$, then $|\bar{H}|$ divides $k$.
\end{enumerate}
\end{lemma}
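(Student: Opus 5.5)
We work throughout with the setup of Lemma \ref{normaliser}: by parts (iv) and (vi), $|Z|$ divides $k$ and $Z$ acts semi-regularly on $\Omega$. So every $Z$-orbit has exactly $|Z|$ points, and $|\bar\Omega| = |\Omega|/|Z|$. Transitivity of $\bar G$ on $\bar\Omega$ is clear, since $Z$ is normal and $G$ is transitive.

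\textbf{Non-regularity.} Suppose $\bar G$ acted regularly on $\bar\Omega$. Then the stabiliser of $\bar\omega$ in $G$ would be $Z$, so $G_\omega \le Z$, and semi-regularity of $Z$ gives $G_\omega = 1$. Then $G$ acts regularly on $\Omega$, and every nontrivial element has no fixed point. This contradicts fixity $k \ge 1$. (The hypothesis "$G$ non-soluble or $|\Omega|>k|Z|$" is not needed for this step.)

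\textbf{Part (i).} Let $S$ be the setwise stabiliser of $\omega^Z$ in $G$, i.e. the full preimage of $\bar G_{\bar\omega}$. Since $Z$ acts regularly on $\omega^Z$, the Frattini argument gives $S = Z G_\omega$. We have $Z \cap G_\omega = 1$ by semi-regularity, and $Z$ is central, so $S = Z \times G_\omega$. Hence $|\bar G_{\bar\omega}| = |S/Z| = |G_\omega|$.

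\textbf{Part (ii), faithfulness.} Let $K$ be the kernel of the action on $\bar\Omega$; note $Z \le K$. We use the hypothesis here.
\begin{itemize}
\item First suppose $|\Omega| > k|Z|$. Take $x \in K$. Then $x$ stabilises every $Z$-orbit, so on each orbit $\omega^Z$ it agrees with some element of $Z \times G_\omega$. Hence $xz^{-1}$ fixes $\omega$ for some $z \in Z$. The set $\Omega$ is partitioned among the finitely many $z \in Z$ according to which $z$ works, so some $z$ works for at least $|\Omega|/|Z| > k$ points. Then $xz^{-1}$ fixes more than $k$ points, so $xz^{-1} = 1$ and $x \in Z$.
\item If instead $G$ is non-soluble, we may assume $|\Omega| \le k|Z|$. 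Then Lemma \ref{centrereduction_pre} applies and shows $G$ is soluble, a contradiction.
\end{itemize}
In both cases $K = Z$, so $\bar G$ acts faithfully on $\bar\Omega$.

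\textbf{Part (ii), fixity bound.} Let $1 \ne \bar x$ fix $\bar\omega$. By (i) we can choose the preimage $x \in G_\omega \setminus\{1\}$. Let $\bar\beta$ be a fixed point of $\bar x$. Then $x$ maps $\beta$ to $\beta z$ for some $z$. Since $x$ has finite order and $Z$ is central, $x^n = 1$ forces $z^n = 1$; we only need fixed points counted this way. The key observation is that $x$ acts on the $Z$-orbit $\beta^Z$ as a translation by $z$ (because $Z$ is central). So either $z = 1$ and $x$ fixes all $|Z|$ points of $\beta^Z$, or $x$ fixes none of them.

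Now use the orbit $\omega^Z$ itself. Here $x$ fixes $\omega$, so $z = 1$ and $x$ fixes all of $\omega^Z$. This gives $|Z|$ fixed points, so $|Z| \le k$.

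To bound the number of fixed blocks, argue via translations by $z$: the element $xz^{-1}$ fixes $\beta^Z$ pointwise. Hence the fixed blocks of $\bar x$ are partitioned according to $z \in Z$. Each class $z$ yields $|Z|$ times (number of blocks in the class) fixed points of the element $xz^{-1} \ne 1$. It is nontrivial because $x \notin Z$, by faithfulness. Therefore each class contains at most $k/|Z|$ blocks, and the total number of fixed blocks is at most $|Z| \cdot k/|Z| = k$.

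\textbf{Part (iii).} Suppose $|\fix_{\bar\Omega}(\bar H)| > k/|Z|$. The same partition argument, applied to the whole group $\bar H$ rather than a single element, gives a homomorphism $H \to Z$ on each fixed block, namely $h \mapsto z_h$ with $xz^{-1}$ fixing that block. On the block $\omega^Z$ this homomorphism is trivial. If it were trivial on every fixed block, $H$ would fix more than $k$ points, so $H = 1$.

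Hence some fixed block carries a nontrivial homomorphism $H \to Z$. This is the main obstacle: we must show that $|\bar H|$ divides $k$. The idea is to get a fixed-point-bounded kernel. The kernels of these homomorphisms are subgroups of $H$ fixing $|Z|$ times (number of blocks with that kernel) points. Combining this with $k \le 4$ and $|Z| \in \{2,4\}$ should force $H$ to embed in $Z$ or in $Z \times Z$ with the order dividing $k$. I would finish this by a short case check over $|Z| \in \{2,4\}$.
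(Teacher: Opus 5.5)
Your arguments for transitivity, non-regularity, (i) and (ii) are correct. They are also more self-contained than the paper's proof, which imports non-regularity, the fixity bound and $|\bar G_{\bar\omega}|=|G_\omega|$ from Lemma~3.7 of \cite{HW}. You obtain all of these directly from the observation that an element of $G_\omega$ stabilising a $Z$-orbit acts on it as a translation by an element of $Z$.

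The genuine gap is part (iii). You introduce the right objects, namely the homomorphisms $\phi_B\colon H\to Z$, $h\mapsto z_h$, attached to the blocks $B$ fixed by $\bar H$. But you then stop at ``should force'' and a case check that is never carried out. The announced split over $|Z|\in\{2,4\}$ also forgets $k=|Z|=3$. The missing step is short:
\begin{enumerate}
\item Put $m:=\lfloor k/|Z|\rfloor$. Since $\omega^Z$ is one of the $\ell>k/|Z|$ fixed blocks, there are fixed blocks $B_1,\dots,B_m$ different from $\omega^Z$.
\item Every element of $H\le G_\omega$ fixes $\omega^Z$ pointwise. Hence $\bigcap_{i=1}^m\ker\phi_{B_i}$ fixes $\omega^Z\cup B_1\cup\dots\cup B_m$ pointwise, which is $(m+1)|Z|>k$ points. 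So this intersection is trivial and $H$ embeds into $Z^m$.
\item As $|Z|$ divides $k\le 4$, there are three cases: $|Z|=1$ and $H=1$; or $|Z|=k$ and $H$ embeds into $Z$; or $k=4$, $|Z|=2$ and $H$ embeds into $C_2\times C_2$.
\item Since $H\cap Z=1$, we have $|\bar H|=|H|$, which divides $k$ in every case.
\end{enumerate}
The paper does essentially this in different clothing. Its kernels $K_i\le ZH$ of index $|Z|$ are exactly $\{\phi_{B_i}(h)^{-1}h\mid h\in H\}$. It first shows that elements of prime order coprime to $|Z|$ lie in every $K_i$, so $H$ is a $p$-group. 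It then bounds $|H|$ via the orders of $K_1\cap K_2$ or $K_1\cap K_2\cap K_3$.

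A smaller point, which your proof shares with the paper's: in the non-soluble branch of the faithfulness argument you invoke Lemma~\ref{centrereduction_pre}, but that lemma assumes $k\le 4$. For larger $k$ this branch of the statement is actually false. For instance, $G=C_2\wr\Alt_5$ acting imprimitively on $10$ points has fixity $8$, attained by base elements with a single non-trivial coordinate. Here $Z=Z(G)$ has order $2$ and $|\Omega|=10\le k|Z|$, yet the whole base group acts trivially on the five $Z$-orbits. So, as in the paper, what you actually establish is the lemma under $|\Omega|>k|Z|$, or for non-soluble $G$ when $k\le 4$. That is the only case used later.
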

\begin{proof}
If $G$ is not soluble, then Lemma \ref{centrereduction_pre} implies that $|\Omega| > k \cdot |Z|$. Hence, we suppose for the remainder of the proof that this inequality holds.

Lemma 3.7 of \cite{HW} states that $\bar{G}$ acts transitively and non-regularly with fixity at most $k$ on $\bar{\Omega}$ and that $|\bar{G}_{\bar{\omega}}| = |G_\omega|$. The action is faithful because $|\bar \Omega| = |\Omega| / |Z| > k$. The full preimage $U$ of $\bar{G}_{\bar{\omega}}$ in $G$ contains $G_\omega$ and, since $Z$ permutes the set of fixed points of $G_\omega$, it follows that $G_\omega$ is the kernel of the action of $U$ on $\bar{\omega}$. Now $G_\omega$ and $Z$ are normal subgroups of $U$ and $G_\omega \cap Z = 1$, which implies that $U = Z \times G_\omega$. This already proves (i) and (ii).

For the last part of the statement, let $k \leq 4$ and $H \leq G_\omega$ be such that $\ell := |\fix_{\bar{\Omega}}(\bar{H})| > k / |Z|$. For all $i \in \{1,...,\ell\}$ let $\omega_i \in \Omega$ and $\bar{\omega_i} := \omega_i^Z \in \bar{\Omega}$ be such that $\fix_{\bar{\Omega}}(\bar{H}) = \{\bar{\omega_1},...,\bar{\omega_\ell}\}$. Furthermore, let $K_i \leq ZH$ denote the kernel of $ZH$ in the action on $\bar{\omega_i}$. Since $Z \leq Z(ZH)$ permutes the set of fixed points of every element in $ZH$ semi-regularly, by Lemma \ref{normaliser}, it follows that $ZH/K_i$ acts regularly on $\bar{\omega_i}$. This means that $|ZH:K_i| = |Z|$. Every $x \in \bigcap_i K_i$ fixes the set $\bar{\omega_i}$ point-wise, which implies that it fixes at least $\ell \cdot |Z| > k$ points on $\Omega$. Then our fixity $k$ hypothesis gives that $x = 1$ and therefore $\bigcap_i K_i = 1$.

If $x \in H$ is such that $o(x)$ is prime and does not divide  $|Z|$, then $x \in K_i$ for every $i \in \{1,...,\ell\}$. But this contradicts the fact that $\bigcap_i K_i = 1$. Hence $H$ is a $2$-group in the case where $k \in \{2,4\}$, and it is a $3$-group in the case where $k = 3$. If $|Z| = k$ and $|H| > k$, then $|ZH:K_1| = |ZH:K_2| = |Z|$ implies that $K_1 \cap K_2 \neq 1$, but every element in $K_1 \cap K_2$ fixes $2 \cdot |Z| > k$ points on $\Omega$. This contradicts our fixity $k$ hypothesis. Hence the statement holds if $|Z| = k$.\\
In the remaining case, where $k = 4$ and $|Z| = 2$, the fixity $4$ hypothesis implies that $K_1 \cap K_2 \cap K_3 = 1$. If $|H| \geq 8$, then $|ZH| \geq 16$, and then we arrive at the contradiction $|K_1 \cap K_2 \cap K_3| \geq 16/2^3 \geq 2$. Hence $|H| \leq 4$, which concludes the proof.
\end{proof}

\begin{rem}
    The hypothesis that $G$ is not soluble or that $|\Omega| >k \cdot |Z|$ might seem unusual. But it turned out that, in our applications, we actually need both alternatives of this hypothesis, and it was the easiest and most flexible version to state it in this way.  
\end{rem}

\subsection{Construction principles for groups of given fixity.}

We need the notion of strongly closed subgroups and hence recall it briefly. If $U \leq G$, then we say that a subgroup $H \leq U$ is \textbf{strongly closed in $U$ with respect to $G$} if  and only if $H^g \cap U \leq H$ for all $g \in G$.\\
The following lemma will be used later to create examples.

\begin{lemma}\label{centreextension}
Let $G$ be a finite group and $Z \leq Z(G)$. Let $H \leq G$ be such that $H \cap Z = 1$ and suppose that $\bar{G} := G / Z$ acts with fixity $k \in \N$ on $\bar{G}/\bar{H}$ via right multiplication. If $H$ is strongly closed in $ZH$ with respect to $G$, then $G$ acts with fixity $|Z| \cdot k$ on $G/H$ via right multiplication.
\end{lemma}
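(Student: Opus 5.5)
We use Lemma~\ref{numberfpt}, and we need two things: every nontrivial $x \in G$ fixes at most $|Z|k$ cosets of $H$, and some $x$ attains this bound.

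\textbf{Setup.} Write $\bar{\Omega} := \bar{G}/\bar{H}$. Since $H \cap Z = 1$, we have $|\bar H| = |H|$, and the full preimage of $\bar H$ in $G$ is $ZH = Z \times H$. The coset $Hg$ lies in $ZHg$, so we get a surjective $G$-equivariant map $G/H \to \bar{\Omega}$. Its fibres have size $|ZH:H| = |Z|$. Consequently, if $Hg$ is fixed by $x$, then $\bar{x}$ fixes $\overline{H}\bar g$.

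\textbf{Upper bound.} Let $x \in G^\#$. There are two cases.
\begin{itemize}
\item If $x \in Z$, then $x$ fixes $Hg$ only if $gxg^{-1} = x \in H$. This is impossible because $H \cap Z = 1$, so $x$ has no fixed points.
\item If $x \notin Z$, then $\bar x \neq 1$ fixes at most $k$ points of $\bar\Omega$. It therefore suffices to show that $x$ fixes at most $|Z|$ points in each fibre. In fact, when $\bar x$ fixes $\bar H \bar g$, the element $x$ fixes \emph{every} point of the fibre above it.
\end{itemize}
To see this, note that $\bar x$ fixing $\bar H\bar g$ means $x^{g^{-1}} \in ZH$. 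Now use strong closure. The fibre consists of the cosets $Hzg$ with $z \in Z$, and since $z$ is central these equal $Hgz$. So $x$ fixes $Hgz$ if and only if it fixes $Hg$, i.e.\ the fibre is fixed all-or-nothing. The point is to show that $x^{g^{-1}} \in ZH$ forces $x^{g^{-1}} \in H$.

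There is a subtlety here: strong closure gives $H^{g'} \cap ZH \le H$, which controls conjugates of elements of $H$, not arbitrary elements of $ZH$. An element $y = zh \in ZH$ with $z \neq 1$ fixes no coset of $H$ at all, since its conjugates are of the form $z h'$ and cannot lie in $H$. So within a fibre, $x$ fixes either all $|Z|$ points or none. This bounds the number of fixed points of $x$ by $|Z| \cdot |\fix_{\bar\Omega}(\bar x)| \le |Z|k$.

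\textbf{Attaining the bound.} Choose $\bar y \in \bar G^\#$ with exactly $k$ fixed points, and without loss of generality $\bar y \in \bar H$. Lift it to $y \in H$. Here strong closure is essential. If $\bar y$ fixes $\bar H \bar g$, then $y^{g^{-1}} \in ZH$ and $y^{g^{-1}} \in H^{g^{-1}}$. Hence
\[
y^{g^{-1}} \in H^{g^{-1}} \cap ZH \le H,
\]
so $y$ fixes $Hg$ and, by the previous paragraph, the whole fibre. Therefore $y$ has exactly $|Z|k$ fixed points.

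\textbf{Main obstacle.} The upper bound requires no hypothesis beyond $H \cap Z = 1$. The main obstacle is the attainment step, i.e.\ showing that some lift has fixed points in every fibre lying over a fixed point of $\bar y$. This is exactly where the strong closure of $H$ in $ZH$ is used: it guarantees that a conjugate of an element of $H$ landing in $ZH$ actually lands in $H$. Finally, $G$ may not act faithfully on $G/H$, but fixity only concerns the maximum number of fixed points of nontrivial elements, which we have now determined.
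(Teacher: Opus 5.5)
Your proof is correct, but it takes a different route from the paper's.

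\textbf{The paper's route.} The paper plugs the counting formula of Lemma~\ref{numberfpt} into both actions. For $x \in H^\#$ it shows two things. First, the number of $G$-conjugates of $\langle x\rangle$ lying in $H$ equals the number of conjugates of $\langle \bar x\rangle$ lying in $\bar H$. Second, $|N_G(\langle x\rangle)| = |Z|\cdot|N_{\bar G}(\langle \bar x\rangle)|$. Both steps use strong closure together with Dedekind's identity $Z\langle x\rangle \cap H = \langle x\rangle$. The result is the exact identity $|\fix_{G/H}(x)| = |Z|\cdot|\fix_{\bar G/\bar H}(\bar x)|$.

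\textbf{Your route.} You work with the $G$-equivariant projection $G/H \to \bar G/\bar H$, whose fibres are the $Z$-orbits of size $|Z|$. Fixed points of $x$ lie over fixed points of $\bar x$, which gives the bound $|Z|k$ for every $x \in G^\#$ directly, with no closure hypothesis. Since $Z$ is central, every fixed-point set is a union of fibres. Strong closure enters exactly once: it shows that the lift $y \in H$ of $\bar y$ fixes a point in, hence all of, every fibre over a fixed point of $\bar y$. This is more elementary, as it needs no counting formula. It also makes explicit two points the paper leaves implicit: elements not conjugate into $H$ have no fixed points, and elements of $\bar H$ lift uniquely to $H$. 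It recovers the same exact identity for $y \in H^\#$.

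\textbf{Cleanup.} The upper-bound section contains a detour that should be deleted.
\begin{itemize}
\item The sentences claiming that $x$ fixes every point of a fibre over a fixed point of $\bar x$, and that $x^{g^{-1}} \in ZH$ forces $x^{g^{-1}} \in H$, are false for general $x$. Take $x = zh$ with $1 \neq z \in Z$, $h \in H$ and $g = 1$.
\item Your justification that such $zh$ fixes no coset silently needs strong closure. If $zh^g \in H$, then $h^g \in H^g \cap ZH \le H$, which forces $z \in H$.
\end{itemize}
None of this is used: the upper bound follows from the fibre size alone, and the all-or-nothing property follows from centrality of $Z$. Also, your opening line announces Lemma~\ref{numberfpt}, but you never use it.
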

\begin{proof}
We are going to show that $|\fix_{G/H}(x)| = |\fix_{\bar{G}/\bar{H}}(\bar{x})| \cdot |Z|$ for every $x \in H^\#$. According to Lemma \ref{numberfpt}, we have the following: 
\[
	|\fix_{G/H}(x)| = \frac{ |\{ \<x\>^g \leq H \mid g \in G \}| \cdot |N_G(\<x\>)| }{|H|} \quad
	\text{and}
	\quad
	|\fix_{\bar{G}/\bar{H}}(\bar{x})| = \frac{ |\{ \<\bar{x}\>^g \leq \bar{H} \mid g \in G \}| \cdot |N_{\bar{G}}(\<\bar{x}\>)| }{|\bar{H}|}.
\]
Our hypothesis $Z \cap H = 1$ yields that $|H| = |\bar{H}|$. For every $g \in G$, the strong closed-ness of $H$ in $ZH$ implies that $\<x\>^g \leq H$ whenever $\<x\>^g \leq ZH$. Hence $|\{\<\bar{x}\>^g \leq \bar{H} \mid g \in G\}| = |\{ \<x\>^g \leq H \mid g \in G \}|$. Let $g \in G$ be such that $\bar{g} \in N_{\bar{G}}(\<\bar{x}\>)$. Then $\<x\>^g \leq Z \<x\>$ and $\<x\>^g \leq H$, because $H$ is strongly closed in $ZH$. In particular, $\<x\>^g \leq Z \<x\> \cap H = \<x\>$ by Dedekind's identity and therefore $g \in N_G(\<x\>)$. Hence $|N_G(x)| = |Z| \cdot |N_{\bar{G}}(\bar{x})|$ and 
Lemma \ref{numberfpt} gives that 
$|\fix_{G/H}(x)| = |\fix_{\bar{G}/\bar{H}}(\bar{x})| \cdot |Z|$.
\end{proof}

\begin{lemma}\label{StabInNsg}
	Let $G$ be a finite group and $N \trianglelefteq G$. Suppose that $N$ acts transitively on a set $\Omega$ and let $\alpha \in \Omega$. Let $n := |G:N|$ and $\{ g_1,...,g_n \}$ be a right transversal of $N$ in $G$. Then $G$ acts with fixity
	\[
		\max_{x \in N^\#} \sum_{i = 1}^{n} | \FO(x^{g_i}) |
	\]
	on $G / N_\alpha$ by right multiplication.
\end{lemma}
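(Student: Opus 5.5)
The plan is to compute fixed points directly on the coset space $G/N_\alpha$ and compare them with fixed points of $N$ on $\Omega$. Since $N$ is transitive on $\Omega$, we identify $\Omega$ with $N/N_\alpha$ as $N$-sets via $\alpha^m \leftrightarrow N_\alpha m$. As $G=\bigcup_i N g_i$ (right transversal), and $N \trianglelefteq G$, we have $N g_i = g_i N$. So every coset of $N_\alpha$ in $G$ has the form $N_\alpha m g_i$ with $m\in N$, and the coset space decomposes as a disjoint union
\[
G/N_\alpha = \bigcup_{i=1}^n \{N_\alpha m g_i \mid m \in N\},
\]
which consists of the $N$-orbits on $G/N_\alpha$.

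The first step is to look at elements of $G$ outside $N$. Such an element $y$ satisfies $N y g_i^{-1} \neq N$ for every $i$, so $y$ permutes the $N$-orbits above without fixing any of them. Hence $y$ has no fixed points at all, and the fixity is determined by elements of $N^\#$. Note also that the action is faithful on $N$ in the relevant sense: an element $x\in N^\#$ fixing every point would fix all points of $\Omega$ in the $i=1$ block, and we only need the maximum anyway.

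The main step is, for $x \in N^\#$, to count fixed points in the block indexed by $i$. We have
\[
N_\alpha m g_i x = N_\alpha m g_i \iff N_\alpha m\, x^{g_i^{-1}} = N_\alpha m,
\]
since $g_i x g_i^{-1} = x^{g_i^{-1}} \in N$. Under the identification $N_\alpha m \leftrightarrow \alpha^m$, the $i$-th block therefore contributes exactly $|\FO(x^{g_i^{-1}})|$ fixed points. The formula in the statement uses $x^{g_i}$, so I would reconcile the two. Replacing the transversal $\{g_i\}$ by $\{g_i^{-1}\}$ (also a right transversal, because $N$ is normal) reindexes the same sum. Alternatively, one can note that $\sum_i |\FO(x^{g_i})|$ depends only on the cosets $Ng_i$: for $m\in N$, $x^{m g_i}$ is $N$-conjugate to $x^{g_i}$, and $N$-conjugate elements of $N$ have equally many fixed points on $\Omega$. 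Since $i \mapsto Ng_i^{-1}$ is a bijection of $G/N$, the two sums agree.

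Summing over $i$ and taking the maximum over $x \in N^\#$ then gives the claimed fixity. The only delicate point is this bookkeeping of conjugation direction and the well-definedness of the identification, namely that $N_\alpha m = N_\alpha m'$ if and only if $\alpha^m = \alpha^{m'}$; I expect no real obstacle.
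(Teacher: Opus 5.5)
Your proof is correct, but it takes a different route from the paper's. The paper argues with permutation characters. The character of $N$ on $\Omega$ is $1_{N_\alpha}^N$. By transitivity of induction, inducing once more gives the permutation character $1_{N_\alpha}^G$ of $G$ on $G/N_\alpha$. The standard formula for a character induced from a normal subgroup then shows that this character vanishes on $G\setminus N$ and equals $\sum_i |\FO(x^{g_i})|$ on $N$.

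You instead count directly:
\begin{itemize}
\item you split $G/N_\alpha$ into the $n$ orbits $\{N_\alpha m g_i \mid m\in N\}$ of $N$;
\item you note that elements outside $N$ permute these blocks and fix none of them;
\item you identify the $i$-th block with $\Omega$, with the $N$-action twisted by conjugation with $g_i$.
\end{itemize}

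The character-theoretic proof is shorter, given the citations to Isaacs. Yours is self-contained and elementary, and it exhibits the orbit decomposition of $G/N_\alpha$ under $N$ explicitly. You also reconcile $x^{g_i}$ with $x^{g_i^{-1}}$ explicitly: the sum is independent of the coset representatives, and $Ng\mapsto Ng^{-1}$ is a bijection of $G/N$. The paper absorbs this same bookkeeping silently into its appeal to the definition of induced characters. Your reconciliation is correct.

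Two small points. First, the condition ensuring that $y\in G\setminus N$ fixes no block is $Ng_iy\neq Ng_i$, equivalently $g_iyg_i^{-1}\notin N$. It is not $Nyg_i^{-1}\neq N$: that inequality fails for the index $i$ with $y\in Ng_i$. Your conclusion is unaffected, since $N\trianglelefteq G$ gives $Ng_iy=Ng_i$ if and only if $y\in N$. Second, your remark about faithfulness can be dropped. The lemma neither assumes nor asserts faithfulness, and the fixity count needs only that elements of $G\setminus N$ have no fixed points.
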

\begin{proof}
	Note that for every $x \in N$, $|\FO(x)|$ is the value of the permutation character $\chi_{\text{perm}}$ for the action of $N$ on $\Omega$. By Lemma 5.14 in \cite{Isaacs}, this character is obtained by inducing the principle character of $N_\alpha$ to $N$. Then, by Exercise 5.1 in \cite{Isaacs}, we obtain the permutation character $\chi_{\text{perm}}^G$ for the action of $G$ on $G / N_\alpha$ by inducing $\chi_{\text{perm}}$ to $G$. We see that $\chi_{\text{perm}}^G$ vanishes on $G \setminus N$. Then the definition of induced characters yields that $\chi_{\text{perm}}^G (x) = \sum_{i = 1}^n |\FO(x^{g_i})|$ for all $x \in N$. Finally, the definition of fixity concludes the proof.
\end{proof}

The following lemma shows how, given two positive integers
$r$ and $k$ and a group that acts with fixity $k$, we can construct a group that acts with
fixity $r \cdot k$. 

\bigskip
\begin{lemma}\label{Fix2toFix4}
Let $k,r \in \N$ and let $G$ be a group with the following properties:
$E \le G$ acts faithfully, transitively,
		and with fixity $k$ on a set
		$\Delta$, $c \in G$ has order~$r$ and $G= E \times \langle c \rangle$.
		Then $G$ acts faithfully, transitively, and with fixity $r\cdot k$ on a set of size $r\cdot
		|\Delta|$.
\end{lemma}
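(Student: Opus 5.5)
We construct the set explicitly. Let $\Omega := \Delta \times \langle c \rangle$, which has size $r\cdot|\Delta|$. Let $G = E\times\langle c\rangle$ act on $\Omega$ by
\[
(\delta, c^j)^{(e, c^i)} := (\delta^e, c^{j+i}).
\]
This is the product action of $E$ on $\Delta$ and of $\langle c\rangle$ on itself by right multiplication. It is clearly an action.

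We first check the easy properties. Transitivity holds because $E$ is transitive on $\Delta$ and $\langle c\rangle$ is regular on itself. Faithfulness holds because an element $(e,c^i)$ acting trivially must satisfy $c^i=1$, and then $e$ acts trivially on $\Delta$, so $e=1$ by the faithfulness of $E$ on $\Delta$.

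Next we count fixed points. The point $(\delta,c^j)$ is fixed by $(e,c^i)$ if and only if $c^i=1$ and $\delta^e=\delta$. So elements with $i\not\equiv 0$ modulo $r$ have no fixed points. For $e\in E$ we have $\FO(e) = \fix_\Delta(e)\times\langle c\rangle$, hence $|\FO(e)| = r\cdot|\fix_\Delta(e)|$. The maximum over $e\in E^\#$ is $r\cdot k$, by the fixity $k$ hypothesis for $E$ on $\Delta$. The identity is excluded, and since $r\cdot k\ge 1$ the maximum over $G^\#$ is exactly $r\cdot k$.

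There is no real obstacle here. The only point needing care is that the stabiliser $G_{(\delta,1)} = E_\delta$ meets $\langle c\rangle$ trivially, which is what makes all elements outside $E$ fixed-point-free. Equivalently, one may take $\Omega = G/E_\delta$ with right multiplication, and apply Lemma~\ref{StabInNsg} with $N=E$ and transversal $\{1,c,\dots,c^{r-1}\}$. Since $c$ centralises $E$, each of the $r$ summands equals $|\fix_\Delta(x)|$, and the same value $r\cdot k$ follows.
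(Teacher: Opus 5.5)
Your proof is correct. Your closing remark is exactly the paper's argument. The paper takes a point stabiliser $H$ of $E$ on $\Delta$ and lets $G$ act on $G/H$ by right multiplication. It then applies Lemma~\ref{StabInNsg} with $N=E$ and the transversal $\{c,\dots,c^r\}$, where centrality of $c$ makes all $r$ summands equal to $|\fix_\Delta(x)|$.

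Your main argument builds the same permutation representation concretely: the stabiliser of $(\delta,1)$ in your action on $\Delta\times\langle c\rangle$ is $E_\delta$, so it is the coset action on $G/E_\delta$. You then count fixed points by hand. This avoids the induced-character computation behind Lemma~\ref{StabInNsg} and gives an explicit check of faithfulness, which the paper only asserts. The paper's version, in turn, is a one-line application of a lemma it also uses elsewhere (for instance in Example~\ref{Alt6Fix3}).
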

\begin{proof}

    Let $H$ be a point stabiliser of $E$ in its action on $\Delta$ and note that $\<c\> \leq Z(G)$. We see that $E \trianglelefteq G$ and that $\{ c^1,...,c^r \}$ is a right transversal for $E$ in $G$. By Lemma \ref{StabInNsg}, $G$ acts with fixity
	\[
		\max_{x \in E^\#} \sum_{i = 1}^r |\fix_{\Delta}(x^{c^i})| = \max_{x \in E^\#} \sum_{i = 1}^r |\fix_{\Delta}(x)| = r \cdot \max_{x \in E^\#} |\fix_{\Delta}(x)| = r \cdot k
	\]
	on $G / H$ by right multiplication. This action is faithful and transitive. Moreover, $|E:H|=|\Delta|$ by hypothesis and therefore $|G/H|=\frac{|G|}{|H|}=\frac{r \cdot |E|}{|H|}=r \cdot |\Delta|$ as claimed.
\end{proof}

\begin{lemma}\label{RegNSG}
	Let $K$ be a group, let $1 \neq H \le \aut(K)$ and set $k := \max \{|C_K(h)| \mid h \in H^\# \}$.
	 Then $G := K:H$ acts transitively and with fixity $k$ on $\Omega := G / H$ by right multiplication. The point stabilisers are isomorphic to $H$ and $K \trianglelefteq G$ acts regularly on $\Omega$.
\end{lemma}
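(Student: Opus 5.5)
We will identify $\Omega = G/H$ with the set of right cosets $\{Hg \mid g \in G\}$. Since $K \trianglelefteq G$, $G = KH$ and $K \cap H = 1$, every coset has the form $Hk$ for a unique $k \in K$. Hence $K$ acts regularly on $\Omega$ by right multiplication, and $\Omega$ may be identified with $K$. The action of $G$ is transitive because it is right multiplication on cosets. The stabiliser of the point $H$ is $H$ itself, so the point stabilisers are the conjugates of $H$, all isomorphic to $H$.

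Next we compute fixed points. Under the identification $Hk \leftrightarrow k$, an element $h \in H$ sends $Hk$ to $Hkh = H h^{-1}kh = Hk^h$. So $h$ acts on $K$ as the automorphism $k \mapsto k^h$, and therefore
\[
\FO(h) = \{ Hk \mid k \in C_K(h) \}, \qquad |\FO(h)| = |C_K(h)|.
\]
For $h \in H^\#$ this value is at most $k$, and the maximum $k$ is attained. Here we use that $H \le \aut(K)$ acts faithfully on $K$: this guarantees that $H$ embeds into $G$ and that the action on $\Omega$ is faithful, since the kernel of the action lies in $H$ and acts trivially on $K$.

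It remains to handle arbitrary non-trivial elements $g \in G$. If $g$ fixes some point, then $g$ lies in a point stabiliser $H^x$. Conjugation by $x$ preserves fixed-point counts, because $\FO(g) = \FO(g^{x^{-1}})x$. Hence $|\FO(g)| = |C_K(h)|$ for some $h \in H^\#$. If $g$ fixes no point, it contributes $0 \le k$. Since $H \neq 1$, we have $k \ge 1$, and the fixity is exactly $k$. The only step requiring care is the identification $Hkh = Hk^h$; everything else is routine. Alternatively, one could invoke Lemma~\ref{numberfpt}, but the direct computation above is cleaner.
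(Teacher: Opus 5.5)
Your proof is correct and takes essentially the same approach as the paper, which cites (15.11) of Aschbacher to identify the action of $H$ on $\Omega$ with its conjugation action on $K$, whereas you verify this directly via $Hkh = Hk^h$. You also make explicit the conjugation step for non-trivial elements outside $H$, which the paper leaves implicit; one cosmetic point is that you use the letter $k$ both for the fixity and for elements of $K$.
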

\begin{proof}
	Clearly $G$ acts transitively on $\Omega$ with point stabilisers isomorphic to $H$. By construction, $K \trianglelefteq G$ and $K \cap H = 1$. Since every point stabiliser is conjugate to $H$ in $G$, we see that $K$ acts semi-regularly. Moreover, $|\Omega| = |G:H| = |K|$, which means that the action is in fact regular. Now (15.11) in \cite{Aschbacher} allows us to identify the action of $H$ on $\Omega$ with the conjugation action of $H$ on $K$. Thus, $k$ is the maximum number of fixed points of non-trivial elements in $G$.
\end{proof}

Finally, we need one more concept before we look at orbits of normal subgroups.
We say that two permutation groups $(G_1, \Omega_1)$ and $(G_2, \Omega_2)$
are \textbf{isomorphic as permutation groups} if and only if there are a bijection $f: \Omega_1 \rightarrow \Omega_2$ and a group isomorphism $\psi: G_1 \rightarrow G_2$ such that 
$$f(\omega^g) = (f(\omega))^{g^\psi}$$
for all $\omega \in \Omega_1$ and $g \in G_1$.

\begin{lemma}\label{normalaction}
Suppose that the group $G$ acts transitively and faithfully on a set $\Omega$. Let $N \unlhd G$ and $\alpha \in \Omega$.
Then, the permutation groups $(N,\alpha^N)$ and $(N, \beta^N)$ are isomorphic  for all $\beta \in \Omega$. 
\end{lemma}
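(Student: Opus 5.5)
The idea is to use transitivity of $G$ to move $\alpha$ to $\beta$, and to let the automorphism of $N$ induced by conjugation carry the $N$-orbit of $\alpha$ onto the $N$-orbit of $\beta$. Let $\beta \in \Omega$. Since $G$ acts transitively on $\Omega$, we choose $g \in G$ with $\alpha^g = \beta$. Then we define
\[
f \colon \alpha^N \to \beta^N, \qquad \omega \mapsto \omega^g,
\]
and
\[
\psi \colon N \to N, \qquad n \mapsto n^g = g^{-1}ng .
\]
Because $N \unlhd G$, the map $\psi$ is an automorphism of $N$.

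We first check that $f$ is well-defined and bijective onto $\beta^N$. For $n \in N$ we have
\[
(\alpha^n)^g = \alpha^{ng} = \alpha^{g g^{-1} n g} = \beta^{n^g},
\]
and $n^g \in N$, so $f$ maps $\alpha^N$ into $\beta^N$. The inverse map is $\omega \mapsto \omega^{g^{-1}}$, which by the same computation with $g^{-1}$ maps $\beta^N$ into $\alpha^N$. Hence $f$ is a bijection.

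Next we verify the compatibility condition $f(\omega^n) = f(\omega)^{n^\psi}$. Write $\omega = \alpha^m$ with $m \in N$. Then
\[
f(\omega^n) = \alpha^{mng} = \alpha^{g\,(mn)^g} = \beta^{m^g n^g} = (\alpha^{mg})^{n^g} = f(\omega)^{n^\psi}.
\]
Thus $(f,\psi)$ is an isomorphism of permutation groups between $(N,\alpha^N)$ and $(N,\beta^N)$.

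One point needs care: $N$ may not act faithfully on an orbit, so strictly speaking the permutation groups are the images of $N$ in $\Sym(\alpha^N)$ and $\Sym(\beta^N)$. The map $\psi$ carries the kernel of $N$ on $\alpha^N$, which is $\bigcap_{\omega\in\alpha^N} N_\omega$, onto the kernel on $\beta^N$, since $N_\omega^{\,g} = N_{\omega^g}$. Therefore $\psi$ induces an isomorphism between the induced permutation groups, and the same $f$ still works. There is no real obstacle here beyond this bookkeeping of the kernels.
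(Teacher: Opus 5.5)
Your proof is correct and is essentially the paper's argument: the paper also picks $g$ with $\alpha^g=\beta$, lets $\psi$ be conjugation by $g$, and defines $f(\alpha^h):=\beta^{h^\psi}$, which is the same map as your $\omega\mapsto\omega^g$; your formulation just makes well-definedness immediate. Your closing remark about kernels is harmless but unnecessary, because the paper defines isomorphism of permutation groups for the acting group $N$ itself rather than its image in $\Sym(\alpha^N)$.
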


\begin{proof}
Let $\beta \in \Omega$. Since $G$ acts transitively on $\Omega$, there exists an element $g \in G$ such
that $\alpha^g = \beta$. Let $\psi$ be the automorphism of  $N$ that is induced by conjugation by $g$. Then $N_\alpha^\psi = N_\beta$. Let $f:\alpha^N \rightarrow \beta^N$ be defined as follows: If $h \in N$, then $f(\alpha^h):=\beta^{h^\psi}$. This is well-defined by the previous paragraph. If $h \in N$ and $\omega:= \alpha^h \in \alpha^N$, then it follows for all $x \in N$ that 
$$f(\omega^x)= f(\alpha^{hx}) = \beta^{(hx)^\psi} = \beta^{h^\psi x^\psi} =
f(\alpha^h)^{x^\psi} = f(\omega)^{x^\psi}.$$
Therefore, the permutation groups $(N,\alpha^N)$ and $(N, \beta^N)$
are isomorphic.
\end{proof}

\subsection{\texttt{GAP} code.}

There are several places where we use \texttt{GAP} to look at individual cases or construct specific examples. 
Therefore, we show two functions that we frequently use when \texttt{GAP} supports our arguments.

The first function takes an integer $k$ and returns a function which checks whether a given transitive permutation group $G$ acts with fixity $k$.

\begin{verbatim}
HasFixity := k -> function(G)
    local g, exact;
    if NrMovedPoints(G) <= k then
        return false;
    fi;
    if not IsTrivial( Stabiliser( G, MovedPoints(G){[1..(k+1)]}, OnTuples) ) then
        return false;
    fi;
    exact := false;
    for g in List(ConjugacyClasses(G), Representative) do
        if g = Identity(G) then continue; fi;
        if NrMovedPoints(G) - NrMovedPoints(g) > k then return false; fi;
        if NrMovedPoints(G) - NrMovedPoints(g) = k then exact := true; fi;
    od;
    return exact;
end;
\end{verbatim}

The next function takes a transitive permutation group and returns its fixity.
\begin{verbatim}
GetFixity := G -> Maximum(List(
    Difference(ConjugacyClasses(G), [Identity(G)^G] ),
    x -> NrMovedPoints(G) - NrMovedPoints(Representative(x))
));
\end{verbatim}

\section{Examples}

\begin{ex}\label{ExampleFix4}
	Consider the ring of Eisenstein integers $R := \Z[\omega]$, where $\omega \in \C$ is a primitive third root of unity. Let $n \geq 2$ and $V := R / 2^n R \cong C_{2^n} \times C_{2^n}$ and let $\varphi \in \aut(V)$ be the automorphism of $V$ (as a group with addition) that is induced by multiplication by $\omega + 1$.
    Then $o(\varphi) = 6$, $\varphi^2$ corresponds to multiplication by $(\omega +1)^2=\omega$ and $\varphi^3$ corresponds to multiplication by $(\omega +1)^3=-1$.     
    A small calculation shows that $C_V(\varphi^2) = 0$ and that $C_V(\varphi^3) \cong C_2 \times C_2$. Let $G := V:H$, where $H := \< \varphi \>$ and where we view $V$ as $C_{2^n} \times C_{2^n}$ with multiplication. By Lemma \ref{RegNSG}, 
    $G$ acts with fixity $4$ on $\Omega := G / H$ by right multiplication. Moreover, $F(G) \cong (C_{2^n} \times C_{2^n}) : C_2$ and $F(G) \cap H = \<\varphi^3\> \cong C_2$ contains an involution with exactly four fixed points. Thus we have constructed an infinite family of examples for Case (1)(a) of Theorem \ref{FinalTheorem} where $G$ is not a $2$-group.
\end{ex}

For the remainder of this article, we will often need the following set-up:

\begin{hyp}\label{hyp}
  Suppose that $G$ is a finite group that acts faithfully, transitively and with fixity
		$4$
		on a set $\Omega$.
		Let $\alpha \in \Omega$.
\end{hyp}        

\subsection{Examples where $F(G)$ does not act semi-regularly.}

We start by describing pairs $(G,G_\alpha)$ where $G$ satisfies Hypothesis~\ref{hyp} in its action on a set $\Omega$, and where in addition $|\Omega| \leq 28$ and $F(G)$ does not act faithfully on $\alpha^{F(G)}$. These are the examples presented in Table~\ref{TableNotFaithful}.

\begin{ex}\label{Ex1(b)}
Let $F$ be a field of prime power order $q$ and let $H \leq \GL_2(q)$. Denote by $V$ the natural $H$-module and consider the group $G := V: H$. We can define an action of $G$ on $\Omega := \{ U + v \mid v \in V$$,  $ $U \leq V$ and $U$ is 1-dimensional$ \}$ in the following way: For all $x \in V$, $\varphi \in H$ and $U + v \in \Omega$ let $(U + v)^{\varphi x} := U^\varphi + v^\varphi + x$. This action is transitive if $H$ is transitive on the set of $1$-dimensional subspaces of $V$.

For every $0 \neq x \in V$, we see that $\fix_\Omega(x) = \{ \< x \>_F + v \mid v \in V \}$ has size $q$. Hence $G$ acts with fixity at least $q$ on $\Omega$. 
Let $x \in V$ and $\varphi \in H^\#$ be such that $\varphi x$ has more than $q$ fixed points on $\Omega$. Then there exist $U_1 + v_1, U_2 + v_2 \in \FO(\varphi x)$, where $U_1 \neq U_2$. For each $i \in \{1,2\}$, let $0 \neq u_i \in U_i$ and note that $U_i^\varphi = U_i$ implies the existence of $\lambda_i \in F$ such that $u_i^\varphi = \lambda_i u_i$. Since $\{u_1, u_2\}$ is a basis of $V$, we find $\alpha_i, \beta_i \in F$ such that $U_i + v_i = U_i + \alpha_i u_{3 - i}$ and $x = \beta_1 u_1 + \beta_2 u_2$. Now
\[
	(U_i + v_i)^{\varphi x} = (U_i + \alpha_i u_{3 - i})^{\varphi x} = U_i + (\alpha_i \lambda_{3 - i} + \beta_{3 - i} ) u_{3-i} = U_i + \alpha_i u_{3 - i}
\]
and thus $\alpha_i = \alpha_i \lambda_{3 - i} + \beta_{3 - i}$. If follows that $\alpha_1$ and $\alpha_2$ are uniquely determined by $\varphi x$ unless $\lambda_{3 - i} = 1$, in which case $\beta_{3 - i} = 0$.
If $\lambda_1 = \lambda_2 = 1$, then $\varphi$ is the identity in contradiction to $\varphi \in H^\#$. If $\lambda_1 = \lambda_2 \neq 1$, then $\varphi$ stabilises every $1$-dimensional subspace and for every $1 \neq U \leq V$ there exists exactly one $U + v \in V/U$ that is fixed by $\varphi x$. This makes a total of $q+1$ fixed points.
If $\lambda_1 \neq \lambda_2$, then $\varphi$ has exactly two distinct eigenspaces and thus every fixed point of $\varphi x$ can be written as $U_1 + w$ or $U_2 + w$ for some $w \in V$. In that case, $\varphi x$ has two fixed points if $\lambda_1 \neq 1 \neq \lambda_2$ and $q+1$ fixed points if $\lambda_1 = 1$ or $\lambda_2 = 1$.

We conclude that $G$ acts transitively with fixity $q+1$ if $H$ contains a non-trivial multiple of the identity or a diagonalisable matrix with eigenvalue $1$ of multiplicity $1$, and with fixity $q$ otherwise. In this way, we obtain all the examples in Table \ref{TableNotFaithful} where $F(G) \cong E_9$ and some of the examples where $F(G) \cong E_{16} \cong \GF(4)^2$. 
\end{ex}

\begin{ex}\label{ExRow8and20}
	Similarly to the previous example, we can take a field $F$ of order $8$ and its multiplicative group, denoted by $F^\times$, and we can construct an action of a subgroup $G$ of $F:F^\times : \Gal(F)$, where $G$ contains $F:F^\times$, on the set $\Omega := \{ U + v \mid v \in F, U \leq F$ of order $2 \}$ in the following way: For all $U + v \in \Omega$, $x \in F$, $m \in F^\times$ and $\varphi \in \Gal(F)$ we define $(U + v)^{\varphi m x} := m U^\varphi + m v^\varphi + x$. A small calculation shows that this is a transitive fixity $4$ action. In this way, we obtain Rows 8 and 20 of Table \ref{TableNotFaithful}.
\end{ex}

\begin{ex}\label{ExRow28}
According to page 46 of \cite{Atlas}, the group $\PSp_6(2)$ admits a permutation representation on a set $\Omega$ with 28 points. This representation is also used by the \texttt{GAP} package AtlasReps \cite{AtlasRep} for \texttt{AtlasGroup("S6(2)")}. In $\PSp_6(2)$ we find a maximal subgroup of isomorphism type $E_{2^6} : \PSL_3(2)$ which contains a subgroup $G \cong {E_8}^\cdot \PSL_3(2)$. Using \texttt{GAP}, we see that $G$ acts transitively with fixity $4$ on such a set $\Omega$ of size 28 with point stabilisers isomorphic to $\GL_2(3)$. This is Row 28 of Table \ref{TableNotFaithful}.
\end{ex}

\subsection{Examples where $F(G)$ acts semi-regularly.}

In the next few examples we look more closely at
Case (2) of Theorem~\ref{FinalTheorem}, and then we will come back to it in Section 5. 

\begin{ex}\label{ExFinalTheorem2a}
	Let $A := \<a\>$ be a cyclic group of order $4$ and let $B := \< b \>$ be a non-trivial cyclic group of odd order. Define $K := A \times B$ and consider the involution $\varphi \in \aut(K)$ that acts as follows: $a^\varphi := a$ and $b^\varphi := b^{-1}$. Then $C_A(\varphi) = A$, $C_B(\varphi) = 1$ and thus $C_K(\varphi) = A$ has order $4$. By Lemma \ref{RegNSG},
    $G := K:\< \varphi \>$ acts with fixity $4$ on $\Omega := G / \<\varphi\>$ by right multiplication and $K = F(G)$ is a regular normal subgroup. Thus, $G$ is an example for Case (2)\,(a) in Theorem~\ref{FinalTheorem} in its action on $\Omega$ as defined above. 
\end{ex}

\bigskip
\begin{ex}\label{ExFinalTheorem2c}
    Let $G := \Sym_5$ and $H := \< (1,2,3) \>$. Then $H \leq E := E(G) = \Alt_5$ and a small calculation shows that $N_G(H) = \< (1,2,3), (1,2), (3,4) \> \cong \Sym_3 \times C_2$ and $N_E(H) = \< (1,2,3), (1,2)(4,5) \> \cong \Sym_3$.
    Consider the action of $G$ on $\Omega := G / H$ by right multiplication. By Lemma \ref{numberfpt}, the non-trivial elements in $H$ have $|N_G(H)|/|H| = 4$ fixed points on $\Omega$ and $|N_E(H)|/|H| = 2$ fixed points on every $E$-orbit. Thus $G$ acts with fixity $4$ on $\Omega$, while $E$ acts with fixity $2$ on each of its orbits. This is an example for Case (2)\,(c)\,(i) of Theorem \ref{FinalTheorem}.
\end{ex}

\bigskip
\begin{ex}\label{Alt6Fix3}
Case (2)\,(d)\,(i) corresponds to the natural action of $\Sym_6$ on $\{1,...,6\}$, and we see that this action has fixity $4$. For the other case, let $G$ be one of the groups $M_{10}$, $\PGL_2(9)$ or $\aut(\Alt_6)$ and let $N \trianglelefteq G$ be the normal subgroup of index $2$ which is isomorphic to $\Alt_6$ or $\Sym_6$, respectively. Consider the natural action of $N$ on $\{1,...,6\}$. In all cases, there exists an involution $t \in G \setminus N$ which interchanges subsets of $N$ as follows, by conjugation: 
$3$-cycles in $N$ are mapped to double-$3$-cycles in $N$, and vice versa, and transpositions in $N$ are mapped to triple transpositions in $N$, and vice versa.
Let $H \leq N$ be a point stabiliser in the natural action on $\{1,...,6\}$. We let $x \in N^\#$ and we are interested in the value $|\fix_{N/H}(x)| + |\fix_{N/H}(x^t)|$, which we will now calculate case by case: 

If $x \in N^\#$ is a transposition or a triple transposition, then $|\fix_{N/H}(x)| + |\fix_{N/H}(x^t)|=4+0 = 4$.

If $x \in N^\#$ is a double transposition or a $4$-cycle, then
the value is $2 + 2 = 4$.

If $x$ is a $3$-cycle or a double-$3$-cycle, then the value is $3 + 0 = 3$.

If $x$ is a $5$-cycle or the product of a transposition with a disjoint $3$-cycle, then the value is $1+1 = 2$, and in all other cases, the value is $0$.

Since $\{ \id, t\}$ is a transversal of $N$ in $G$, Lemma \ref{StabInNsg} gives that $G$ acts with fixity $4$ on $G/H$. We also note that $\Alt_6 \cong E(G) \leq N$ acts naturally and therefore with fixity $3$ on one of its orbits, and hence we have constructed examples for Case (2)\,(d)\,(ii).
\end{ex}

\bigskip
\begin{ex}\label{ExFinalTheorem2eii}
	Let $q \geq 5$ be an odd prime power, $E := \PSL_2(q)$, let $t$ be an involution and let $G := E \times \< t \>$. Let $C := \< c \> \leq E$ be a cyclic subgroup chosen as follows: 
    If $q \equiv 1$ modulo $4$, then $o(c)=(q-1)/2$ and if $q \equiv -1$ modulo $4$, then $o(c)=(q+1)/2$. Set $H := \< c t \> \cong C$ and consider the action of $G$ on $\Omega := G / H$ by right multiplication. Since $|G:E| = |H:H \cap E| = |H:\<c^2\>| = 2$, we know that $E$ acts transitively on $\Omega$, and this is a fixity $4$ action by Table \ref{TableAllFix}. Moreover, $|H|$ is divisible by $4$, all elements of prime order in $H$ are contained in $E$, and fixity $4$ is attained by some element of prime order. This implies that all elements in $G^\#$ fix at most four points, and hence $G$ acts with fixity $4$ on $\Omega$ as well. 
    This is Case (2)\,(e)\,(ii) of Theorem \ref{FinalTheorem}.
	
\end{ex}

\bigskip
\begin{ex}\label{ExFinalTheorem2eiv}
	Let $\SLi_2(5) \leq G \leq \GL_2(5)$ and consider the natural action of $G$ on a two-dimensional vector space $V$ over a field with five elements. Then $G$ acts transitively and faithfully on $\Omega := V \setminus \{ 0_V \}$. Moreover, every non-trivial element in $G$ centralises either a zero-dimensional or a one-dimensional subspace of $V$, which means that it has zero or four fixed points in $\Omega$. Thus, the action has fixity $4$, and the elements of order $5$ have exactly four fixed points. If $Z(\GL_2(5)) \leq G$, then $G$ is an example for Case (2)\,(e)\,(iv) of Theorem \ref{FinalTheorem} with point stabilisers of isomorphism type $D_{10}$ or $C_5 : C_4$.

	\medskip

    For the remaining case of (2)\,(e)\,(iv), we use the Transitive Groups Library~\cite{TransGrp} and look at the group \texttt{G := TransitiveGroup(40, 188)}. The group has isomorphism type $\SL_2(5) : C_2$ and the point stabilisers are isomorphic to $\Sym_3$. The expression \texttt{GetFixity(G)} in \texttt{GAP} evaluates to $4$.
\end{ex}

There are some more examples for Case (e) of Theorem \ref{FinalTheorem} in Section 5.

\section{The structure and action of the Fitting subgroup}

We begin with a general result about nilpotent groups that act with fixity at most $4$, and we will see that the centre plays an important role.

\bigskip
\begin{lemma}\label{nilpotent4}
    Let $k$ be a positive integer and let $G$ be a nilpotent group that acts transitively, faithfully, and with fixity $k$ on a set $\Omega$ such that $1\leq k\leq 4$. Let $\alpha \in \Omega$.
	Then one of the following holds:
    \begin{enumerate}[\quad (1)]
        \item $k=2$, $G$ is a dihedral or semi-dihedral $2$-group, and $|G_\alpha| = 2$.
        \item $k=3$, $G$ is a $3$-group of maximal class, and $|G_\alpha| = 3$.
        \item $k=4$, $G$ is a $2$-group of sectional $2$-rank at most $4$, and $|G_\alpha| \leq 8$.
	\end{enumerate}
\end{lemma}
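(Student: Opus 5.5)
The plan is to first show that $G$ is a $p$-group with $p = k$ or $p=2$, and then treat each value of $k$ separately, using the normaliser and centraliser bounds of Lemma~\ref{normaliser} together with the centre reduction of Lemma~\ref{centrereduction}.

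\emph{Reduction to a $p$-group.} Since the action has fixity $k \geq 1$ we have $G_\alpha \neq 1$, and faithfulness gives $G_\alpha \neq G$. Let $r$ be a prime with $r > k$. By Lemma~\ref{normaliser}~(v), either $G_\alpha$ is an $r'$-group or it contains a Sylow $r$-subgroup $R$ of $G$. Since $G$ is nilpotent, $R \unlhd G$, so in the second case $R$ lies in every point stabiliser and hence $R = 1$ by faithfulness. Therefore every prime dividing $|G_\alpha|$ is at most $k \le 4$, that is, lies in $\{2,3\}$.

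Now let $p \in \pi(G_\alpha)$ and choose $x \in Z(G_\alpha)$ of order $p$. Write $Q := O_{p'}(G)$; since $G$ is nilpotent, $G$ is the direct product of its Sylow subgroups, so $Q \le C_G(x)$. By Lemma~\ref{normaliser}~(ii), $x$ fixes the whole orbit $\alpha^{C_G(x)}$, and in particular $|Q : Q_\alpha| \le k$.
\begin{itemize}
\item If $G_\alpha$ is a $p$-group, then $Q_\alpha = 1$, so $|Q| \le k$. A non-trivial $Q$ meets $Z(G)$ non-trivially, and then Lemma~\ref{normaliser}~(iv),(vi) forces the only possibility $p=2$, $|Q|=3=k$.
\item In that case, take $1 \neq X := O_2(G)_\alpha$. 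Since $X$ is a proper subgroup of the $2$-group $O_2(G)$, we have $N_{O_2(G)}(X) > X$, so an involution of $Z(X)$ would fix at least $2 \cdot 3 > 3$ points, a contradiction.
\item If instead $G_\alpha$ involves both primes $2$ and $3$, the same argument applied to each prime forces both Sylow subgroups to have tiny orbits, contradicting $|\Omega| > k$.
\end{itemize}
Hence $G$ is a $p$-group. Because $x \in G_\alpha$ has $|\FO(x)| \equiv |\Omega| \equiv 0 \pmod p$, the fixity is a multiple of $p$. So $p = k$ for $k \in \{2,3\}$, $p = 2$ for $k=4$, and $k = 1$ is impossible.

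\emph{The cases $k = 2,3$.} Here every $x \in G_\alpha^\#$ has exactly $p$ fixed points. I would show $|G_\alpha| = p$ as follows. For $1 \neq X \le G_\alpha$ proper in its normaliser in $G$, Lemma~\ref{normaliser}~(i) gives $|N_G(X) : N_{G_\alpha}(X)| = p$; applying this with $X = G_\alpha$ and with $X = Z(G)\cap$-type subgroups, the upshot is that an element $t \in G_\alpha$ of order $p$ satisfies $|C_G(t)| = p^2$. Then:
\begin{itemize}
\item for $p = 2$, a $2$-group with an involution whose centraliser has order $4$ is dihedral or semi-dihedral, by the classical Suzuki-type result;
\item for $p = 3$, a $p$-group containing an element of order $p$ with centraliser of order $p^2$ is of maximal class.
\end{itemize}

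\emph{The case $k = 4$.} Here $G$ is a $2$-group and $Z := \Omega_1(Z(G))$ has order $2$ or $4$ by Lemma~\ref{normaliser}~(iv). I will apply Lemma~\ref{centrereduction} to $\bar G = G/Z$, after first checking separately the small cases with $|\Omega| \le 4|Z|$, where $|\Omega|\le 16$. The lemma gives:
\begin{itemize}
\item $|\bar G_{\bar\alpha}| = |G_\alpha|$;
\item $\bar G$ acts faithfully with fixity at most $4/|Z| \cdot$(small), more precisely fixity $\le 2$ when $|Z| = 2$ by part~(iii);
\item the bound $|\bar H| \mid 4$ for subgroups with many fixed points.
\end{itemize}
Using the already proved cases $k \le 2$ for the nilpotent group $\bar G$ (or regularity), $\bar G$ is dihedral or semi-dihedral with $|\bar G_{\bar\alpha}| \le 2$, or $|G_\alpha| \le 4$ directly. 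Combining these with part~(iii) should yield $|G_\alpha| \le 8$. Then $G$ is a central extension by $Z$ of a group of sectional $2$-rank at most $2$, which gives sectional $2$-rank at most $2 + 2 = 4$.

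\emph{Main obstacle.} I expect the hardest step to be the $k = 4$ case. The difficulty is making the centre reduction precise: controlling the possibly non-faithful or regular quotient when $|\bar G_{\bar\alpha}|$ has fixity $0$ or $1$ (which cannot occur for a $2$-group unless $\bar G_{\bar\alpha}=1$, a case that must be excluded), and pinning down the bound $|G_\alpha| \le 8$ rather than $16$. There I would first try to exploit $|N_G(G_\alpha) : G_\alpha| \le 4$ together with the fact that fixed-point counts are powers of~$2$.
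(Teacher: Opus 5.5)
\paragraph{The gap: the sectional $2$-rank bound for $k=4$.}
You read Lemma~\ref{centrereduction}(iii) as saying that $\bar G=G/Z$ has fixity at most $2$ when $|Z|=2$. It does not say that. Part (iii) only bounds the order of subgroups $\bar H$ (with $H\le G_\omega$) that fix more than $4/|Z|$ points of $\bar\Omega$. An involution of $G_\omega$ can fix two $Z$-orbits pointwise and swap the points of two further $Z$-orbits, so its image fixes four $Z$-orbits.

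This really occurs. Take $G=C_4\wr C_2$ acting on the cosets of the swapping involution; it has fixity $4$ by Lemma~\ref{RegNSG}, and $Z=\Omega_1(Z(G))$ has order $2$. Then $\bar G$ acts with fixity $4$ on $8$ points, and it has an abelian subgroup $C_4\times C_2$ of index $2$, so it is neither dihedral nor semi-dihedral.

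For $|Z|=4$ you have no control over $\bar G$ at all. Take the Sylow $2$-subgroup $S=V:\langle\varphi^3\rangle$ of Example~\ref{ExampleFix4} with $n\ge 3$. Then $Z(S)=\Omega_1(V)\cong E_4$, while $S/Z(S)\cong (C_{2^{n-1}}\times C_{2^{n-1}}):C_2$ again acts with fixity $4$ and contains $E_8$. Your additive estimate therefore yields at best $3+2=5$.

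So the claim that $G$ is a central extension of a group of sectional $2$-rank at most $2$ is false, and centre reduction alone does not give the bound $4$. This is exactly where the paper uses a deep external result: a $2$-group has no strongly embedded subgroup, so the main theorem of \cite{Ronse} applies and gives sectional $2$-rank at most $4$.

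\paragraph{The other steps can be repaired, but not as written.}
\emph{Reduction to a $p$-group.} This is a one-liner, and it is the paper's argument: for nilpotent $G$, every prime in $\pi(G)$ divides $|Z(G)|$, and $|Z(G)|$ divides $k$ by Lemma~\ref{normaliser}(iv). Your version has slips. With $p=2$ and $|Q|=3$ you already have $6\mid |Z(G)|$. When both $2$ and $3$ divide $|G_\alpha|$, your two orbit bounds only give $|\Omega|\le k^2$, which is not a contradiction.

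\emph{The cases $k\in\{2,3\}$.} The claim $|C_G(t)|=p^2$ is asserted rather than derived; Lemma~\ref{normaliser}(i) gives index at most $p$, not equal to $p$. Argue instead as follows.
\begin{enumerate}
\item Every $x\in G_\alpha^\#$ satisfies $\FO(x)=\FO(G_\alpha)$, a set of size $p$.
\item Hence $G_\alpha$ is semi-regular on the remaining $|\Omega|-p\equiv -p\pmod{p^2}$ points, so $|G_\alpha|=p$.
\item Then $C_G(t)=\langle t\rangle\times Z(G)$, and Suzuki's theorem gives maximal class.
\end{enumerate}
This is a self-contained alternative to the paper's citations of \cite{HW} and \cite{MW3corr}.

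\emph{The bound $|G_\alpha|\le 8$.} This does follow from centre reduction, by induction on $|G|$. Take $Z_0\le Z(G)$ of order $2$.
\begin{enumerate}
\item If $|\Omega|\ge 16$, then $\bar G=G/Z_0$ is a $2$-group acting faithfully, transitively and non-regularly on at least $8$ points, hence with fixity $2$ or $4$. So $|G_\alpha|=|\bar G_{\bar\alpha}|\le 8$ by the case $k=2$ or by induction.
\item If $|\Omega|=8$ and $|G_\alpha|\ge 16$, then $G$ is a Sylow $2$-subgroup of $\Sym_8$, which contains a transposition with six fixed points.
\end{enumerate}
This would replace the paper's appeal to \cite{SylowFix4}. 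Nothing comparable is visible for the sectional rank, and that step as you wrote it is false.
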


\begin{proof}
	Since $G$ is nilpotent, $Z(G)$ is non-trivial and $|Z(G)|$ is divisible by every prime
	divisor of $|G|$. %
	By Lemma~\ref{normaliser}~(iv), %
	$|Z(G)|$ divides $k$, in particular $k>1$.
	
	First suppose that $k=2$. Then $Z(G)$ is a $2$-group and therefore $G$ is a $2$-group. Since there exists a non-trivial element in $G$ with exactly
	two fixed points, $|G:G_{\alpha}|=|\Omega|\geq 4$. Then by Lemma~3.2
	in~\cite{HW}, $G$ is
	dihedral or semi-dihedral and $G_\alpha$ has order 2 or index 2.
    If we assume for a contradiction that $|G_\alpha| \neq 2$, then 
    $2=|G:G_{\alpha}|=|\Omega|$, contrary to the fact that $G$ acts faithfully on $\Omega$.

	Next suppose that $k=3$. Then $Z(G)$, and hence $G$, is a $3$-group. In particular, both $|G_{\alpha}|$ and $|\Omega|=|G:G_{\alpha}|$ are
	divisible by $3$. Even more, since $G$ contains a non-trivial element with three fixed points, we see that $|\Omega|>3$. Now
	$|\Omega|$ is divisible by $9$ and therefore $|G| \ge 27$.
    This means that Case (3) of 	
	Lemma~3.2 in~\cite{MW3corr} holds, hence $G$ has maximal class. 
	If $|G_{\alpha}|$ is divisible by $9$, then $0 \equiv |\Omega|\equiv |\fix_{\Omega}(G_{\alpha})| =3\mod 9$, which is impossible.
	As a consequence, $|G_{\alpha}|=3$ as stated.
	
	Finally, suppose that $k=4$.
	Then $G$ is a $2$-group by Lemma~\ref{normaliser} (iv), and there exists an involution $t\in G$ with exactly four fixed points. The fact that $G$ is a $2$-group also implies that $G$ does not have a strongly embedded
	subgroup, which together with the main theorem
	in~\cite{Ronse} shows that $G$ has sectional $2$-rank at most $4$. Moreover, $G$ acts faithfully and with fixity $4$, which forces $|\Omega|\ge 6$ and hence $|G : G_\alpha| > 4$. Then Lemma 9 in \cite{SylowFix4} gives that $|G_\alpha| \leq 8$.
\end{proof}

We distinguish cases depending on whether the Fitting subgroup  acts semi-regularly or not.
We start with the situation where the Fitting subgroup does not act semi-regularly, which will turn out to be the case that needs most work. 

\subsection{$F(G)$ not acting semi-regularly.}

\begin{lemma}\label{FfixedE1} 
    Suppose that Hypothesis \ref{hyp} holds.
	Suppose that $F(G)\cap G_{\alpha}\neq 1$. Then $E(G)=1$.
\end{lemma}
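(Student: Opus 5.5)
The plan is to exploit the fact that the layer centralises the Fitting subgroup: $[F(G),E(G)]=1$ (see \cite[Section~31]{Aschbacher}). Once we have an element of $F(G)$ fixing $\alpha$, all of $E(G)$ lies in its centraliser. Lemma~\ref{normaliser}~(ii) then bounds the index of the relevant point stabiliser in that centraliser by $4$. Remark~\ref{CompInd}~(a) forbids subgroups of index at most $4$ in $E(G)$, and this gives the contradiction.

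Concretely, suppose for a contradiction that $E := E(G) \neq 1$, and pick $x \in F(G) \cap G_\alpha$ with $x \neq 1$. Since $E$ centralises $F(G)$, we have $E \le C_G(x)$. By Lemma~\ref{normaliser}~(ii), $|C_G(x):C_{G_\alpha}(x)| \le 4$. Intersecting with $E$ gives
\[
|E : E \cap G_\alpha| = |E : E \cap C_{G_\alpha}(x)| = \frac{|E\, C_{G_\alpha}(x)|}{|C_{G_\alpha}(x)|} \le |C_G(x) : C_{G_\alpha}(x)| \le 4 .
\]
Here the first equality uses $E \le C_G(x)$, so that $E\cap G_\alpha = E \cap C_{G_\alpha}(x)$. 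The middle step uses that $E\,C_{G_\alpha}(x)$ is a subset of $C_G(x)$, because both factors lie in $C_G(x)$.

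By Remark~\ref{CompInd}~(a), $E$ has no proper subgroup of index at most $4$. Hence $E_\alpha = E$. This contradicts the second paragraph of Remark~\ref{CompInd}~(a), which shows $E \neq E_\alpha$ under Hypothesis~\ref{hyp}. Alternatively, $E \trianglelefteq G$ fixing $\alpha$ fixes every point by transitivity, which contradicts faithfulness. Therefore $E(G)=1$.

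There is no real obstacle here. The only point to check is the index inequality displayed above, which holds because $E\,C_{G_\alpha}(x)$ is a set of size $|E|\,|C_{G_\alpha}(x)|/|E\cap C_{G_\alpha}(x)|$ contained in $C_G(x)$.
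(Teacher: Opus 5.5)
Your proposal is correct and follows the paper's proof essentially verbatim. Both arguments place $E(G)$ in $C_G(x)$ for a non-trivial $x\in F(G)\cap G_\alpha$, bound $|E(G):E(G)\cap G_\alpha|$ by $4$ via Lemma~\ref{normaliser}~(ii), and then use Remark~\ref{CompInd}~(a) together with faithfulness and transitivity to conclude. Your explicit justification of the index inequality via the product $E\,C_{G_\alpha}(x)\subseteq C_G(x)$ is a detail the paper leaves implicit.
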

\begin{proof}
	Let $b\in F(G)\cap G_{\alpha}$ be non-trivial.
	Since $E(G)\leq C_G(b)$, Lemma~\ref{normaliser}~(ii) implies
	$|E(G):E(G) \cap G_\alpha| \leq |C_G(b):C_{G_{\alpha}}(b)| \leq 4$. Thus by Remark~\ref{CompInd}~(a),
	$E(G)\leq G_{\alpha}$ and the faithful and transitive action of~$G$ implies $E(G)=1$.
\end{proof} %

\bigskip
\noindent
Further we will distinguish between whether the Fitting subgroups acts faithfully on its orbits or not.

The next lemma show that faithful action can be restated in terms of the centre of the Fitting subgroup.

\bigskip
\begin{lemma}\label{Ffaithful}
	Suppose that Hypothesis \ref{hyp} holds. Then $Z(F(G)) \cap G_\alpha = 1$ if and only if $F(G)$ acts faithfully on $\alpha^{F(G)}$.
\end{lemma}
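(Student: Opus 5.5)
Write $F := F(G)$. The action of $F$ on $\alpha^F$ has kernel $K := \bigcap_{f\in F} (F\cap G_\alpha)^f$, which is the core of $F_\alpha := F \cap G_\alpha$ in $F$. So the claim is that $K = 1$ if and only if $Z(F)\cap G_\alpha = 1$. I will prove the two implications separately, using that $F$ is nilpotent and that $G$ acts with fixity $4$.

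\textbf{Faithful implies central intersection trivial.} Suppose $F$ acts faithfully on $\alpha^F$, and let $z \in Z(F)\cap G_\alpha$. Since $z$ is central in $F$, it fixes every point $\alpha^f$ with $f\in F$, because $\alpha^{fz}=\alpha^{zf}=\alpha^f$. Hence $z \in K = 1$, so $Z(F)\cap G_\alpha = 1$.

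\textbf{Central intersection trivial implies faithful.} Conversely, suppose $Z(F)\cap G_\alpha = 1$ and assume for a contradiction that $K \neq 1$. Then $K$ is a non-trivial normal subgroup of $F$. Since $F$ is nilpotent, every non-trivial normal subgroup of $F$ meets $Z(F)$ non-trivially. Therefore $1 \neq K\cap Z(F) \le F_\alpha \cap Z(F) \le G_\alpha\cap Z(F) = 1$, which is a contradiction.

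This argument never uses the fixity hypothesis, only that $F$ is nilpotent. There is no real obstacle; the one point needing care is identifying the kernel of $F$ on $\alpha^F$ with the $F$-core of $F_\alpha$, which is standard. A second possible reading takes ``faithful'' to mean faithful on every $F$-orbit. Lemma~\ref{normalaction} makes all $F$-orbits isomorphic as permutation groups, and the argument applies uniformly to $\beta^F$ with $G_\beta$ conjugate to $G_\alpha$. Since $Z(F)$ is normal in $G$, the condition $Z(F)\cap G_\beta = 1$ is equivalent to $Z(F)\cap G_\alpha=1$, so both readings give the same statement.
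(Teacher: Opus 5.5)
Your proposal is correct and follows essentially the same argument as the paper. A central element of $F(G)$ fixing $\alpha$ lies in the kernel of $F(G)$ on $\alpha^{F(G)}$. Conversely, a non-trivial kernel is a normal subgroup of the nilpotent group $F(G)$, so it meets $Z(F(G))$ non-trivially. As in the paper, the fixity hypothesis is not needed.
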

\begin{proof}
	Consider the action of $F := F(G)$ on $\alpha^F$. If $Z(F) \cap F_\alpha \neq 1$, then $Z(F) \cap F_\alpha$ is a non-trivial normal subgroup of $F$ which is contained in $F_\alpha$. Therefore it is contained in $F_\omega$ for all $\omega \in \alpha^F$, and it follows that $F$ does not act faithfully on $\alpha^F$. Conversely, if $F$ does not act faithfully on $\alpha^F$, then the kernel $K$ of this action is a non-trivial normal subgroup of $F$. Then $Z(F) \cap K \neq 1$, because $F$ is nilpotent. Thus $1 \neq Z(F) \cap K \leq Z(F) \cap G_\alpha$.
\end{proof}

\bigskip\noindent
We start with the case where the Fitting subgroup acts faithfully on its orbits. It turns out that this is only possible if the Fitting subgroup itself acts with fixity $4$.

\bigskip
\begin{lemma}\label{FixityFG}
	Suppose that Hypothesis \ref{hyp} holds, that $F(G) \cap G_\alpha \neq 1$ and that $F(G)$ acts faithfully on $\alpha^{F(G)}$. Then $F(G)$ acts with fixity $4$ on $\alpha^{F(G)}$.
	In particular, $F(G)$ is a $2$-group of sectional $2$-rank at most $4$.
\end{lemma}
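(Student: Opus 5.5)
The plan is to determine the exact fixity $k$ of $F := F(G)$ on $\alpha^F$ and then rule out every value other than $4$.

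\textbf{Setting up.} Since $F \cap G_\alpha = F_\alpha \neq 1$ and every non-trivial element of $G$ fixes at most four points of $\Omega$, the group $F$ acts on $\alpha^F$ with some fixity $k \le 4$. This action is transitive, and it is faithful by hypothesis. Faithfulness also gives $|\alpha^F| > k$, since otherwise a non-trivial element of $F_\alpha$ would fix all of $\alpha^F$. So Lemma~\ref{nilpotent4} applies to $(F,\alpha^F)$. This yields $k \ge 2$, and one of three cases:
\begin{itemize}
\item $k=2$, $F$ is a dihedral or semi-dihedral $2$-group and $|F_\alpha| = 2$;
\item $k=3$, $F$ is a $3$-group of maximal class and $|F_\alpha| = 3$;
\item $k=4$, which is what we want.
\end{itemize}
In the case $k=4$, Lemma~\ref{nilpotent4}(3) also gives the ``in particular'' part: $F$ is a $2$-group of sectional $2$-rank at most $4$. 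It therefore remains to exclude $k \in \{2,3\}$.

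\textbf{Excluding $k\in\{2,3\}$.} In both remaining cases $P := F_\alpha$ has prime order $p$, and $P = F \cap G_\alpha \unlhd G_\alpha$. I would analyse $\FO(P)$ using $N_G(P)$.
\begin{enumerate}
\item Let $\beta \in \FO(P)$. Then $P \le F_\beta$. By Lemma~\ref{normalaction} all point stabilisers in $F$ have order $p$, so $P = F_\beta$.
\item Writing $\beta = \alpha^g$, we get $F_\beta = P^g$, so $g \in N_G(P)$. Hence $N_G(P)$ is transitive on $\FO(P)$.
\item The group $N_F(P)$ is normal in $N_G(P)$, so it splits $\FO(P)$ into orbits of equal length.
\item Each such orbit is $\FO(P) \cap \beta^F$ for an $F$-orbit $\beta^F$, and has length $|N_F(P):P| = k$ by Lemma~\ref{numberfpt} applied inside $F$.
\end{enumerate}
Therefore $|\FO(P)| = k\cdot m$, where $m$ is the number of $F$-orbits meeting $\FO(P)$. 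Fixity $4$ then forces $m = 1$, or $k = 2$ and $m = 2$.

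\textbf{Contradiction, and the main obstacle.} The remaining step is to derive a contradiction from this very restricted shape. This is where I expect the real work.
\begin{itemize}
\item \emph{Case $k=3$.} Here $P$ fixes exactly three points, so $|N_G(P):N_{G_\alpha}(P)| = 3$ by Lemma~\ref{normaliser}(i). Now $F$ is a $3$-group of maximal class with $|N_F(P):P| = 3$. I would compare $|\Omega| \equiv |\FO(P)| \pmod 3$ with the fact that $F$, a $3$-group, has orbits of length $|F:P|$ divisible by $9$. Then I would use that an element of $G$ with four fixed points must exist, and show such an element centralises a $3$-element with too many fixed points, via Lemma~\ref{normaliser}(ii) and $|Z(G)|$-type arguments.
\item \emph{Case $k=2$.} Here $F$ is dihedral or semi-dihedral, so $\aut(F)$ is a $2$-group. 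Hence $G/C_G(F)F$ is a $2$-group, and $C_G(F) \le F$ because $E(G)=1$ by Lemma~\ref{FfixedE1}, so $F^*(G) = F$. Thus $G$ is a $2$-group, hence nilpotent, so $F = G$ and the fixity of $F$ would be $4$, a contradiction.
\item \emph{Case $k=3$, using the same structural fact.} The same argument applies: $F^*(G) = F$, so $G/F$ embeds in $\mathrm{Out}(F)$. For a maximal-class $3$-group, $\mathrm{Out}(F)$ is a $\{2,3\}$-group with small $2$-part. I would then try to show directly that no element of $G \setminus F$ can reach four fixed points while $F$ has fixity $3$, using the structure of $\FO(P)$ above.
\end{itemize}
The control of $\mathrm{Out}(F)$ in the case $k=3$ is the step I expect to be hardest.
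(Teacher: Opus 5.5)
Your set-up and your treatment of $k=2$ match the paper's argument. In that case you should note that $|F| = 2|\alpha^F| \ge 8$, since $\aut(E_4)\cong\Sym_3$ is not a $2$-group. The genuine gap is the case $k=3$. This is the real content of the lemma, and you only describe it as a list of intentions.

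\begin{itemize}
\item Your analysis of $\FO(P)$ is correct, but it only yields $|\FO(P)|=3$.
\item The congruence $|\Omega|\equiv|\FO(P)| \pmod 3$ is then vacuous, because $|\Omega|$ is a multiple of $|\alpha^F|$, which is divisible by $9$.
\item The route through $\mathrm{Out}(F)$ cannot be closed by order considerations either. For $F\cong 3^{1+2}_+$ you have $\mathrm{Out}(F)\cong\GL_2(3)$, so nothing prevents $G/F$ from being a non-trivial $2$-group. You would still need a fixed point argument, which you do not supply.
\end{itemize}

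The missing idea is to exploit $Z:=Z(F)$, which has order $3$ and acts semi-regularly on $\Omega$ (by Lemma~\ref{Ffaithful} plus normality).
\begin{itemize}
\item Every element of $C:=C_G(Z)$ has a number of fixed points divisible by $3$. So the pointwise stabiliser $H\le G_\alpha$ of the fixed point set of an element with four fixed points satisfies $H\cap C=1$.
\item Since $|G:C|\le 2$, this gives $|H|=|G:C|=2$ and $G=HC$. In particular $|\Omega|$ is even.
\item Take $S\in\Syl_2(G)$ containing $H$ and an involution $t\in Z(S)\cap C$; it exists because $C$ is transitive on a set of even size. The number of fixed points of $t$ is divisible by $6$ and at most $4$, so $t$ is fixed point free.
\item Since $F$-orbits have odd length, $t$ interchanges $\alpha^F$ and $\alpha^{tF}$.
\item Set $\Delta:=\alpha^F\cup\alpha^{tF}$, of size $\equiv 2 \pmod 4$, and let $H=\<h\>$. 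Both $t$ and $ht$ act on $\Delta$ as fixed point free involutions, hence as odd permutations. So $h$ is even, and therefore has exactly two fixed points on $\Delta$, one in each $F$-orbit.
\item A non-trivial element of $C_F(h)$ would fix at least three points in each of these two orbits, so $C_F(h)=1$.
\item Hence the involution $h$ inverts $F$, so $F$ is abelian. This contradicts $|Z(F)|=3<|F|$.
\end{itemize}
Without this argument, or an equivalent one, your proposal does not establish the lemma.
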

\begin{proof}
	Let $F:=F(G)$. By Lemma~\ref{FfixedE1}, $E(G)=1$.
	Thus by Theorem 6.5.8 in \cite{KS}, we have that $C_{G}(F)\leq F$, and hence $G/F$ is isomorphic to a section of the automorphism group of $F$.

	Since $F_\alpha \neq 1$ by our hypothesis, we conclude that $F$ acts transitively and non-regularly on $\alpha^F$. The action is also faithful, by the hypothesis of this lemma, and we have fixity  at most $4$ by our global hypothesis. Since $F$ is nilpotent, it is not a Frobenius group, and hence it cannot act with fixity $1$.
	
	Assume for a contradiction that $F$ acts with fixity $2$ on $\alpha^F$.
	Then by Lemma~\ref{nilpotent4}, $F$ is a dihedral or semi-dihedral $2$-group of order $|\alpha^F| \cdot |F_\alpha| \geq 4 \cdot 2 = 8$. In particular, the
	auto\-mor\-phism group of $F$ is also a $2$-group (see Theorem 34.8
	in~\cite{Berkovich}). However, then $G$ is
	a $2$-group and hence $G=F$. This contradicts the fact that $G$ acts with fixity~$4$, but $F$
	acts with fixity $2$ on $\alpha^F=\alpha^G=\Omega$.
	
	Next, assume for a contradiction that $F$ acts with fixity $3$ on $\alpha^F$. Then by
	Lemma~\ref{nilpotent4}, $F$ is a $3$-group of maximal class and by Lemma \ref{normaliser}~(iv), $Z := Z(F)$ has order $3$.
	Let $C := C_G(Z)$. Then $|G:C| = |N_G(Z):C_G(Z)| \leq 2$ and Lemma \ref{normaliser} (iv) yields that $C$ acts with fixity $3$ on $\alpha^C$. Let $H \leq G_\alpha$ be a four point stabiliser. Then $C \cap H = 1$, and we conclude that $|G:C| = |H| = 2$. Thus $G = HC$ and  $\Omega = \alpha^G = \alpha^{H C} = \alpha^C$. Hence $C$ acts transitively on $\Omega$. 
    Since $|C:C_\alpha| = |\Omega| \equiv 4$ modulo $|H| = 2$, it follows that $|C|$ and $|\Omega|$ are even. Let $S \in \Syl_2(G)$ be such that $H \leq S$. Then $1 \neq S \cap C \trianglelefteq S$ and there exists an involution $t \in Z(S) \cap C$. Let $V := H \times \<t\> \cong C_2 \times C_2$. Since $[Z,t] = 1$, $Z$ acts semi-regularly on $\FO(t)$ and $|\FO(t)|$ is divisible by $3$. At the same time, $|\Omega|$ and therefore $|\FO(t)|$ is even. Together with the fixity $4$ hypothesis, we obtain that $t$ acts fixed point freely on $\Omega$. Since $|\alpha^F|$ is odd, this is only possible if $\alpha^{tF} \neq \alpha^F$.
	Consider the action of $F:V$ on $\Delta := \alpha^{FV} = \alpha^{H\<t\>F} = \alpha^{\<t\>F}$. Then $|\Delta| = 2 \cdot |\alpha^F|$ has twice odd size and $t$ induces an odd permutation on it. Since $|V| = 4 > 2$, there must exist an involution $s \in V$ that induces an even permutation. The fact that $|\Delta| \equiv 2 \mod 4$, together with the fixity $4$ hypothesis, implies that $|\fix_{\Delta}(s)| = 2$. This is only possible if $|\fix_{\alpha^F}(s)| = |\fix_{\alpha^{tF}}(s)| = 1$. Every element $y \in C_F(s)$ fixes $\fix_{\alpha^F}(s)$ and $\fix_{\alpha^{tF}}(s)$ point-wise, and therefore it fixes at least three points in both $F$-orbits in total. This is only possible if $y = 1$, and we deduce that $C_F(s) = 1$. According to Lemma 8.1.10 in \cite{KS}, $s$ inverts all elements in $F$, and thus $F$ is abelian. But $|Z(F)| = 3 < |F|$, which contradicts our assumption that $F$ acts with fixity $3$.
	Therefore, $F$ acts with fixity $4$ on $\alpha^F$, as stated.  By Lemma~\ref{nilpotent4}, $F$ is a $2$-group of sectional $2$-rank at most $4$.
\end{proof}

We turn our attention to the point stabilisers. Under the hypothesis that the Fitting subgroup acts with fixity $4$, we will see that the point stabilisers are small $\{2,3\}$-groups. For this, we need a result about the existence of certain characteristic subgroups.

\begin{lemma}\label{2groupfix4}
	Suppose that $G$ is a $2$-group that acts transitively, faithfully and with fixity $2$ or $4$ on $\Omega$. Then there exists a characteristic subgroup $N$ of $G$ such that $Z(G) \leq N$ and such that $N$ acts semi-regularly on $\Omega$ with either exactly two or exactly four orbits.
\end{lemma}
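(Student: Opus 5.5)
Since $G$ is a $2$-group acting faithfully with fixity $2$ or $4$, Lemma~\ref{normaliser}~(iv) and (vi) show that $Z := Z(G)$ is a non-trivial $2$-group of order dividing $4$ that acts semi-regularly. The plan is to find a characteristic subgroup $N \geq Z$ with $N \cap G_\omega = 1$ for all $\omega$ and $|G:G_\alpha N| \in \{2,4\}$, since $N \unlhd G$ then has exactly $|\Omega|/|N| = |G:G_\alpha N|$ orbits. Put $k := $ the fixity and let $t \in G_\alpha$ be an element (which we may take to be an involution, replacing it by a suitable power) with exactly $k$ fixed points. Because $Z$ commutes with $t$, $Z$ acts semi-regularly on $\FO(t)$, so $|Z|$ divides $|\FO(t)|$.

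\textbf{First attempt: $N := Z$.} If $|\Omega|/|Z| \in \{2,4\}$ we are done. Otherwise $|\Omega| > k\cdot|Z|$ or $|\Omega|/|Z|=1$; the latter gives $G = Z$ abelian, hence regular, which contradicts fixity $k\geq 2$. So Lemma~\ref{centrereduction} applies, and $\bar G = G/Z$ acts faithfully and transitively with fixity at most $k$ on $\bar\Omega$, with $|\bar G_{\bar\alpha}| = |G_\alpha|$. I then iterate: take $N$ to be the full preimage of a suitable characteristic subgroup of $\bar G$, for instance the preimage $Z_2(G)$ of $Z(\bar G)$, and continue up the upper central series. Each term $Z_i(G)$ is characteristic. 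The claim to prove is that as long as $Z_i(G)$ acts semi-regularly, the number of $Z_i(G)$-orbits $|\Omega|/|Z_i(G)|$ is a power of $2$. This number decreases, and the process stops at the first $i$ with $Z_{i+1}(G) \cap G_\alpha \neq 1$. I want to show that at that stopping point $|\Omega|/|Z_i(G)| \le 4$; since it must be at least $2$ (otherwise $G = Z_i(G)G_\alpha$ would force $G_\alpha$ to be normal, hence trivial), it is then exactly $2$ or $4$.

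\textbf{Key step.} Let $M := Z_i(G)$ be semi-regular with more than $4$ orbits, and suppose $Z(G/M)$ meets $G_\alpha M/M$ non-trivially. Choose $x \in G_\alpha$ of order $2$ with $xM \in Z(G/M)$. Then $[x,G] \le M$, so $x$ normalises every $M$-orbit, namely $x$ maps each $M$-orbit $\beta^M$ to $\beta^{xM} = \beta^{Mx} = \beta^M$. On each orbit $\beta^M$, the group $M\langle x\rangle$ acts with point stabiliser of order $2$, so $x$ is conjugate in $M\langle x\rangle$ to an element fixing a point of $\beta^M$. Hence some element of $xM$ fixes a point in every one of the $M$-orbits. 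Since the coset $xM$ is invariant under conjugation by $G$ and $G$ permutes the $M$-orbits transitively, one would like to conclude that a single element fixes points in more than $4$ orbits, which contradicts fixity $\le 4$.

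\textbf{Main obstacle.} This pigeonhole step does not directly yield a single element. My fallback is a counting argument with Lemma~\ref{numberfpt} applied inside $M\langle x\rangle$. The fixed points of the elements of $xM$ on one $M$-orbit (of size $|M|$, with involutory stabilisers) total $|M|$ over the coset $xM$, which has $|M|$ elements. Summing over all $r := |\Omega|/|M| > 4$ orbits gives average fixed-point number $r > 4$ over $xM$. Hence some element of $xM$ has more than $4$ fixed points, and that element is non-trivial since $xM \cap M = \emptyset$. This contradiction settles the key step and proves the lemma.
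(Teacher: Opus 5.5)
Your key step is correct. Averaging over the coset $xM$ shows that no involution of $G_\alpha$ can stabilise every orbit of a semi-regular normal subgroup $M$ with more than four orbits, and it uses only that property of $x$, not $[x,G]\le M$. This makes your argument the paper's induction unrolled. The paper passes to $G/Z(G)$ via Lemma~\ref{centrereduction} and inducts, and in effect produces $N=Z_j(G)$ for the first $j$ with $|\Omega|/|Z_j(G)|\le 4$. Your counting replaces the faithfulness part of Lemma~\ref{centrereduction}.

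The gap is your claim that at the stopping index $i$ there are at least two orbits. You argue that $G=Z_i(G)G_\alpha$ forces $G_\alpha\trianglelefteq G$. That is immediate for $i=1$, since central elements normalise $G_\alpha$. For $i\ge 2$, however, elements of $Z_i(G)$ need not normalise $G_\alpha$. For example, take $D_{16}=\langle r,s\rangle$ acting on the vertices of an octagon: $r^2\in Z_2(G)$, but $s^{r^2}=sr^4$, so $r^2$ does not normalise $\langle s\rangle$. So your stated reason does not justify excluding a regular $Z_i(G)$. In the paper this case never arises, because Lemma~\ref{centrereduction} keeps the quotient action non-regular at every stage of the induction.

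You can close the gap with your own counting argument. Suppose $i\ge 2$ and $Z_i(G)$ is regular. Since $Z_i(G)/Z_{i-1}(G)$ is central in $G/Z_{i-1}(G)$ and $G=Z_i(G)G_\alpha$, the subgroup $G_\alpha Z_{i-1}(G)$ is normal in $G$. It is the setwise stabiliser of the block $\alpha^{Z_{i-1}(G)}$. Being normal, it stabilises every $Z_{i-1}(G)$-orbit, so $G_\alpha$ does too. Now apply your averaging argument to $M:=Z_{i-1}(G)$ and an involution of $G_\alpha$. If $Z_{i-1}(G)$ has more than four orbits, this gives a contradiction. Otherwise $N:=Z_{i-1}(G)$ already works, with $|Z_i(G):Z_{i-1}(G)|\in\{2,4\}$ orbits.

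The cleanest organisation is to let $j$ be the first index with $|\Omega|/|Z_j(G)|\le 4$ from the start. Your key step at each level $l<j$ then gives semi-regularity of $Z_{l+1}(G)$. At level $j-1$ (or via the centre, if $j=1$) it excludes a single orbit.
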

\begin{proof}
	Assume for a contradiction that $G$ is a counterexample of minimal order and let $Z := Z(G)$. By Lemma \ref{normaliser}\,(iv) and (vi), $Z$ has order $2$ or $4$ and it acts semi-regularly on $\Omega$. Let $\bar \Omega$ denote the set of $Z$-orbits on $\Omega$.

	Assume that $|\bar \Omega| \leq 4$. Then $|\bar \Omega| > 1$ because otherwise $|\Omega| = |\alpha^Z| = |Z| \in \{2,4\}$, in which case $G$ cannot act with fixity 2 or 4.   Since $|\Omega|$ is a power of $2$ and all $Z$-orbits have the same length, it follows that $|\bar \Omega| \in \{ 2, 4\}$. In particular, $N:=Z$ has all the required properties  and $G$ is not a counterexample.

	 Therefore $|\bar \Omega| > 4$. Then $|\Omega| = |Z| \cdot |\bar \Omega| > |Z| \cdot 4$ and we can apply Lemma \ref{centrereduction}. It states that $\bar G := G / Z$ acts transitively, non-regularly, faithfully and with fixity at most $4$ on $\bar \Omega$. Since $\bar G$ is a $2$-group, this is only possible if $\bar G$ acts with fixity $2$ or $4$, by Lemma \ref{normaliser}\,(iv). 
	Since $G$ is a minimal counterexample, we can apply the lemma to the action of $\bar G$ on $\bar \Omega$ and we find a characteristic subgroup $\bar N \leq \bar G$ that acts semi-regularly on $\bar \Omega$ with exactly two or four orbits. Then the full pre-image $N \leq G$ of $\bar N$ satisfies the conditions listed in the statement and therefore $G$ is not a counterexample.
\end{proof}

\begin{lemma}\label{2^3.3^2}
	Suppose that Hypothesis \ref{hyp} holds and that $F(G)$ acts faithfully with fixity $4$ on $\alpha^{F(G)}$. Then $|G_\alpha|$ divides $2^3 \cdot 3^2$.
\end{lemma}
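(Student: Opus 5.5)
We split $|G_\alpha|$ into its intersection with $F:=F(G)$ and its image in $G/F$. Throughout, $F$ acts faithfully with fixity $4$ on $\Delta:=\alpha^F$. By Lemma~\ref{nilpotent4}, $F$ is a $2$-group and $|F_\alpha|\le 8$, so $|F\cap G_\alpha|$ divides $2^3$. Also $F_\alpha\neq 1$, because some element of $F$ fixes four points. Hence Lemma~\ref{FfixedE1} gives $E(G)=1$ and $C_G(F)\le F$. It therefore suffices to show that $|G_\alpha:F_\alpha|=|G_\alpha F:F|$ divides $2^3\cdot 3^2$ and that the $2$-parts combine to at most $2^3$. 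The plan is to show that $G_\alpha/F_\alpha$ embeds into a small group of permutations of a bounded set on which $F_\alpha$ acts trivially.

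\textbf{Step 1 (the key reduction).} Apply Lemma~\ref{2groupfix4} to $F$ acting on $\Delta$. This gives a characteristic subgroup $N$ of $F$ with $Z(F)\le N$ such that $N$ is semi-regular on $\Delta$ with exactly two or four orbits. Since $N$ is characteristic in $F\unlhd G$, we have $N\unlhd G$, so $G_\alpha$ permutes the $N$-orbits on $\Omega$. Let $\Gamma$ be the set of $N$-orbits contained in $\Delta$; then $|\Gamma|\in\{2,4\}$ and $N_{G}(\Delta)$ acts on $\Gamma$. Because $N$ is semi-regular, $N_\alpha=1$, and the group $NG_\alpha$ has the $N$-orbit $\alpha^N$ as a block.

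\textbf{Step 2 (bounding the kernel).} Let $K$ be the kernel of the action of $G_\alpha$ on $\Gamma$. Then $K$ fixes $\alpha$ and fixes each $N$-orbit in $\Delta$ setwise. I would show that $K$ is a $2$-group of order at most $8$ as follows. For $x\in K$ of odd prime order, $x$ normalises $N$ and fixes $\alpha$. Consider the action of $N\langle x\rangle$ on each $N$-orbit in $\Gamma$: since $N$ is regular on such an orbit, the fixed points of $x$ there correspond to $C_N(x)$-cosets, so $x$ has at least one fixed point in each $N$-orbit it stabilises. With $|\Gamma|\in\{2,4\}$, one obtains $|\fix_\Omega(x)|\ge |C_N(x)|\cdot|\Gamma|$, and fixity $4$ forces $C_N(x)$ to be small. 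The aim is then to get $C_{Z(F)}(x)\ne 1$ and, via Lemma~\ref{normaliser}\,(ii), a contradiction, or to bound the odd part.

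\textbf{Step 3 (the image).} $G_\alpha/K$ embeds in $\Sym(\Gamma)\le \Sym_4$, so its order divides $2^3\cdot 3$. The remaining factor of $3$ should come from the action of the odd part of $K$ on $N$: $x$ acts fixed-point-freely on $N$ modulo a bounded $C_N(x)$, and $N$ has sectional $2$-rank at most $4$. So a $3$-element acting faithfully on $N/\Phi(N)$ of rank at most $4$ has $3$-part dividing $3^2$, while elements of prime order $p\ge5$ are excluded by Lemma~\ref{normaliser}\,(v) together with the orbit count. For the $2$-part, $F_\alpha\le K$. The plan is to show that $G_\alpha$ has a Sylow $2$-subgroup of order at most $8$ by applying the $|G_\alpha|\le 8$ argument of Lemma~\ref{nilpotent4} (via Lemma~9 of \cite{SylowFix4}) to a $2$-group $FS$ with $S\in\Syl_2(G_\alpha)$, which is nilpotent and acts with fixity $4$ on $\alpha^{FS}$.

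\textbf{Main obstacle.} I expect the hardest part to be controlling the odd part, and specifically the $3$-part, sharply enough to obtain exactly $3^2$. For this I would first try the sectional rank bound together with the fact that $3$-elements act nearly fixed-point-freely on $N$.
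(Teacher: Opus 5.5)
\textbf{There is a genuine gap.} It sits where you expected it, in the odd part of $G_\alpha$.

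\emph{The case $|\Gamma|=4$.} The Step~2 claim that $K$ is a $2$-group of order at most $8$ is false. In Example~\ref{AGammaL24} we have $|G_\alpha|=2^3\cdot 3^2$. Since $G_\alpha$ fixes the $N$-orbit $\alpha^N$, the quotient $G_\alpha/K$ embeds in $\Sym_3$ if $|\Gamma|=4$ and is trivial if $|\Gamma|=2$, so $|K|\ge 12$ there. The correct conclusion runs opposite to your aim of getting $C_{Z(F)}(x)\neq 1$:
\begin{itemize}
\item Take $x\in K$ of odd prime order. By your own count, $x$ has exactly $|C_N(x)|$ fixed points in each of the four $N$-orbits, so $C_N(x)=1$.
\item Hence every such $x$ acts fixed-point-freely on $Z(F)^\#$, which has at most three elements by Lemma~\ref{normaliser}\,(iv).
\item This excludes all primes $p\ge 5$ from $|K|$ and shows that a Sylow $3$-subgroup of $K$ has order at most $3$. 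Together with $G_\alpha/K\le \Sym_3$ this gives $3^2$, which is the paper's argument.
\end{itemize}
Your Step~3 bookkeeping, $3$ from $\Sym_4$ times $3^2$ from a Sylow $3$-subgroup of $\GL_4(2)$, only yields $3^3$. Also, Lemma~\ref{normaliser}\,(v) does not by itself exclude $p\ge 5$. It only places a Sylow $p$-subgroup inside $G_\alpha$, which is compatible with $|\alpha^F|$ being a power of $2$.

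\emph{The case $|\Gamma|=2$.} This is the more serious problem: you have no argument here at all. Then $K=G_\alpha$, so the reduction gains nothing. Moreover, odd-order elements may centralise a subgroup of order $2$ in $N$, so no contradiction arises. The paper handles this case by a minimal counterexample:
\begin{itemize}
\item First it reduces to $G=FG_\alpha$, so that $F$ is transitive. The kernel $K$ of $G$ on the two $N$-orbits then has index $2$, and $K_\alpha=G_\alpha$.
\item If the $N$-orbit $\Delta_1=\alpha^N$ has size $4$, then odd-order elements of $G_\alpha$ fix at most two points of $\Delta_1$, so they act non-trivially on it. This bounds the $3$-part by $3$ and excludes larger primes.
\item Otherwise $K$ acts faithfully on $\Delta_1$, and $F(K)\ge N$ is transitive on $\Delta_1$ and contains $F_\alpha\neq 1$. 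By Lemma~\ref{nilpotent4}, either $F(K)$ has fixity $2$, or it has fixity $4$.
\item If $F(K)$ has fixity $2$, it is dihedral or semidihedral, so $\aut(F(K))$ is a $2$-group. Since $C_K(F(K))\le F(K)$, it follows that $G_\alpha$ is a $2$-group.
\item If $F(K)$ has fixity $4$, then minimality applies to the smaller group $K$ acting on $\Delta_1$.
\end{itemize}
Some inductive step of this kind is the missing idea.

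Your treatment of the $2$-part, applying Lemma~\ref{nilpotent4} to $FS$, is fine and matches the paper.
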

\begin{proof}
	Let $G$ be a counterexample of minimal order. According to Lemma~\ref{nilpotent4}, $F := F(G)$ is a $2$-group. Since $F$ acts faithfully on $\alpha^F$ and some non-trivial element in $F$ has exactly four fixed points, we see that $|\alpha^F| > 4$. Moreover, the action of $F G_\alpha$ on $\alpha^F = \alpha^{FG_\alpha}$ is also a faithful fixity $4$ action. By the minimality of our counterexample $G$ , it follows that $G = F G_\alpha$ and $F$ acts transitively on $\Omega$.
	Let $S \in \syl_2(G)$ be such that $S$ contains a Sylow $2$-subgroup of $G_\alpha$. Then $F \leq S$ and thus $S$ acts transitively and faithfully with fixity $4$ on $\Omega = \alpha^F$. Now Lemma \ref{nilpotent4} gives $|S_\alpha| \leq 8$ and thus $2^3$ is the largest power of $2$ which could possibly divide $|G_\alpha|$. It remains to show that $G_\alpha$ is a $\{2,3\}$-group of order not divisible by $3^3$.

	Using Lemma \ref{2groupfix4}, we find a normal subgroup $N$ of $F$ such that $Z(F) \leq N$ and such that $N$ acts semi-regularly, with exactly two or four orbits. Note that $N \trianglelefteq G$ because $F$ is characteristic in $G$. Let $n \in \{2,4\}$ and let $\alpha = \delta_1,..,\delta_n \in \Omega$ be such that $\Delta_1 := \delta_1^N$, ..., $\Delta_n := \delta_n^N$ are the distinct $N$-orbits and let $K$ denote the kernel of the action of $G$ on the set of $N$-orbits. Then $G/K$ is isomorphic to a subgroup of $\Sym_4$ and $G_\alpha/K_\alpha$ is isomorphic to a subgroup of $\Sym_3$. As $|\Delta_i| = |N|$ is a power of $2$ for all $i \in \{1,...,n\}$, every $x \in K$ of odd prime order fixes at least one point in every $N$-orbit. Moreover, since $|\fix_{\Delta_1}(x)| \equiv ... \equiv |\fix_{\Delta_n}(x)|$ modulo $o(x)$ and $o(x) \geq 3$ is odd, the fixity $4$ hypothesis implies that $x$ has the same number of fixed points in every $N$-orbit. In particular, $|\fix_{\Delta_1}(x)| = |\FO(x)|/n \leq 4/n \leq 2$.

	Assume for a contradiction that $n = 4$. By our previous considerations, every non-trivial $x \in K_\alpha$ of odd order must have exactly one fixed point on every $N$-orbit. With Lemma \ref{normaliser}\,(ii), it follows that $C_N(x) = 1$ and in particular, $\<x\>$ acts semi-regularly on $Z(F(G))^\#$ by conjugation. Since $Z(F(G)) \leq N$ is a normal subgroup of $G$ of order dividing $4$ according to Lemma \ref{normaliser}\,(iv), this is only possibly if $K_\alpha$ is a $\{ 2,3 \}$-group of order not divisible by $3^2$. Since $|G_\alpha| = |K_\alpha| \cdot |G_\alpha:K_\alpha|$ and $G_\alpha / K_\alpha$ is isomorphic to a subgroup of $\Sym_3$, we see that $G_\alpha$ is a $\{ 2,3 \}$-group whose order is not divisible by $3^3$. In particular, $G$ is not a counterexample and thus $n = 2$. Now $|G:K| = 2$ and $K_\alpha = G_\alpha$.

	Assume for a contradiction that $|\Delta_1| \leq 4$. Then $|\Delta_1| = 4$ because otherwise $|\Omega| \leq 4$,  contrary to the faithful action of $G$. every non-trivial $x \in K_\alpha$ of odd order has at most two fixed points on $\Delta_1$ and must thus act non-trivially. This is only possible if $K_\alpha$ is a $\{ 2, 3 \}$-group of order not divisible by $3^2$. Then, since $K_\alpha = G_\alpha$, we see that $G$ is not a counterexample. Hence $|\Delta_1| \geq 8$.

	Consider the action of $K$ on $\Delta_1$. The fixity $4$ hypothesis, together with the fact that $|\Delta_1| \geq 8$, implies that this action is faithful. Since $N \trianglelefteq K$ is a $2$-group, we have that $N \leq F(K)$ and thus $F(K)$ acts transitively and faithfully on $\Delta_1 = \alpha^F$. Since $1 \neq F_\alpha \leq F(K) \cap K_\alpha$, we can apply Lemma \ref{nilpotent4}: ~$F(K)$ is a $2$-group that acts with fixity $2$ or $4$ on $\Delta_1$. In the first case, $F(K)$ is dihedral or semi-dihedral of order $2 \cdot |\Delta_1| \geq 16$ and therefore $\aut(F(K))$ is a $2$-group (see Theorem 34.8 in \cite{Berkovich}).
    Then it follows that $K$, and therefore $K_\alpha = G_\alpha$, are $2$-groups and $G$ is not a counterexample. 
Consequently, the second case must hold, i.e. $F(K)$ acts faithfully and with fixity $4$ on $\Delta_1$. Now the minimality of our counterexample implies that 
$|K_\alpha|$ divides $2^3 \cdot 3^2$. But $G_\alpha = K_\alpha$, which gives a contradiction. 
\end{proof}

The next example shows that the bound of $2^3 \cdot 3^2$ from the previous lemma does really occur.

\begin{ex}\label{AGammaL24}
    Let $F$ be a field of order $4$ and let $\varphi \in \Gal(F)$ be the non-trivial field automorphism of $F$ of order $2$. Let $H_0 \leq \GL_2(4)$ be the subgroup of lower triangular matrices and $H := H_0:\< \varphi \> \cong ( \Alt_4 \times C_3 ) : C_2$. Consider the natural action of $H$ on $V := F^2$ (where $\varphi$ acts component-wise).
    For all $h \in H_0^\#$, $C_V(h)$ is trivial or it is a 1-dimensional $F$-subspace of $V$ and thus $|C_V(h)| \in \{ 1, 4 \}$. Since every involution $t \in H_0$ is contained in $O_2(H_0) \in \syl_2(H_0)$ and can be written in the form $t = \begin{pmatrix} 1 \\ * & 1 \end{pmatrix}$, we see that $|C_V(t)| = 4$. By Proposition 4.9.1 (d) in \cite{GLS3}, every involution $t \in H \setminus H_0$ is $\GL_2(4)$-conjugate to $\varphi$ and thus satisfies $|C_V(t)| = 4$. With Lemma \ref{RegNSG}, it now follows that $G := V:H$ acts with fixity $4$ on $\Omega := G / H$ by right multiplication. Moreover, the Fitting subgroup $F(G) = O_2(G) = V:O_2(H_0)$ acts transitively and with fixity $4$ on $\Omega$ and the point stabilisers have order $|H| = 2^3 \cdot 3^2$.
\end{ex}

\medskip

Next, we look at the case where the Fitting subgroup does not act faithfully on its orbits. We start by showing that this is only possible if the group acts on a set of size at most $28$. We then use \texttt{GAP}, more specifically the Transitive Groups Library~\cite{TransGrp}, and in this way we can find all examples and determine the structure of the point stabilisers.

\begin{lemma}\label{ZenFG1}
    Suppose that Hypothesis \ref{hyp} holds and that $F(G)$ does not act faithfully on $\alpha^{F(G)}$. Then $|\Omega| \leq 28$.
\end{lemma}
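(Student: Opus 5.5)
By Lemma~\ref{Ffaithful}, the hypothesis means that $Z(F)\cap G_\alpha\neq 1$, where $F:=F(G)$. I would reduce the bound on $|\Omega|$ to two separate bounds: one on the length of an $F$-orbit, and one on the number $m$ of $F$-orbits. Then $|\Omega| = m\cdot|\alpha^F|$.

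\emph{Orbit length.} Fix $1\neq z\in Z(F)\cap G_\alpha$. Since $F\le C_G(z)$, Lemma~\ref{normaliser}~(ii) gives $|F:F_\alpha|\le 4$, so $|\alpha^F|\le 4$. Also $|\alpha^F|\ge 2$: otherwise $F\le G_\alpha$, hence $F\le\bigcap_g G_\alpha^g=1$. That is impossible, since $G\neq 1$ is soluble or has a non-trivial Fitting subgroup anyway; more simply, $z\neq 1$ lies in $F$. Let $\Delta_1,\dots,\Delta_m$ be the $F$-orbits and $K_i$ the kernel of $F$ on $\Delta_i$. By Lemma~\ref{normalaction} all the permutation groups $(F,\Delta_i)$ are isomorphic, and the $K_i$ are permuted transitively by $G$-conjugation. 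Each $K_i$ is a non-trivial normal subgroup of the nilpotent group $F$, so $B_i:=K_i\cap Z(F)\neq 1$. Moreover $|Z(F):B_i|\le |\Delta_i|\le 4$, because $Z(F)/B_i$ acts faithfully on $\Delta_i$ and is abelian, hence semiregular on $\Delta_i$.

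\emph{Number of orbits.} Every non-trivial element of $B_i$ fixes $\Delta_i$ pointwise. By fixity $4$, a non-trivial element lies in at most $4/|\alpha^F|$ of the $B_i$. Hence any $r:=\lfloor 4/|\alpha^F|\rfloor+1$ of the $B_i$ corresponding to distinct orbits have trivial intersection. Concretely, $r=2$ when $|\alpha^F|\in\{3,4\}$ and $r=3$ when $|\alpha^F|=2$. Since $Z(F)$ embeds into $\prod_{j\le r} Z(F)/B_{i_j}$, this gives $|Z(F)|\le 4^r$. It also shows that $Z(F)$ is a small abelian $\{2,3\}$-group: the index is at most $4$, so only primes $2,3$ occur.

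Next I would count how many orbits can share a given subgroup. If $B_i=B_j$ for distinct orbits, then that subgroup fixes $|\Delta_i|+|\Delta_j|$ points. So at most $\lfloor 4/|\alpha^F|\rfloor$ orbits carry the same $B_i$. It then remains to bound the number of subgroups $B$ of index at most $4$ (more precisely, of index $|\alpha^F|$ or dividing it) in an abelian group of order at most $4^r$ that are non-trivial and have trivial $r$-wise intersections. This is a finite check, essentially spreads in $E_{16}$, subgroups of $E_9$, $C_4\times C_4$ and so on. For example, when $|\alpha^F|=4$ one finds at most $5$ such pairwise trivially intersecting subgroups of order $4$ in a group of order $16$. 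That gives $|\Omega|\le 20$ there, and the analogous counts for $|\alpha^F|=3$ and $|\alpha^F|=2$ should give at most $28$, with the extremal case coming from $|\alpha^F|=4$, $m=7$ via $E_8$-type configurations as in Row 28 of Table~\ref{TableNotFaithful}.

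\emph{Main obstacle.} The main obstacle is this last counting step. The subgroups $B_i$ need not all have the same index, and the case $|\alpha^F|=2$ allows pairwise non-trivial intersections. In that case I would additionally use that $z$ fixes at most $4$ points, so it lies in at most two $K_i$. I would also use that the $B_i$ form a single $G$-orbit of subgroups of $Z(F)$. Together these force $m\le |Z(F)|$-many configurations, and I would go through the possible orders $2^a3^b$ of $Z(F)$ case by case to reach $m\cdot|\alpha^F|\le 28$.
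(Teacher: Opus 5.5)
Your set-up is sound. $|\alpha^F|\le 4$ holds because $F$ centralises $z$ and so permutes $\FO(z)$. The $B_i$ are non-trivial and $G$-conjugate, and $|Z(F):B_i|\le 4$. One justification is wrong, though: ``abelian and faithful, hence semi-regular'' is false in general. The correct reason is that $Z(F)$ is central in $F$ and $F$ is transitive on $\Delta_i$, so $Z(F)_\omega=B_i$ for every $\omega\in\Delta_i$.

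The genuine gap is that you never actually bound $m$. You defer it to a ``finite check'' over subgroup configurations that you do not carry out. The one sample you give is wrong: for $|\alpha^F|=4$ you overlook $|Z(F)|=8$ with $|B_i|=2$. There the seven subgroups of order $2$ in $E_8$ give $m=7$ and $|\Omega|=28$, as in Row 28 of Table~\ref{TableNotFaithful}. This contradicts your claimed bound of $20$ and your own later remark that this is the extremal case. The case $|\alpha^F|=2$, which you correctly flag as awkward, is left open. As written, the proposal does not prove the lemma.

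The missing idea is a double count, and you already have all its ingredients. Take $z\in Z(F)^\#$. Because $z$ is central in $F$, $z$ fixes a point of $\Delta_i$ if and only if $z\in B_i$, if and only if $z$ fixes $\Delta_i$ pointwise. So $z$ lies in exactly $|\FO(z)|/|\alpha^F|\le 4/|\alpha^F|$ of the $B_i$. Sum over $z$, and let $|B|$ be the common order of the $B_i$, so $|B|\ge 2$ and $|Z(F):B|\le 4$. This gives
\[
m\,(|B|-1)\le \tfrac{4}{|\alpha^F|}\,(|Z(F)|-1),
\quad\text{hence}\quad
|\Omega|=m\,|\alpha^F|\le 4\cdot\frac{|Z(F)|-1}{|B|-1}\le 4\,\bigl(2\,|Z(F):B|-1\bigr)\le 28 .
\]
This handles every case at once, with no $r$-wise intersection arguments and no subgroup enumeration.

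It is exactly the paper's proof, except that the paper skips $F$-orbits altogether. It takes $Z:=Z(F(G))$ and notes $Z_\alpha\neq 1$ by Lemma~\ref{Ffaithful}. It gets $|\alpha^Z|\le 4$ because the abelian group $Z$ has $Z_\alpha$ fixing $\alpha^Z$ pointwise. It then counts pairs $(\omega,z)\in\Omega\times Z^\#$ with $\omega^z=\omega$.
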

\begin{proof}
    Assume for a contradiction that $|\Omega| > 28$ and consider the non-trivial abelian subgroup $Z := Z(F(G))$. By Lemma \ref{Ffaithful}, $Z_\alpha \neq 1$, and Lemma~\ref{normalaction} gives for all $\omega \in \Omega$ that $(Z, \alpha^Z)$ and $(Z, \omega^Z)$ are isomorphic as permutation groups. In particular $|Z_\omega| = |Z_\alpha|$ and $|\omega^Z| = |\alpha^Z|$. We also recall that $Z$ is abelian, hence $Z_\alpha$ fixes $\alpha^Z$ element-wise. Then  Hypothesis \ref{hyp} implies that
     $|\alpha^Z| \leq 4$. We count the set 
    \[
        \Gamma := \{ (\omega,z) \in \Omega \times Z^\# \mid \omega^z = \omega \} = \dot\bigcup_{z \in Z^\#} \FO(z) \times \{z\} = \dot\bigcup_{\omega \in \Omega} \{ \omega \} \times Z_\omega^\#
    \]
    in two different ways. Since each $z \in Z^\#$ satisfies $|\FO(z)| \leq 4$ by hypothesis, we obtain that
    \[
        4 \cdot |Z^\#| \geq \sum_{z \in Z^\#} |\FO(z)| = |\Gamma| = \sum_{\omega \in \Omega} |Z_\omega^\#| = |\Omega| \cdot |Z_\alpha^\#| > 28 \cdot |Z_\alpha^\#|. 
    \]
    Rearranging gives the contradiction
    \begin{align*}
        7 &< \frac{|Z^\#|}{|Z_\alpha^\#|}
		= \frac{|Z| - |Z:Z_\alpha|}{|Z_\alpha| - 1} + \frac{|Z:Z_\alpha| - 1}{|Z_\alpha| - 1}
		= \frac{|Z:Z_\alpha| \cdot (|Z_\alpha| - 1)}{|Z_\alpha| - 1} + \frac{|Z:Z_\alpha| - 1}{|Z_\alpha| - 1} \\
		& \leq |Z:Z_\alpha| + (|Z:Z_\alpha| - 1)
		= 2 \cdot |\alpha^Z| - 1 \leq 7.
    \end{align*}
\end{proof}

In Examples \ref{ExRow8and20} and \ref{ExRow28}, we already saw that the bound $|\Omega| \leq 28$ in the previous lemma is sharp.

\bigskip
\begin{lemma}\label{FittingGAP}
	Suppose that Hypothesis \ref{hyp} holds, that $F(G)\cap G_{\alpha}\neq 1$ and that $F(G)$ does not act faithfully on $\alpha^{F(G)}$.
	Then $F(G)$ has one of the following isomorphism types:
    \begin{enumerate}
    \item
    Elementary abelian of order $4$, $8$, $9$ or $16$,
	
    \item $C_4\times C_4$, or 
    \item $(C_4 \times C_4):C_2$.
    \end{enumerate}
	Moreover, Table~\ref{TableNotFaithful} lists all groups $G$ that fulfil the hypothesis of this lemma, together with all possible point stabiliser structures in each case.
\end{lemma}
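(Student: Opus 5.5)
The proof will be a reduction to a finite computer search, using Lemma~\ref{ZenFG1} for the reduction. Since $F(G)$ does not act faithfully on $\alpha^{F(G)}$, Lemma~\ref{ZenFG1} gives $|\Omega| \leq 28$. So $G$ is, up to permutation isomorphism, a transitive permutation group of degree at most $28$. These groups are all available in the Transitive Groups Library~\cite{TransGrp}.

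The plan is to loop over all degrees $n \leq 28$ and over all \texttt{TransitiveGroup(n,i)}, and to apply the following filters in order. First, keep only groups with \texttt{HasFixity(4)(G)} true; note that this filter already checks that no five points have a non-trivial pointwise stabiliser. Second, compute $F := $ \texttt{FittingSubgroup(G)} and a point stabiliser $G_\alpha$, and keep only groups with $F \cap G_\alpha \neq 1$. Third, keep only groups in which $F$ does not act faithfully on its orbit $\alpha^F$. By Lemma~\ref{Ffaithful}, this is the same as checking $Z(F) \cap G_\alpha \neq 1$, which is easy to test. For each surviving group we record \texttt{IdGroup} of $G$, the structure of $F$, and the structure of $G_\alpha$. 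A single $G$ can have several inequivalent actions on different $\Omega$, which is why the table lists several stabiliser types for some groups.

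To make the search feasible, I would prune using the structure theory before running it. Lemma~\ref{FfixedE1} gives $E(G)=1$, so $C_G(F) \leq F$ and $G$ is soluble-by-small. Also, $Z(F)_\alpha$ fixes $\alpha^{Z(F)}$ pointwise. By the counting argument in the proof of Lemma~\ref{ZenFG1}, this forces $|\alpha^{Z(F)}| \leq 4$ and $|\Omega|\cdot|Z(F)_\alpha^\#| \leq 4|Z(F)^\#|$, which strongly restricts $|Z(F)|$ relative to $|\Omega|$. In particular, we may restrict the search to soluble transitive groups. Degrees with huge numbers of transitive groups, above all $n=16$ and $n=24$, are then handled with cheap invariants first (solubility, existence of a non-trivial central element of $F$ fixing points), and only afterwards with the full fixity check.

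The main obstacle is computational: degrees $16$, $24$ and $27$ contain very many transitive groups, and computing conjugacy classes for all of them is costly. I would first discard the non-soluble groups and the groups whose Fitting subgroup meets the stabiliser trivially, and only then call \texttt{HasFixity}. Finally, the list of isomorphism types of $F(G)$ is read off from the output. These are $E_4$, $E_8$, $E_9$, $E_{16}$, $C_4\times C_4$ and $(C_4\times C_4):C_2$. Comparing the output with Table~\ref{TableNotFaithful} proves both assertions. The constructions in Examples~\ref{Ex1(b)}, \ref{ExRow8and20} and \ref{ExRow28} serve as independent checks on several rows.
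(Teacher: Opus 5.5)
Your strategy matches the paper's: Lemma~\ref{ZenFG1} gives $|\Omega|\le 28$, and you then search the Transitive Groups Library exhaustively. The two filters are fixity $4$ and $Z(F(G))\cap G_\alpha\neq 1$. By Lemma~\ref{Ffaithful} the second filter is equivalent to non-faithfulness, and it does not depend on the chosen point, because $Z(F(G))\trianglelefteq G$. The gap is your pruning step: nothing you cite implies that $G$ is soluble. Lemma~\ref{FfixedE1} gives only $E(G)=1$, hence $C_G(F(G))\le F(G)$, so $G/Z(F(G))$ embeds in $\aut(F(G))$. If $F(G)\cong E_8$ or $E_{16}$, that automorphism group is $\GL_3(2)$ or $\GL_4(2)$, which are not soluble. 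The counting argument behind Lemma~\ref{ZenFG1} only gives $|\alpha^{Z(F(G))}|\le 4$. Lemma~\ref{centrereduction_pre} would need a subgroup of $Z(G)$ with $|\Omega|\le 4|Z|$, and $Z(F(G))$ need not be central in $G$.

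In fact the statement you are proving contains non-soluble examples. Row~27 is $(E_4\times E_4):\SL_2(4)$ with $\SL_2(4)\cong\Alt_5$, acting on $20$ points. Row~28 is ${E_8}^{\cdot}\PSL_3(2)$ on $28$ points, which is exactly the group of Example~\ref{ExRow28} that you propose as a cross-check. A search restricted to soluble groups would omit both rows, so the completeness of Table~\ref{TableNotFaithful} would not be established. Strictly speaking, the list of possible types of $F(G)$ would not be established either, since non-soluble $G$ would never be examined. The fix is to drop the solubility filter and run over all transitive groups of degree $2$ to $28$, as the paper does with \texttt{AllTransitiveGroups}, \texttt{HasFixity(4)} and the test for a non-trivial point stabiliser in $Z(F(G))$. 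This is computationally unproblematic. Running the cheap $Z(F(G))$-test before the conjugacy-class computation, as you suggest, is harmless.
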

\begin{proof}
	By Lemma~\ref{ZenFG1}, $|\Omega| \leq 28$. By Lemma \ref{normalaction}, the action of $F(G)$ on $\alpha^{F(G)}$ is in one-to-one correspondence with the action of $F(G)$ on $\omega^{F(G)}$ for every $\omega\in \Omega$. Therefore, the calculation is independent of $\alpha$ and we can use the point~$1$ for our calculations without loss of generality. The following  \texttt{GAP} code uses the  \texttt{GAP} program at the end of Section 2 together with the
	Transitive Groups Library~\cite{TransGrp} and returns a list \texttt{Gs}. This list contains information about every group $G$ that acts transitively and with fixity $4$
	on a set of size at most~$28$ while satisfying $Z(F(G)) \cap G_\alpha \neq 1$. By Lemma \ref{Ffaithful}, the last condition is equivalent to $F(G)$ not acting faithfully on $\alpha^{F(G)}$. Thus \texttt{Gs} is the full list of examples.
	Afterwards, we map every group in \texttt{Gs} to a tuple of three IDs in the Small Groups Library~\cite{SmallGrp} and eliminate duplicates.
	The first entry represents the group itself, the second entry represents its Fitting subgroup, and the third entry represents a point stabiliser.
\begin{verbatim}
Gs := AllTransitiveGroups(
    NrMovedPoints, [2..28],
    HasFixity(4),  true,
    G -> IsTrivial(Stabiliser(Centre(FittingSubgroup(G)), 1)), false
);;
Set(List(Gs, G -> [ IdGroup(G),
                    IdGroup(FittingSubgroup(G)),
                    IdGroup(Stabiliser(G,1)) ] ));
\end{verbatim}
	Since the IDs are unique up to isomorphism, they can be used as a basis
	to determine the isomorphism types of the groups they represent.
	For Table~\ref{TableNotFaithful} the results %
	are sorted in such a way that isomorphic groups just appear once,
	but with all their
	actions that fulfil the hypothesis of this lemma.
	For example, the structure of the Fitting subgroup with ID $[32,34]$ in
	the Small Groups Library~\cite{SmallGrp} is
	denoted by $(C_4\times C_4):C_2$.
\end{proof}

This finishes off our analysis of the case where some non-trivial element of the Fitting subgroup fixes a point.

\subsection{$F(G)$ acting semi-regularly.}

This case is much easier to handle. 

\begin{lemma}\label{FGsemireg}
Suppose that Hypothesis \ref{hyp} holds and that $F(G)$ acts semi-regularly.
		Then every 
		$x \in G_\alpha^\#$ that fixes a point satisfies $|C_{F(G)}(x)|\leq 4$, and one of the following holds:
		\begin{enumerate}[\quad (1)] %
			\item For all $p\in \pi(G_{\alpha})$ the Sylow $p$-subgroups of $G_{\alpha}$ have $p$-rank $1$.
			\item $F(G)=O_2(G) \times O_3(G)$.
	\end{enumerate}
	\end{lemma}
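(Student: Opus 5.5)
We first establish the bound $|C_{F(G)}(x)|\le 4$ for every $x\in G_\alpha^\#$. Set $F:=F(G)$. Since $x$ fixes $\alpha$, the group $C_F(x)$ permutes $\FO(x)$, because for $c\in C_F(x)$ and $\omega\in\FO(x)$ we have $(\omega^c)^x=\omega^{xc}=\omega^c$. Because $F$ acts semi-regularly, $C_F(x)$ acts semi-regularly on $\FO(x)$. Hence $|C_F(x)|$ divides $|\FO(x)|\le 4$, which gives $|C_F(x)|\le 4$. Alternatively, this follows from Lemma~\ref{normaliser}~(ii), since $C_F(x)\cap G_\alpha=1$.

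For the dichotomy, we assume that (2) fails and aim to prove (1). Write $F=\prod_p O_p(G)$. Since (2) fails, there is a prime $r\ge 5$ with $R:=O_r(G)\neq 1$. We then fix $p\in\pi(G_\alpha)$ and let $P\in\Syl_p(G_\alpha)$, and we must show that $P$ has $p$-rank~$1$. Suppose instead that $P$ contains $A\cong E_{p^2}$. Then $A$ acts coprimely on the $r$-group $R$ when $p\neq r$, and the case $p=r$ is impossible: in that case $A\cap R=1$ by semi-regularity, but $A$ normalises $R$ and $AR$ is an $r$-group with $A\le G_\alpha$. Since $C_R(a)$ is an $r$-group of order at most $4$ with $r\ge 5$, we get $C_R(a)=1$ for all $a\in A^\#$, and so $r$-groups play no role in the following sense.

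The key step is the coprime action generation $R=\langle C_R(a)\mid a\in A^\#\rangle$ for a noncyclic abelian group $A$ acting coprimely (Kurzweil--Stellmacher 8.3.4). This yields $R=1$, a contradiction. Hence whenever $F$ has a Sylow subgroup for a prime $\ge 5$, every elementary abelian subgroup of $G_\alpha$ is cyclic, and so every Sylow subgroup of $G_\alpha$ has $p$-rank~$1$.

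A small wrinkle is the case $p=r$, which we handle as follows. If $A\le G_\alpha$ is an $r$-group normalising $R$, then $RA$ is an $r$-group. The element $a\in A^\#$ then centralises a nontrivial element of $Z(RA)\cap R$, since $R\trianglelefteq RA$ and so $Z(RA)\cap R\neq 1$. This contradicts $C_R(a)=1$, so $O_r(G)=1$ in that case. Consequently the only step that needs care is ensuring that the fixity bound applies to every nontrivial $a\in A$. This holds because $a\in G_\alpha$ fixes $\alpha$, so the first paragraph applies directly. The main obstacle is therefore just invoking the coprime generation lemma, with no further computation required.
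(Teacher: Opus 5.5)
Your proof is correct and follows essentially the same route as the paper: the bound $|C_{F(G)}(x)|\le 4$ comes from Lemma~\ref{normaliser}~(ii), and your direct semi-regularity count is an equivalent rephrasing of it. The rest is coprime generation $O_r(G)=\langle C_{O_r(G)}(a)\mid a\in A^\#\rangle$ for an $E_{p^2}$-subgroup $A\le G_\alpha$ and a prime $r\ge 5$ in $\pi(F(G))$, with the only difference in the case $p=r$: the paper uses Lemma~\ref{normaliser}~(v) to place a Sylow $r$-subgroup of $G$, and hence $O_r(G)$, inside $G_\alpha$, whereas you observe that $a$ centralises the nontrivial group $Z(O_r(G)A)\cap O_r(G)$, and both arguments work.
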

	
	\begin{proof}
	Let $x\in G_\alpha^\#$. Since $F(G) \cap G_{\alpha}=1$, we can use Lemma~\ref{normaliser}~(ii) to see that $|C_{F(G)}(x)|=|C_{F(G)}(x):C_{F(G) \cap G_{\alpha}}(x)|\leq 4$.
	
	Suppose that (1) does not hold. Then there exists $p\in \pi(G_{\alpha})$ such that $G_{\alpha}$ contains an
	elementary abelian subgroup $X$ of order $p^2$. Assume that there exists a prime $r\in \pi(F(G))$ such that
	$r\geq 5$. If $r=p$, then Lemma~\ref{normaliser}~(v) yields that $G_{\alpha}$ contains a Sylow $r$-subgroup of $G$, and
	then it follows that $1 \neq O_r(G)\le G_\alpha$, contradicting the transitive and faithful action of $G$.
	Therefore, $X$ acts coprimely on $O_r(G)$ and it follows that $O_r(G)=\langle C_{O_r(G)}(x)
	\mid x\in  X\setminus \{1\}\rangle$. Since for all non-trivial $x\in  X$, $ 4\geq |C_{F(G)}(x)|\geq
	|C_{O_r(G)}(x)|$ and $r\geq 5$, we deduce that $C_{O_r(G)}(x)=1$. Thus, $O_r(G)=1$, contrary to the
	fact that $r\in \pi(F(G))$.
\end{proof}

\section{The structure and action of the components}

We begin with some general results, and then we look at a number of cases in separate sub-sections, depending on how exactly the components of $G$ act on $\Omega$. 

\subsection{Quasi-simple groups acting with low fixity}

\bigskip
\begin{lemma}\label{quasisimpleFix23}
	Let $E$ be a quasi-simple group that acts transitively and with fixity $2$ or $3$ on a set $\Omega$ of size at least $4$. Then $E$ is simple.
\end{lemma}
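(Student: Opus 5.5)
We argue by contradiction. Suppose $E$ is quasi-simple but not simple, so that $Z := Z(E) \neq 1$. Lemma~\ref{normaliser}~(iv) and (vi) tell us two things: $|Z|$ divides the fixity $k \in \{2,3\}$, and $Z$ acts semi-regularly on $\Omega$. Since $|Z| > 1$ divides the prime $k$, we get $|Z| = k$. The action of $E$ need not be faithful a priori, so we first pass to $E/K$, where $K$ is the kernel. $K$ is a proper normal subgroup (the action is non-trivial because $|\Omega|\ge 4$ and the action is transitive), hence $K \le Z(E)$. By semi-regularity of $Z$ we get $K \cap Z = 1$, so $K = 1$. Thus we may assume the action is faithful, transitive, with fixity $k$, and $|Z|=k$.

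The plan is then to apply Lemma~\ref{centrereduction} with $Z = Z(E)$. Its hypothesis holds because $E$ is not soluble. We conclude that $\bar E := E/Z$, which is simple, acts faithfully, transitively and non-regularly on the set $\bar\Omega$ of $Z$-orbits, with fixity at most $k$. Moreover, the preimage of $\bar E_{\bar\omega}$ is $Z \times E_\omega$. Now take $1 \neq x \in E_\omega$ and count fixed points. Each $Z$-orbit $\bar\delta$ fixed by $\bar x$ contains a point fixed by $x$: indeed $E_\delta$ is the kernel of $Z\times E_\delta$ on $\bar\delta$, and $x$ lies in the point stabiliser of some point of $\bar\delta$ only if $x\in E_{\delta'}$ for some $\delta'\in\bar\delta$. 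Since $Z$ commutes with $x$ and permutes its fixed points semi-regularly, all $k$ points of $\bar\delta$ are then fixed by $x$. So $|\FO(x)| = k \cdot |\fix_{\bar\Omega}(\bar x)|$ for those $\bar\delta$ on which $x$ acts trivially. The careful statement is that Lemma~\ref{centrereduction}~(iii) controls exactly this: if $\bar H=\langle \bar x\rangle$ fixes more than $k/|Z| = 1$ point of $\bar\Omega$, then $|\bar H|$ divides $k$.

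The key step is to show that $\bar E$ acts on $\bar\Omega$ with fixity $1$, which contradicts Remark~\ref{CompInd}~(b). By non-regularity, some $\bar x\ne 1$ fixes a point. Suppose some $\bar x$ fixes at least $2$ points of $\bar \Omega$. Choose its preimage $x\in E_\omega$ as in Lemma~\ref{centrereduction}~(i), which has the same order. By (iii), $o(x)$ divides $k$, so $x$ is an element of order $k$, and it fixes points in at least two $Z$-orbits. I would then show that on each fixed $Z$-orbit, $x$ either fixes all $k$ points or acts as a generator of $Z$ would. The plan is to use the argument from the proof of Lemma~\ref{centrereduction}~(iii): the kernels $K_i$ of $Z\langle x\rangle$ on the fixed orbits have index $|Z|=k$ in a group of order $k^2$, and any two of them intersect trivially (a nontrivial element there would fix $2k>k$ points). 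So they are distinct subgroups of order $k$ of $Z\times\langle x\rangle\cong C_k\times C_k$, none equal to $Z$ (which is semi-regular). In particular, some element $y\in (Z\langle x\rangle)\setminus Z$ lies in a kernel $K_i$. Since $y\equiv x^j z$ with $j\neq 0$, its image in $\bar E$ is a nontrivial power of $\bar x$.

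The main obstacle is turning this into a contradiction for all such $x$. I would argue with the transfer: an element $z\in Z$ is written as a product of such $y$-type elements and their conjugates. Alternatively, it is more cleanly handled by Lemma~\ref{numberfpt}. The number of fixed points of $x$ on $\Omega=E/E_\omega$ equals $|x^E\cap E_\omega|\,|C_E(x)|/|E_\omega|$, and the corresponding count for $\bar x$ on $\bar E/\bar E_{\bar\omega}$ involves $|C_{\bar E}(\bar x)|$, which can be as large as $k|C_E(x)|/|Z|\cdot$(correction when $x$ is conjugate to $xz$). Comparing the two counts with $|\FO(x)|\le k$ should force $\bar x$ to have at most one fixed point, unless $x^E$ meets $xZ\setminus\{x\}$. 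In that exceptional case I would use that $E_\omega\cap Z=1$ and the strong-closure style argument of Lemma~\ref{centreextension} to obtain again $|\fix_{\bar\Omega}(\bar x)|\le 1$. Hence $\bar E$ is a simple group acting transitively, faithfully and with fixity $1$, that is, it is a Frobenius group, which is impossible. So $Z=1$ and $E$ is simple.
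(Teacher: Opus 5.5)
\textbf{There is a genuine gap at your key step.} Nothing in your sketch shows that $\bar E$ acts with fixity $1$, and the counting you propose cannot show it.

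Carry out your comparison via Lemma~\ref{numberfpt}, using $|\FO(x)|=k=|Z|$ for $1\neq x\in E_\omega$ and $|C_{\bar E}(\bar x)|=|C_E(x)|\cdot|x^E\cap xZ|/|Z|$. You get exactly
\[
|\fix_{\bar\Omega}(\bar x)| \;=\; \frac{|E_\omega\cap x^E Z|}{|E_\omega\cap x^E|}\cdot |x^E\cap xZ| .
\]
\emph{Exceptional case.} Suppose $x$ is $E$-conjugate to $xz$ for some $1\neq z\in Z$. The set of such $z$ is a subgroup of $Z$, hence all of $Z$ since $|Z|=k$ is prime. So $x^EZ=x^E$, and $\bar x$ has exactly $k\geq 2$ fixed points, not at most one. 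Lemma~\ref{centreextension} cannot rescue this: its strong closure hypothesis fails precisely here, because $x^g=xz\in ZE_\omega\setminus E_\omega$.

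\emph{Other case.} Here the quotient exceeds $1$ as soon as $E_\omega$ meets some class $(xz)^E$ with $z\neq 1$. Your counting does not exclude this, and the transfer remark is not an argument.

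Two smaller points. Your claim that every $Z$-orbit fixed by $\bar x$ contains a fixed point of $x$ is false, since $x$ acts on such an orbit as some element of $Z$. Also, your faithfulness step uses semi-regularity of $Z$, which presupposes faithfulness. Faithfulness is in fact immediate, because a kernel element would fix all $|\Omega|\geq 4>k$ points.

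\textbf{This is not a matter of detail.} Take $D_{16}$ acting on the eight cosets of a non-central subgroup of order $2$. The action is faithful, transitive and of fixity $2$. The centre $Z(D_{16})$ has order $2=k$ and is semi-regular, and $|\Omega|=8>k|Z|$. Yet $D_{16}/Z\cong D_8$ acts on the four $Z$-orbits with fixity $2$, because a reflection $s$ is conjugate to $sz$. Every ingredient of your counting applies to this group. So quasi-simplicity of $E$ must be used much more substantially than for non-solubility and for excluding a simple Frobenius group at the end.

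\textbf{How the paper supplies this input.} It comes from the classification of low-fixity actions of simple groups:
\begin{itemize}
\item For $k=2$ the paper quotes Theorem~5.1 of \cite{MW}.
\item For $k=3$ it uses Lemma~\ref{centrereduction}\,(iii) to see that $\bar E$ has fixity $2$ or $3$, with $2$- resp.\ $3$-point stabilisers cyclic of order $3$. It then runs through Table~\ref{TableAllFix}, together with Lemma~2.15 of \cite{MW}.
\item This reduces to $\Alt_5$ and $\PSL_2(7)$, which have no triple covers, and to $\Alt_6$ and $\Alt_7$. In the latter two, a divisibility count on $\bar\Omega\setminus\{\bar\alpha\}$ produces an involution in $\bar E_{\bar\alpha}$ with at least two fixed points, contradicting Lemma~\ref{centrereduction}\,(iii).
\end{itemize}
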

\begin{proof}
	
	If $E$ acts with fixity $2$ on $\Omega$, then, by Theorem~5.1 in~\cite{MW}, the quasi-simple group~$E$ is in fact simple.
	Therefore, we suppose from now on that $E$ acts with fixity $3$ on~$\Omega$.

	Assume for a contradiction that $E$ is not simple. Then $Z := Z(E) \neq 1$ and by Lemma~\ref{normaliser}~(iv), the number $|Z|$ divides $3$ and therefore equals $3$. Let $\bar{E}:=E/Z$ and let $\bar{\Omega}$ denote the set of $Z$-orbits. Lemma \ref{centrereduction} states that $\bar{E}$ acts transitively and faithfully with fixity $1$, $2$ or $3$ on $\bar{E}$, and we note that fixity $1$ is impossible by Remark \ref{CompInd}~(b). Let $\alpha \in \Omega$ and $\bar{\alpha} := \alpha^Z \in \bar{\Omega}$. 

	Assume for a contradiction that $\bar{E}$ acts with fixity $2$ on $\bar{\Omega}$. By Lemma \ref{centrereduction}, the two-point stabilisers in $\bar{E}$ are cyclic of order $3$. If $|\bar{E}_{\bar{\alpha}}| \neq 3$, then Lemma 2.15 in \cite{MW} implies that $\bar{E}_{\bar{\alpha}}$ is a Frobenius group with complements of order $3$. Going through the list of fixity $2$ actions in Table \ref{TableAllFix} shows that either $\bar{E} \cong \Alt_5$ with $\bar{E}_{\bar{\alpha}}$ isomorphic to $C_3$ or $\Alt_4$, or $\bar{E} \cong \PSL_2(7)$ with $\bar{E}_{\bar{\alpha}}$ isomorphic to $C_3$ or $C_7 : C_3$. But $\Alt_5$ and $\PSL_2(7)$ do not possess triple covers. Hence $\bar{E}$ cannot act with fixity $2$ on $\bar{\Omega}$.

	We deduce that $\bar{E}$ acts with fixity $3$ on $\bar{\Omega}$. Then Lemma \ref{centrereduction} implies that the three-point stabilisers are cyclic of order $3$ and that every non-trivial element in $\bar{E}_{\bar \alpha}$ has either just one or precisely three fixed points on $\bar{\Omega}$. Since there exists some element of order $3$ with exactly three fixed points, it follows that $|\bar{\Omega}|$ must be divisible by $3$, i.e. $\bar{E}_{\bar{\alpha}}$ does not contain a Sylow $3$-subgroup of $\bar{E}$. With Table \ref{TableAllFix}, it follows that either $\bar{E} \cong \Alt_6$ with $\bar{E}_{\bar{\alpha}}$ isomorphic to $\Sym_4$ or $\Alt_5$ and $|\bar{\Omega}| = 15$ or $6$, or $\bar{E} \cong \Alt_7$ with $\bar{E}_{\bar{\alpha}} \cong \PSL_2(7)$ and $|\Omega| = 15$. In all cases, $|\bar{\Omega} \setminus \{\bar{\alpha}\}| = |\bar{\Omega}| - 1$ is not divisible by $4$, but $|\bar{E}_{\bar{\alpha}}|$ is divisible by $4$. Hence there exists an involution in $\bar{E}_{\bar{\alpha}}$ with at least two fixed points, in contradiction to Lemma \ref{centrereduction}. Thus $\bar{E}$ cannot act with fixity $3$ on $\bar{\Omega}$, which gives a contradiction, and hence $E$ must be simple.

\end{proof}

For fixity $4$, the situation is different and allows for more variety, as the next few results show.

\begin{lemma}\label{quasisimple}
	Let $E$ be a quasi-simple group that acts transitively and with fixity $4$ on a set $\Omega$ of size at least $5$. Then $E$ is simple or isomorphic to $2^{\cdot}\Sz(8)$, $2^{\cdot}\PSL_3(4)$ or $SL_2(q)$ for some odd prime power $q$. Moreover, if $Z(E) \neq 1$, then $E/Z(E)$ acts with fixity $2$ on the set of $Z(E)$-orbits, and the point stabilisers have odd order.
\end{lemma}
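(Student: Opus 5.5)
If $E$ is simple, there is nothing to prove. So suppose $Z := Z(E) \neq 1$. By Lemma~\ref{normaliser}~(iv) and~(vi), $|Z|$ divides $4$ and $Z$ acts semi-regularly on $\Omega$. Let $\bar E := E/Z$ and let $\bar\Omega$ be the set of $Z$-orbits. Since $E$ is not soluble, Lemma~\ref{centrereduction} applies. It shows that $\bar E$ acts transitively, faithfully and non-regularly with fixity at most $4$ on $\bar\Omega$, and that $|\bar E_{\bar\alpha}| = |E_\alpha|$. Fixity $1$ is excluded by Remark~\ref{CompInd}~(b), because $\bar E$ is simple.

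The key step is to bound point stabiliser elements via part~(iii) of Lemma~\ref{centrereduction}. Take any $H \le E_\alpha$ with $|\fix_{\bar\Omega}(\bar H)| > 4/|Z|$. Then $|\bar H|$ divides $4$, and from the proof of that lemma $H$ is a $2$-group.

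First I treat $|Z| = 4$. Then every non-trivial $\bar x \in \bar E_{\bar\alpha}$ with at least two fixed points has order dividing $4$. Moreover, if $|\bar H| \ge 2$ and $\bar H$ fixes two points, then $K_1 \cap K_2 \neq 1$, which is a contradiction. So $\bar E$ acts with fixity $1$, which is impossible. Hence $|Z| = 2$. The Schur multiplier must then have even order. Since $E$ is quasi-simple with centre of order $2$, we also have $E \cong 2\cdot\bar E$.

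Next I show that $\bar E$ acts with fixity exactly $2$. Suppose instead that $\bar E$ has fixity $3$ or $4$. Take $\bar x$ realising the fixity. By~(iii), $|\bar x|$ divides $4$, and the subgroup fixing three points has order at most $4$ and is a $2$-group. For fixity $3$, I argue as in Lemma~\ref{quasisimpleFix23}. Going through Table~\ref{TableAllFix} for the candidates with even multiplier, every fixity~$3$ action has either a stabiliser with odd-order elements fixing three points, or a parity obstruction. Either contradicts~(iii).

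For fixity $4$, I run a case check through the fixity~$4$ column of Table~\ref{TableAllFix}. In each case some element of odd prime order (for example, of order $3$, $5$, or of order $q\pm\sqrt{2q}+1$) attains four fixed points. This contradicts the fact that only $2$-elements can fix more than two $Z$-orbits. The remaining small cases ($\Alt_6$, $\PSL_2(7)$, $\PSL_2(8)$ with stabiliser $C_2$) I would check separately via the lifting condition $K_1\cap K_2\cap K_3 = 1$. This table case analysis is the main obstacle, and I would settle borderline cases with \texttt{GAP}.

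So $\bar E$ acts with fixity $2$, and $\bar E$ appears in the fixity~$2$ column of Table~\ref{TableAllFix} with even Schur multiplier. That column contains $\PSL_2(q)$, $\Sz(q)$ and $\PSL_3(4)$. The even-multiplier covers of order $2$ are $\SL_2(q)$ for odd $q$ (including $\SL_2(9) = 2\cdot\Alt_6$), $2\cdot\Sz(8)$, and $2\cdot\PSL_3(4)$; this yields the list in the statement.

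Finally, the stabilisers have odd order. Let $t \in E_\alpha$ be an involution. Its image $\bar t$ fixes $\bar\alpha$. The preimage of $\langle\bar t\rangle$ is $Z\times\langle t\rangle$, and this contains the unique involution $z$ of $Z$. But in $\SL_2(q)$, $2\cdot\Sz(8)$ and $2\cdot\PSL_3(4)$ I will check that every involution is central. Indeed, the Sylow $2$-subgroup of $\SL_2(q)$ is generalized quaternion, and for the other two covers I would verify this with the Atlas. Hence $t \in Z$, which contradicts the semi-regularity of $Z$. Therefore $E_\alpha$ has odd order.
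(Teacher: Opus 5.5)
Your skeleton (pass to $\bar E=E/Z$ via Lemma~\ref{centrereduction}, read off Table~\ref{TableAllFix}, identify the covers) matches the paper's, but your exclusion of fixity $3$ and $4$ for $\bar E$ has a genuine gap. You rely on three mechanisms: an odd-order element with three or four fixed points, a parity argument as in Lemma~\ref{quasisimpleFix23}, and the condition $K_1\cap K_2\cap K_3=1$. All three fail for several $\PSL_2(q)$ candidates where the fixity is attained only by a $2$-subgroup of order at most $4$, so Lemma~\ref{centrereduction}~(iii) is satisfied. For fixity $3$ these are $\Alt_5$ on the cosets of $E_4$, $\PSL_2(7)$ on $7$ points and $\PSL_2(11)$ on $11$ points. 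For fixity $4$ they are $\Alt_6$ with stabiliser $C_2$, $D_{10}$ or $E_9{:}C_2$, $\PSL_2(7)$ with stabiliser $C_2$ or $\Sym_3$, and $\PSL_2(17)$ with stabiliser $C_4$ or $E_{17}{:}C_4$. So your claim that an element of odd prime order always attains four fixed points is false, and your list of ``remaining small cases'' is incomplete. The parity argument of Lemma~\ref{quasisimpleFix23} works only because there $|Z|=3$ forbids involutions with two or more fixed points on $\bar\Omega$; with $|Z|=2$ they are allowed. For $\bar H=\langle \bar h\rangle$ of order $2$, the condition $K_1\cap K_2\cap K_3=1$ is satisfiable: $h$ can fix two $Z$-orbits pointwise and act as $z$ on the others, so $h$ and $hz$ each fix four points.

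The missing idea is the paper's. By Lemma~\ref{centrereduction}~(i) and (iii), the preimage of a three- or four-point stabiliser $\bar H\le\bar E_{\bar\alpha}$ is $Z\times H_\alpha$ with $H_\alpha\cong\bar H$ a non-trivial $2$-group. So $E$ contains a four-group, which is impossible in $\SL_2(q)$. You have this unique-involution argument in your last paragraph, but it must be used here to discard every $\PSL_2(q)$. After that, since $\bar E_{\bar\alpha}$ must have even order, only $\Alt_7$ and $M_{12}$ survive ($M_{11}$ and $\PSU_3(3)$ have no double cover; incidentally $\PSL_2(8)$ has trivial multiplier and needs no check). In both, an element of order $3$ has three or four fixed points.

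Two further steps are wrong. First, for $|Z|=4$ nothing forces $K_1\cap K_2\neq 1$. Here $|K_i|=|H|\le 4$ while $|ZH|=4|H|$, and the proof of Lemma~\ref{centrereduction}~(iii) obtains $K_1\cap K_2\neq 1$ only when $|H|>k$. Indeed a $2$-element may fix one $Z$-orbit pointwise and act as a non-trivial element of $Z$ on another. Instead, note that for $|Z|=4$, Lemma~\ref{centrereduction}~(iii) forbids any non-trivial element of odd order from having two or more fixed points on $\bar\Omega$. But every action in Table~\ref{TableAllFix} of a group admitting a cover with centre of order $4$ ($\PSL_3(4)$, $\Sz(8)$, $\PSU_4(3)$, $\PSL_4(5)$, $M_{22}$) has such an element.

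Second, it is false that all involutions of $2{\cdot}\Sz(8)$ and $2{\cdot}\PSL_3(4)$ are central. A $2$-group with a unique involution is cyclic or generalised quaternion, so $\bar E$ would have cyclic or dihedral Sylow $2$-subgroups. However, $\Sz(8)$ and $\PSL_3(4)$ have $2$-rank $3$ and $4$. For these two covers, odd order must instead be read off the table ($\bar E_{\bar\alpha}\cong C_7$, resp.\ $C_5$). The Borel subgroup of $\Sz(8)$ is excluded because a complement to $Z$ in its preimage would split a Sylow $2$-subgroup of $E$ over $Z$, and hence $E$ itself by Gasch\"utz's theorem.
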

\begin{proof}
	
	Suppose that $E$ is not simple. Then $Z := Z(E)$ has order $2$ or $4$ according to Lemma \ref{normaliser} (iv). Let $\bar{E} := E / Z$ and let $\bar{\Omega}$ denote the set of $Z$-orbits. Furthermore, let $\alpha \in \Omega$ and $\bar{\alpha} := \alpha^{Z} \in \bar{\Omega}$. By Lemma \ref{centrereduction}, $\bar{E}$ acts transitively and faithfully with fixity $1$, $2$, $3$ or $4$ on $\bar{\Omega}$ and we note that fixity $1$ is impossible by Remark \ref{CompInd}~(b).

	Assume for a contradiction that $\bar{E}$ acts with fixity $3$ or $4$ on $\bar{\Omega}$.
	Let $H$ be the full preimage of a three point stabiliser or a four point stabiliser $\bar{H} \leq \bar{E}_{\bar{\alpha}}$, respectively. Lemma \ref{centrereduction} (i) and (iii)  yields that $\bar{H}$ is a $2$-group and that $H = Z \times H_\alpha$. In particular, $\bar{E}_{\bar{\alpha}}$ has even order.\\
Assume for a contradiction that $\bar{E}$ is isomorphic to $\PSL_2(q)$ for some prime power $q \geq 5$. Since $|Z| \in \{2,4\}$, this is only possible if $E \cong \SL_2(q)$, where $q$ is odd. Now $E$ contains a unique involution, in contradiction to the fact that $|\Omega_1(H)| = |\Omega_1(Z) \times \Omega_1(H_\alpha)| \geq 4$. Hence $\bar{E}$ is not isomorphic to $\PSL_2(q)$.\\
	We now go through the list in Table \ref{TableAllFix} and exclude all groups isomorphic to $\PSL_2(q)$, for some $q \geq 5$, and all cases where the point stabilisers have odd order. We also exclude $\M_{11}$ and $\PSU_3(3)$ because these groups do not possess double covers. This leaves us with two possibilities: $\bar{E} \cong \Alt_7$, where $\bar{E}_{\bar{\alpha}}$ is isomorphic to $\PSL_2(7)$ or to $\Alt_6$ and $\bar{E} \cong \M_{12}$, where $\bar{E}_{\bar{\alpha}} \cong \M_{11}$. In each case, $\bar{E}$ contains an element of order $3$ with three or four fixed points on $\bar{\Omega}$, which is impossible because $\bar H$ is a $2$-group. 

	We are left with the case where $\bar{E}$ acts with fixity $2$ on $\bar{\Omega}$. If $\bar{E} \cong \Sz(q)$ for some $q$, then $\bar{E} \cong \Sz(8)$, because the other groups do not possess double covers. According to Table \ref{TableAllFix}, $\bar{E}_{\bar{\alpha}}$ is cyclic of order $7$. If $|Z| = 4$, then we obtain a contradiction to Lemma \ref{centrereduction} (iii). Hence $|Z| = 2$ and $E \cong 2.\Sz(8)$. Similarly, if $\bar{E} \cong \PSL_3(4)$, then $\bar{E}_{\bar{\alpha}}$ is cyclic of order $5$ and Lemma \ref{centrereduction} gives that $E = 2.\PSL_3(4)$. According to Table \ref{TableAllFix}, $\bar{E}$ is isomorphic to $\PSL_2(q)$ in every other case. Since $|Z| \in \{2,4\}$, this is only possible if $E \cong \SL_2(q)$. If the point stabilisers had even order, then the full preimage $Z \times E_\alpha$ of $\bar{E}_{\bar{\alpha}}$ would contain an elementary abelian subgroup of order $4$, which is impossible by the structure of $\SL_2(q)$. This concludes the proof.
\end{proof}

\bigskip
\begin{ex}\label{2.PSL_2.Sz}
	Let $E := \PSL_3(4)$ or $E := \Sz(8)$. Using Table \ref{TableAllFix}, we see that both groups admit a fixity $2$ action with cyclic point stabilisers of order $5$ or $7$, respectively. Let $G$ be a quasi-simple group with centre of order $2$ such that $\bar G := G/Z(G) \cong E$, and let $H \leq G$ be cyclic of order $5$ or $7$ such that $\bar{G}$ acts with fixity $2$ on $\bar{G}/\bar{H}$. The full pre-image of $\bar{H}$ is given by $H \times Z(G)$. Then, since $(|H|,|Z(G)|) = 1$, it follows that $H$ is strongly closed in $H \times Z(G)$ with respect to $G$. Now Lemma \ref{centreextension} gives that $G$ acts with fixity $4$ on the coset space $G/H$ via right multiplication.
\end{ex}

\bigskip
\begin{lemma}\label{SLquasisimple}
	Let $q \geq 5$ be an odd prime power and let $G=\SLi_2(q)$.
	Suppose that $G$ acts transitively and faithfully on a set $\Omega$.
	Then $G$ acts with fixity $4$ on $\Omega$ if and only if one of the following cases occurs:
	\begin{enumerate}[\quad (1)]
		\item $q=5$ and the point stabilisers are cyclic of order $3$ or $5$.
		\item $q \equiv 1 \mod 4$, $q\geq 9$, and the point stabilisers are cyclic of order $\frac{q+1}{2}$.
		\item $q \equiv -1 \mod 4$, $q\geq 7$, and the point stabilisers are cyclic of order $\frac{q-1}{2}$, or they are 
		a semi-direct product of an elementary abelian group of order $q$ with a cyclic group of order $\frac{q-1}{2}$.
	\end{enumerate}
\end{lemma}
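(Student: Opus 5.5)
The plan is to derive the forward direction from Lemma~\ref{quasisimple} and the fixity~$2$ list in Table~\ref{TableAllFix}, and to obtain the converse by lifting the corresponding fixity~$2$ actions with Lemma~\ref{centreextension}. Throughout, let $Z := Z(G)$, which has order $2$, and set $\bar G := G/Z \cong \PSL_2(q)$.

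\emph{Forward direction.} Suppose $G$ acts transitively, faithfully and with fixity $4$ on $\Omega$.
\begin{enumerate}
\item By Remark~\ref{CompInd}~(a), $G$ has no proper subgroup of index at most $4$. Hence $|\Omega| \geq 5$, so Lemma~\ref{quasisimple} applies.
\item Since $Z \neq 1$, Lemma~\ref{quasisimple} gives that $G_\alpha$ has odd order and that $\bar G$ acts with fixity $2$ on the set $\bar\Omega$ of $Z$-orbits.
\item By Lemma~\ref{centrereduction}~(i), the full preimage of $\bar G_{\bar\alpha}$ is $Z \times G_\alpha$, so $\bar G_{\bar\alpha} \cong G_\alpha$. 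Thus $\bar G_{\bar\alpha}$ is an odd-order point stabiliser of a fixity $2$ action of $\PSL_2(q)$.
\item Read off the odd-order entries in the fixity $2$ column of Table~\ref{TableAllFix}. The generic stabilisers are $C_{\frac{q-1}{2}}$, $C_{\frac{q+1}{2}}$ and $E_q:C_{\frac{q-1}{2}}$.
\begin{itemize}
\item $\frac{q+1}{2}$ is odd exactly when $q \equiv 1 \mod 4$.
\item $\frac{q-1}{2}$ is odd exactly when $q \equiv -1 \mod 4$; this covers both $C_{\frac{q-1}{2}}$ and $E_q:C_{\frac{q-1}{2}}$.
\end{itemize}
\item Check the small rows of the table separately.
\begin{itemize}
\item $q=5$, $\PSL_2(5) \cong \Alt_5$: the odd-order entries are $C_3$ and $C_5$, which is Case (1).
\item $q=7$: the odd-order entries are $C_3$ and $C_7:C_3$, matching Case (3).
\item $q=9$: the only odd-order entry is $C_5$, matching Case (2).
\item $q=11$: the odd-order entries are $C_5$ and $C_{11}:C_5$.
\item $q=13$: the only odd-order entry is $C_7$.
\end{itemize}
In each case these match the generic formulas. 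This yields exactly the list in the statement.
\end{enumerate}

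\emph{Converse.} Let $\bar H \le \bar G$ be one of the listed odd-order subgroups, so that $\bar G$ acts with fixity $2$ on $\bar G/\bar H$ by Table~\ref{TableAllFix}.
\begin{enumerate}
\item The preimage of $\bar H$ in $G$ is an extension of $Z \cong C_2$ by a group of odd order. By Schur--Zassenhaus it equals $Z \times H$, where $H$ is its unique Hall $2'$-subgroup. So $H \cong \bar H$ and $H \cap Z = 1$.
\item $H$ is strongly closed in $ZH$ with respect to $G$: any conjugate $H^g$ has odd order, so $H^g \cap ZH$ consists of elements of odd order in $ZH$, and these all lie in $H$. This is the same argument as in Example~\ref{2.PSL_2.Sz}.
\item Lemma~\ref{centreextension} now gives that $G$ acts with fixity $2\cdot 2 = 4$ on $G/H$.
\item The action is faithful, because the normal subgroups of $\SL_2(q)$ are only $1$, $Z$ and $G$, and $H$ contains neither $Z$ nor $G$.
\end{enumerate}
Every transitive faithful action with such point stabilisers is equivalent to this coset action, which proves the ``if'' direction.

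The step I expect to need the most care is the case-by-case reconciliation of the small values $q \le 13$ in Table~\ref{TableAllFix} with the uniform statement. In particular, for $q=5$ both $C_3$ and $C_5$ survive even though $\frac{q-1}{2}=2$ is even. I also need to confirm that the fixity of $\bar G$ on $\bar\Omega$ is exactly $2$, not $1$; this holds because fixity $1$ is excluded by Remark~\ref{CompInd}~(b).
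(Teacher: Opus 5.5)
Your proposal is correct and follows the paper's proof essentially step for step. The forward direction combines Lemma~\ref{quasisimple}, Lemma~\ref{centrereduction}~(i) and the odd-order fixity~$2$ entries of Table~\ref{TableAllFix}, and the converse uses that the odd-order subgroup $H = O(ZH)$ is strongly closed in $ZH$ together with Lemma~\ref{centreextension}; your row-by-row check of small $q$ and your explicit faithfulness argument only spell out details that the paper leaves implicit.
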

\begin{proof}
	Let $Z := Z(G)$, $\bar{G} := \PSL_2(q) = G / Z$ and $\bar{\Omega} := \{ \omega^{Z} \mid \omega \in \Omega \}$. Furthermore, let $\alpha \in \Omega$ and $\bar{\alpha} := \alpha^{Z}$.
	Suppose that $G$ acts with fixity $4$ on $\Omega$. Since $G$ is quasi-simple, Lemma \ref{centrereduction} is applicable, and it gives that $\bar{G}_{\bar{\alpha}} \cong G_{\alpha}$. 
	Moreover, by Lemma \ref{quasisimple}, $G_\alpha$ has odd order and we can go through the different cases described in Table \ref{TableAllFix}: In each case $G_\alpha$ is isomorphic to one of the groups listed in the statement.
    
	Conversely, if $G_\alpha$ is one of the groups described in the statement, then $G_\alpha \cap Z = 1$ and $G_\alpha = O^2(Z G_\alpha) = O(Z G_\alpha)$ is strongly closed in $Z G_\alpha$. Then Table \ref{TableAllFix} tells us that $\bar{G}$ acts with fixity $2$ on $\bar{G}/\bar{G_\alpha}$ by right multiplication. Finally, Lemma \ref{centreextension} implies that $G$ acts with fixity $|Z| \cdot 2 = 4$ on $\Omega$.
\end{proof}

Strictly speaking, our previous arguments use the fact that all examples from Table \ref{TableAllFix} actually occur with the given fixity. This can be checked by looking at the individual groups and their subgroup structure.

\bigskip\noindent
Lemmas \ref{SLquasisimple} and \ref{quasisimple}, together with Example~\ref{2.PSL_2.Sz}, give a full classification of all non-simple, quasi-simple groups that allow for an action with fixity $4$. Together with Table \ref{TableAllFix} this gives us a full classification of all quasi-simple groups that act transitively and with fixity $4$.

\subsection{Semi-regular components and the structure of $E(G)$}

In the previous two sections we have seen that it is often useful to argue with Lemma~\ref{normaliser}~(i) or (ii), applied to the normaliser or the centraliser of a
subgroup of a point stabiliser.
Thus, it is unsurprising that the size of a centraliser will play a key role in the analysis of the structure of possible components. This is why we begin this subsection with a lemma that is interesting in its own right. The idea for the proof is due to Gernot Stroth who suggested to use a version of the Brauer-Fowler-Theorem together with a library of primitive groups.

\bigskip
\begin{lemma}\label{ceninvsimple}
	Let $E$ be a non-abelian simple group. Let $x$ be an  automorphism
	of~$E$ of order $2$. If $E$ is not isomorphic to $\Alt_5$, then  $|C_E(x)| > 4$.
\end{lemma}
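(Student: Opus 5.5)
We follow the hint given before the statement and argue with a Brauer--Fowler type bound together with a library of primitive groups. Suppose that $E$ is non-abelian simple, $E \not\cong \Alt_5$, and that $x \in \aut(E)$ has order $2$ with $|C_E(x)| \le 4$. We aim for a contradiction.

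\textbf{Step 1: $|E|$ is bounded.} Let $X := E\langle x\rangle \le \aut(E)$. If $x$ is inner, then $X = E$ and $x$ is an involution of $E$ with $|C_E(x)|\le 4$. Otherwise $X = E:\langle x\rangle$ and $x$ is an involution of $X$ with $|C_X(x)| = 2|C_E(x)| \le 8$. In both cases $X$ contains an involution whose centraliser has order at most $8$. The version of the Brauer--Fowler theorem we have in mind bounds the order of a group with trivial soluble radical by a function of $|C_X(x)|$. Concretely, the number of involutions in $x^X$ is $|X|/|C_X(x)|$, and a counting argument of Brauer--Fowler type gives $|X| \le f(8)$ for an explicit $f$. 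Sharper still, the same counting produces a proper subgroup of $E$ of small index, bounded in terms of $|C_X(x)|$.

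\textbf{Step 2: a small primitive action.} From the subgroup of bounded index we obtain a faithful primitive action of $X$, or of $E$, of bounded degree $n$. The classical form of this step is that a group with an involution whose centraliser has order $c$ contains a proper subgroup of index less than roughly $c^2$, or $c^4$ in the weaker version. We then consult the library of primitive groups in \texttt{GAP} for degrees up to that bound. For every almost simple primitive group in the list, we check all involutions of the group, and in the outer case all involutions of $E:\langle x\rangle$, and compute $|C_E(x)|$. The only case with $|C_E(x)|\le 4$ should be $E\cong\Alt_5$, with $x$ an involution (centraliser $E_4$), or $x$ a transposition in $\Sym_5$ (centraliser $\Sym_3$ of order $6$, which already fails). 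Note that $\PSL_2(7)$ has involution centralisers $D_8$, and outer involutions of $\PSL_2(q)$ centralise dihedral groups of order $q\pm1$. So small cases can also be checked by hand.

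\textbf{Alternative if the bound is too weak.} We can argue by structure instead. If $|C_E(x)|\le 4$, then $C_E(x)$ is a $2$-group. If $x$ is inner, a Sylow $2$-subgroup $S$ of $E$ containing $x$ satisfies $Z(S)\le C_E(x)$, and $C_S(x)$ has order at most $4$, which forces $|S|\le 8$ with $x$ central in $S$. Hence $|S| = 4$, since $S$ would otherwise be $D_8$ or $Q_8$ with centraliser of order $8$. So $S\cong E_4$, and by Gorenstein--Walter, $E\cong \PSL_2(q)$ with $q\equiv \pm 3 \bmod 8$. There the involution centraliser is dihedral of order $q\pm1$, and this is at most $4$ only for $q=5$, giving $E\cong\Alt_5$. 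If $x$ is outer, then by a coprime-action/Glauberman type argument $C_E(x)$ controls $E$ strongly. Tiny centralisers of outer involutions force small rank, and we check graph, field and diagonal automorphisms of Lie type groups and the $\Sym_n$-outer case directly (a transposition centralises $\Sym_{n-2}$, which is larger than $4$ for $n\ge 5$).

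\textbf{Main obstacle.} The hard part is the outer-automorphism case in Step 1/2: making the Brauer--Fowler bound explicit enough that the primitive-groups library covers the required degrees. We would first try the index bound $|E:H| < |C_X(x)|^2\cdot$ small, with $|C_X(x)|\le 8$, so that degrees well below $4096$ suffice.
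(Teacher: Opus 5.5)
Your main route is the paper's. The explicit Brauer--Fowler bound that the paper quotes from Suzuki gives $1\neq w\in X$ with $|X:C_X(w)|\le |C_X(x)|^2\le 64$. This yields a core-free maximal subgroup of index at most $64$ in $E$ or in $X$, and a search through the almost simple primitive groups of degree at most $64$, checking all involutory automorphisms of the socle, leaves only $\Alt_5$. So the obstacle you name is settled by that citation; the paper only adds a separate treatment of alternating groups and a low-index-centraliser filter to shorten the computation.

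Your fallback is not needed, and it would not work as written. The condition $|C_S(x)|\le 4$ does not force $|S|\le 8$: a non-central involution of a dihedral or semidihedral $2$-group has centraliser of order $4$ in $S$. The outer case is only a sketch, and an involution cannot act coprimely on a group of even order.
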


\begin{proof}
	Assume for a contradiction that $E$ is not isomorphic to $\Alt_5$ and that the order of $C_E(x)$ is at most $4$.
	Since $E$ is non-abelian simple, it is possible to identify $E$ with the subgroup $\Inn(E) \leq \aut(E)$ of inner automorphisms. In particular, we have $C_E(x) \cong C_{\Inn(E)}(x)$ and our assumption can be restated as $|C_{\Inn(E)}(x)| \leq 4$. Using this idea, the following \texttt{GAP} code tests whether or not the non-abelian simple group $E$ possesses an automorphism $x$ of order $2$ such that $|C_E(x)| \leq 4$. Note that it is sufficient to check one representative of every conjugacy class in $\aut(E)$.
\begin{verbatim}
HasSuitableAutomorphism := E -> not IsEmpty(Filtered(
    List(ConjugacyClasses(AutomorphismGroup(E)), Representative),
    x -> Order(x) = 2 and
         Order(Centraliser(InnerAutomorphismGroup(E), x)) <= 4
));;
\end{verbatim}
	 We start by showing that $E$ is not isomorphic to $\Alt_n$ for some $n \geq 6$. For $n \in \{6, 7\}$, we use \texttt{GAP} and see that \texttt{HasSuitableAutomorphism(AlternatingGroup(n))} evaluates to \texttt{false} for \texttt{n := 6} and \texttt{n := 7}. For $n \geq 8$, Theorem 5.2.1 in \cite{GLS3} states $\aut(E) \cong \Sym_n$ and we identify $E$ with $\Alt_n$ and $x$ with the corresponding element in $\Sym_n$. Since $x$ is an involution, it can be written as a product of disjoint transpositions, and we may assume without loss that $x = (12)(34)...(2\ell-1,2\ell) \in \Sym_n$ for some $\ell \in \N$. But then $\<(12),(34),(56),(78)\> \cap \Alt_n \leq C_E(x)$ is elementary abelian of order $8$, contrary to our assumption that $|C_E(x)| \leq 4$. Thus $E$ is not isomorphic to an alternating group. 

	If $x$ induces an inner automorphism on $E$, then we define $H := E$ and identify $x$ with the corresponding element in $E$. Otherwise, we define $H := E:\<x\>$. In both cases, $H$ is almost simple, $E = H'$ has index at most $2$ in $H$, and by assumption $|C_E(x)| \leq 4$. This implies that $|C_H(x)| \leq 8$.
	By Theorem 1.5 in Chapter 5 of \cite{Suz} (using the method of Brauer and Fowler), there exists a non-trivial $w \in H$ such that $|H : C_H(w)| \leq |C_H(x)|^2 \leq 8^2 = 64$. 

	If $w \in E$, then $C_E(w) \neq E$ because $E$ is non-abelian simple, and we find a maximal subgroup $M$ of $E$ such that $C_E(w) \leq M < E$. Note that $|E : C_E(w)| \leq |H : C_H(w)| \leq 64$ by our previous considerations and that $M$ is core-free because $E$ is simple. If $w \notin E$, then we find a maximal subgroup $M$ of $H$ such that $C_H(w) \leq M < H$. Since $w \in M \setminus E$ and $E$ is the only non-trivial proper normal subgroup of $H$, the subgroup $M$ must also be core-free. In any case, we find an almost simple group $G$ such that $E = G'$ and an element $w \in G$ as well as a maximal core-free subgroup $M$ of $G$ such that $C_G(w) \leq M < G$ and $|G:C_G(w)| \leq 64$. In particular, the action of $G$ on $G/M$ by right multiplication is transitive, faithful and primitive, and $|G/M| \leq 64$. Since $1 \neq w \in C_M(w) = C_G(w)$, the point stabiliser $M$ contains a subgroup of index  $|M:C_G(w)| = |G:C_G(w)| / |G:M| \leq 64 / |G/M|$ with non-trivial centre. The following \texttt{GAP} code tests whether the latter condition is satisfied for a given transitive permutation group $G$ with point stabiliser $M$:
\begin{verbatim}
HasLowIndexCentraliser := G -> not IsEmpty(Filtered(
    LowIndexSubgroups(Stabiliser(G,1), Int(64 / NrMovedPoints(G))),
    M -> not IsTrivial(Centre(M))
));;
\end{verbatim}
	To summarise, $E$ must appear as the derived subgroup of a primitive, transitive and almost simple permutation group $G$ of degree at most $64$ which also satisfies the centraliser condition mentioned above. Moreover, we also saw that $E$ is not an alternating group and therefore $G$ not a symmetric group.
	The following \texttt{GAP} code uses the Primitive Permutation Groups Library \cite{PrimGrp} to iterate over all such groups $G$. It then discards the cases where $E = G'$ does not admit a suitable automorphism.
\begin{verbatim}
AllPrimitiveGroups(
    NrMovedPoints, [1..64],             IsAlmostSimple, true,
    IsSymmetricGroup, false,            IsAlternatingGroup, false,
    HasLowIndexCentraliser, true,
    G -> HasSuitableAutomorphism(DerivedSubgroup(G)), true
);
\end{verbatim}

	Evaluating this expression returns an empty list. Thus, no non-abelian simple group besides $\Alt_5$ admits an involutionary automorphism with centraliser of order at most $4$.
\end{proof}

Later, we will distinguish cases along the possibilities for the action of $E(G)$ on its orbits. In the remainder of this subsection, we prepare for this work with a few more general results.

\begin{lemma}\label{semiregcomp}
	Suppose that Hypothesis \ref{hyp} holds and that $E \leq G$ is a quasi-simple subgroup that acts semi-regularly on $\Omega$. Suppose that there exists some non-trivial $x \in N_G(E)$ such that $|\FO(x)| = 4$. Then $E$ is isomorphic to $\Alt_5$ or $\SLi_2(5)$ and $|C_E(x)| = 4$.
\end{lemma}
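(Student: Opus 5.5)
The plan is to exploit semi-regularity of $E$ on the fixed point set of $x$. Then I reduce to an automorphism of prime order of $E/Z(E)$ with tiny centraliser, and finally invoke Lemma~\ref{ceninvsimple}.

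\emph{Step 1: $|C_E(x)|$ divides $4$.} Since $C_E(x)$ commutes with $x$, it leaves $\FO(x)$ invariant. Because $E$ acts semi-regularly on $\Omega$, $C_E(x)$ acts semi-regularly on this $4$-set. All its orbits there have length $|C_E(x)|$, so $|C_E(x)|$ divides $4$.

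\emph{Step 2: reduction to prime order.} Let $y$ be a power of $x$ of prime order $p$. Then $\FO(x) \subseteq \FO(y)$, so by the fixity $4$ hypothesis $\FO(y)=\FO(x)$. Step~1 applied to $y$ gives that $|C_E(y)|$ divides $4$. In particular $y$ does not centralise $E$, since $|E|>4$, so $y$ induces an automorphism of order $p$ of $E$. Set $\bar E := E/Z(E)$. If $p$ does not divide $|Z(E)|$, coprime action gives $C_{\bar E}(y)=\overline{C_E(y)}$. In general the preimage of $C_{\bar E}(y)$ is controlled by $C_E(y)$ and $Z(E)$, so $|C_{\bar E}(y)|\le 4$ in any case.

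\emph{Step 3: the main case.} If $o(x)$ is even, I choose $p=2$. Then $y$ induces an involutory automorphism of the non-abelian simple group $\bar E$ whose centraliser has order at most $4$. Lemma~\ref{ceninvsimple} forces $\bar E\cong \Alt_5$, and since the Schur multiplier of $\Alt_5$ has order $2$, $E$ is $\Alt_5$ or $\SL_2(5)$.

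\emph{Step 4: pinning down $|C_E(x)|=4$.} In $\Aut(\Alt_5)\cong\Sym_5$ an involution either is a double transposition, with centraliser $E_4$ in $\Alt_5$, or a transposition, with centraliser $\Sym_3$ of order $6>4$. So only the first type occurs, and $|C_{\bar E}(y)|=4$. For $E\cong\Alt_5$ we get $|C_E(y)|=4$. For $E\cong\SL_2(5)$ the centraliser contains $Z(E)$, and the preimage structure together with the divisibility from Step~1 forces $|C_E(y)|=4$. Since $C_E(x)\le C_E(y)$ and $C_E(x)$ still contains the relevant part (at least $Z(E)$ in the $\SL_2(5)$ case), I would compare orders again with Step~1 to conclude $|C_E(x)|=4$. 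This step may need an extra argument when $o(x)>2$; here I would use that $C_E(y)$ acts regularly on $\FO(x)$ and $x$ commutes with it.

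\emph{Main obstacle: $o(x)$ odd.} Then $y$ has odd prime order $p$, and $C_{\bar E}(y)$ is a $2$-group of order at most $4$. I would first try a counting argument. The element $x$ permutes the $E$-orbits, which all have length $|E|$. On each $x$-invariant orbit its fixed points form a single regular $C_E(x)$-orbit, giving congruences modulo $p$ that should contradict divisibility of $|E|$. Failing that, I would invoke known results on simple groups admitting an automorphism of odd prime order with centraliser a $2$-group of order at most $4$. These show such groups are soluble: Thompson's theorem when the centraliser is trivial, and results in the style of Hartley and Fong otherwise. This contradicts quasi-simplicity, so $o(x)$ must be even.
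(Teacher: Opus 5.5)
The structure of your outline is the paper's proof. Semi-regularity of $C_E(x)$ on the $4$-set $\FO(x)$ gives that $|C_E(x)|$ divides $4$. Odd prime order is then excluded. For an involution you pass to $E/Z(E)$, apply Lemma~\ref{ceninvsimple}, and read off $|C_E(y)|=4$ in $\Alt_5$ or $\SLi_2(5)$. Your vague ``controlled by'' step is fine: $e\mapsto[e,y]$ is a homomorphism from the preimage of $C_{E/Z(E)}(y)$ into $Z(E)$ with kernel $C_E(y)$, so $|C_{E/Z(E)}(y)|$ divides $|C_E(y)|$.

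Your Step~4 is more careful than the paper's ``without loss $x$ has prime order''. The commutation you want is immediate: for $c\in C_E(y)$ and $\beta\in\FO(x)=\FO(y)$ we have $\beta^{c^{-1}}\in\FO(x)$. Hence $[c,x]\in E\cap G_\beta=1$, and so $C_E(x)=C_E(y)$.

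The gap is the case you call the main obstacle. It is exactly the one step that needs outside input, and neither of your routes supplies it. The counting cannot work. If $y$ fixes $\beta$, then $(\beta^e)^y=\beta^{e^y}$ identifies the action of $y$ on $\beta^E$ with its conjugation action on $E$. So every congruence you can extract reduces to $|E|\equiv|C_E(y)|\pmod p$, which holds for every automorphism of prime order (e.g.\ $60\equiv 4\pmod 7$) and gives no contradiction.

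Thompson's theorem only covers the case $C_E(y)=1$. Results of Hartley or Fong type are boundedness statements in terms of $|C_E(y)|$; they do not by themselves exclude this configuration. What you need is precisely this: a non-soluble group admits no automorphism of odd prime order whose centraliser has order dividing $4$. The paper takes this from the main result of \cite{Fu1984}.

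Alternatively, you could argue as follows. An inner such automorphism of $E/Z(E)$ would put an element of order $p$ into a centraliser of order dividing $4$, which is impossible. For the outer automorphisms of odd prime order of simple groups of Lie type (field, diagonal, triality), you would then verify via the classification that the centraliser is much larger. With that input made explicit, your argument is complete.
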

\begin{proof}
	Since the number of fixed points of non-trivial elements does not exceed four, we may assume without loss that $x$ has prime order. We note that $C_E(x)$ stabilises the set $\FO(x)$, which has size $4$, and then our hypothesis implies that $C_E(x)$ acts semi-regularly on it. In particular $|C_E(x)|$ divides $4$. Now the main result in \cite{Fu1984} states that $x$ is an involution because $E$ is not soluble.
	For $\bar E := E / Z(E)$, Lemma 2.24 in \cite{Isaacs} gives that $|C_{\bar E}(x)| \leq |C_E(x)| \leq 4$. Then Lemma \ref{ceninvsimple} yields that $\bar{E} \cong \Alt_5$. Since $\Alt_5$ has a Schur multiplier of order $2$ and a unique double cover, namely $\SL_2(5)$, we obtain that $E$ is isomorphic to one of these two groups. The structure of their automorphism groups implies that $|C_E(x)| \geq 4$ and therefore $|C_E(x)| = 4$.
\end{proof}

\begin{lemma}\label{normalquasiproduct}
	Suppose that Hypothesis \ref{hyp} holds, that $n \in \N$, $n \geq 2$, and that $L_1,...,L_n$ are pair-wise distinct components of $G$ such that their product $N := L_1 \ast \cdots \ast L_n$ is a normal subgroup of $G$. Then $N$ acts faithfully and non-regularly on $\alpha^N$.
\end{lemma}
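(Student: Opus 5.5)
Faithfulness follows from Remark~\ref{CompInd}~(a) applied to $N$. Non-regularity is the real content: I would assume that $N$ acts regularly on $\alpha^N$ and derive a contradiction.

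\emph{Faithfulness.} By Remark~\ref{CompInd}~(a), the central product $N$ has no proper subgroup of index at most $4$, so $|\alpha^N| = |N:N_\alpha| \geq 5$. The kernel of $N$ on $\alpha^N$ is normal in $N$. By Lemma~\ref{normalaction}, the permutation groups $(N,\alpha^N)$ and $(N,\omega^N)$ are isomorphic for every $\omega \in \Omega$, so this kernel is also the kernel on every $N$-orbit. Hence it fixes all of $\Omega$, and by faithfulness of $G$ it is trivial.

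\emph{Setting up the contradiction.} Suppose $N$ acts regularly on $\alpha^N$. By Lemma~\ref{normalaction}, $N$ then acts semi-regularly on $\Omega$, and in particular so does each $L_i$. Since $N \unlhd G$, the group $G$ permutes the components $L_1,\dots,L_n$. Two cases arise: either $G_\alpha$ normalises every $L_i$, or some element of $G_\alpha$ moves some $L_i$.

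\emph{Key step: using a fixed-point-rich element.} I would pick $x \in G_\alpha^\#$ of prime order with the largest possible number of fixed points, and use $|C_N(x)| = |C_N(x):C_{N_\alpha}(x)| \leq 4$ from Lemma~\ref{normaliser}~(ii), which holds because $N_\alpha = 1$.

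\begin{itemize}
\item If $x$ normalises each $L_i$, then $C_N(x)$ contains the product of the $C_{L_i}(x)$ modulo centres. Fu's result \cite{Fu1984} rules out fixed-point-free automorphisms of prime order of a non-soluble group, so each $C_{L_i}(x)$ is nontrivial modulo $Z(L_i)$. Combining this with Lemma~\ref{ceninvsimple}, each $C_{L_i}(x)$ has order at least $4$ modulo the centre when $x$ is an involution. With $n \geq 2$ this gives $|C_N(x)| > 4$, a contradiction. (For odd $p$, Fu alone already gives a nontrivial centraliser in each factor, and their product then has order at least $9 > 4$.)
\item If $x$ permutes the components nontrivially, $x$ has some orbit $L_{1}, L_{1}^x, \dots$ of length $p$. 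Then $C_N(x)$ contains a diagonal copy $\{ y y^x \cdots y^{x^{p-1}} \mid y \in L_1\}$, which is isomorphic to a quotient of $L_1$ by a central subgroup. This has order larger than $4$, a contradiction.
\end{itemize}

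To cover every $x \in G_\alpha^\#$ rather than only one well-chosen element, I simply apply the above to an arbitrary element of prime order in $G_\alpha$. Such an element exists because $G_\alpha \neq 1$: the action of $G$ has fixity $4$, so it is not regular.

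\emph{Main obstacle.} The delicate point is the invariant-component case when $x$ is an involution and some $L_i$ is $\Alt_5$ or $\SL_2(5)$, where $|C_{L_i}(x)|$ can be as small as $4$. Here the product argument must be done carefully, since the $L_i$ may share central elements. I would pass to $\bar N = N/Z(N)$, a direct product of simple groups, using $|C_{\bar N}(x)| \leq |C_N(x)|$ in the spirit of Lemma 2.24 of \cite{Isaacs}. Then $|C_{\bar N}(x)| \geq 4^n \geq 16 > 4$, since each factor $C_{\bar L_i}(x)$ is nontrivial and has order at least $4$ by Lemma~\ref{ceninvsimple} or by direct inspection of $\Alt_5$.
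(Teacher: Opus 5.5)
Your overall strategy is the paper's: a diagonal subgroup when $x$ moves a component, and a centraliser count when $x$ normalises every component. There is, however, a genuine gap in the invariant-component case when $x$ has odd prime order $p$.

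\textbf{The odd-prime case.} All you have established there is that $x$ is not fixed-point-free on each $\bar L_i = L_iZ(N)/Z(N)$, that is, $|C_{\bar L_i}(x)| \ge 2$. Nothing gives $|C_{\bar L_i}(x)| \ge 3$, so ``at least $9$'' is unsupported. Indeed, $n=2$ with $|C_{\bar L_1}(x)| = |C_{\bar L_2}(x)| = 2$ gives $|C_{\bar N}(x)| = 4$, which is compatible with $|C_N(x)| \le 4$, so no contradiction arises.

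The missing input is the actual content of \cite{Fu1984} as used in Lemma~\ref{semiregcomp}: an automorphism of prime order of a non-soluble group whose centraliser has order dividing $4$ is an involution. The paper secures this by taking $x \in G_\alpha$ of prime order with exactly four fixed points. Such an element exists, since a prime-order power of an element with four fixed points still has exactly four. Then each $C_{L_i}(x)$ acts semi-regularly on the $4$-set $\FO(x)$, and Lemma~\ref{semiregcomp} forces $p=2$, $L_i \cong \Alt_5$ or $\SL_2(5)$, and $|C_{L_i}(x)| = 4$. Your switch to an arbitrary element of prime order discards exactly this property. Within your own set-up you could repair it by applying the same result of \cite{Fu1984} to $\bar L_1$ when $|C_{\bar L_1}(x)| = 2$.

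Once $p=2$ is known, your passage to $\bar N = N/Z(N)$ together with Lemma~\ref{ceninvsimple} is a correct and slightly cleaner alternative to the paper's count $|C_{L_1\ast L_2}(x)| \ge 16/2$. The inequality $|C_{\bar N}(x)| \le |C_N(x)|$ needs no coprimality: $e \mapsto [e,x]$ is a homomorphism from the preimage of $C_{\bar N}(x)$ into $Z(N)$ with kernel $C_N(x)$.

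\textbf{The faithfulness step.} Your faithfulness argument contains an invalid step. Lemma~\ref{normalaction} identifies $(N,\alpha^N)$ with $(N,\alpha^{gN})$ only up to the automorphism of $N$ induced by $g$. So the kernel on $\alpha^{gN}$ is $K^g$, which in general differs from $K$. For instance, in $C_3 \wr C_2$ on six points, $N = A \times B$ has kernel $B$ on one orbit and $A$ on the other.

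The correct argument is the one the paper uses, and you had already set it up. First, $N \not\le G_\alpha$, because $1 \ne N \trianglelefteq G$ and $G$ is faithful. Hence $|\alpha^N| \ge 5$ by Remark~\ref{CompInd}~(a). Every element of $K$ fixes all of $\alpha^N$, so $K=1$ by the fixity $4$ hypothesis.
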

\begin{proof}
	First we note that $N \not\leq G_\alpha$ and thus $|\alpha^N| > 1$, because the action of $G$ on $\Omega$ is faithful. In fact, we know that $|\alpha^N| > 4$ by Remark \ref{CompInd}~(a). Then our main hypothesis implies that the action of $N$ on $\alpha^N$ is faithful.\
	Assume for a contradiction that $N$ acts regularly on $\alpha^N$ and let $x \in G_\alpha$ be of prime order $p$ such that $|\FO(x)| = 4$. If there exists some $i \in \{1,...,n\}$ such that $L_i^x \neq L_i$, then $L_i,L_i^x,...,L_i^{x^{p-1}}$ are pairwise distinct components of $G$ and $L := \{ l \cdot l^x \cdots l^{x^{p-1}} \mid l \in L_i \} \leq N$ is a quasi-simple subgroup of $G$ that is centralised by $x$. Now $60 \leq |L| \leq |C_N(x)|$, which contradicts Lemma \ref{normaliser}~(ii) and our assumption that $N$ acts regularly on $\alpha^N$. Consequently, every component in $N$ is normalised by $x$. Now we can apply Lemma \ref{semiregcomp} and obtain $|C_{L_i}(x)| = 4$ and $|L_i \cap L_j| \leq 2$ whenever $i,j \in \{1,...,n\}$ are distinct. In particular, $|C_N(x)| \geq |C_{L_1 \ast L_2}(x)| \geq \frac{4 \cdot 4}{2} = 8$, which is impossible by Lemma \ref{normaliser}~(ii). Hence $N$ does not act regularly on $\alpha^N$, which concludes the proof.
\end{proof}

\begin{prop}\label{onecomp}
	Suppose that the group $G$ acts transitively, faithfully and non-regularly with fixity at most $4$ on a set $\Omega$ and that $E(G) \neq 1$. Then $E(G)$ is quasi-simple.
\end{prop}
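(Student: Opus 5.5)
We argue by contradiction: suppose $E := E(G)$ is a central product of at least two components. The plan is to bound centralisers of elements with many fixed points via Lemma~\ref{normaliser}~(ii), and then to play this bound off against the size of components commuting with such elements.

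Choose $\alpha \in \Omega$ and $x \in G_\alpha$ of prime order $p$ with the maximal number $k' \le 4$ of fixed points; non-regularity guarantees $x$ exists. Lemma~\ref{normaliser}~(ii) gives $|C_G(x) : C_{G_\alpha}(x)| \le 4$.

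First consider the case where $x$ permutes the components nontrivially. Then some component $L$ lies in an $\langle x\rangle$-orbit of length $p$, and $L^* := \{ l l^x \cdots l^{x^{p-1}} \mid l \in L\}$ is a quasi-simple subgroup centralised by $x$. By the index bound, $L^* \cap G_\alpha$ has index at most $4$ in $L^*$, so Remark~\ref{CompInd}~(a) forces $L^* \le G_\alpha$. Applying the same argument to every point fixed by $x$ shows that $L^*$ fixes $\FO(x)$. Moreover $L^*$ has elements of orders other than $p$ with many fixed points. I then derive a contradiction by using the conjugates $L^{*g}$ and the fact that each element of $L^*$ fixes $\alpha$. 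The cleanest route is probably to show that $C_G(L^*)$ contains $\prod_{i \ne} L_i$ up to the diagonal, and so contains other components, which again lie in $G_\alpha$. Iterating, the whole normal closure $E$ would lie in $G_\alpha$, contradicting faithfulness.

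Next consider the case where $x$ normalises every component. For each component $L$, the group $C_L(x) \le C_G(x)$, so $|C_L(x) : C_L(x)\cap G_\alpha| \le 4$. If some component $L$ satisfies $L \cap G_\alpha = 1$, then $L$ acts semi-regularly, and Lemma~\ref{semiregcomp}, possibly after replacing $x$ by an element with exactly four fixed points, gives $L \cong \Alt_5$ or $\SLi_2(5)$ with $|C_L(x)| = 4$. Otherwise, $L_\alpha \ne 1$ for some component $L$. Any second component $L'$ centralises $L_\alpha$, so by Lemma~\ref{normaliser}~(ii) applied to a nontrivial $y \in L_\alpha$ we get $|L' : L'\cap G_\alpha| \le 4$. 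Hence $L' \le G_\alpha$ by Remark~\ref{CompInd}~(a), and then $L'$ fixes every point by normality of the product of its conjugates, a contradiction. It therefore remains to rule out the situation where all components act semi-regularly. If there are at least two components, Lemma~\ref{semiregcomp} gives $|C_{L_1 * L_2}(x)| \ge 16/2 = 8$. At the same time $C_{L_1L_2}(x)\cap G_\alpha = 1$ by semi-regularity of $L_1L_2$. Checking semi-regularity of the product is where Lemma~\ref{normalquasiproduct} is used: applied to the normal product of a $G$-orbit of components, it shows that this product does not act regularly on its orbits. This contradicts the index bound of $4$.

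The main obstacle is the first case, where components are permuted, and especially the case where $x$ has fewer than four fixed points, for which Lemma~\ref{semiregcomp} is not directly available. I would first try to reduce to elements fixing exactly $k'$ points. I would also make systematic use of the fact that any nontrivial subgroup of $G_\alpha$ has its centraliser meeting $G_\alpha$ with index at most $4$, so that every component commuting with a nontrivial element of $G_\alpha$ is itself contained in $G_\alpha$.
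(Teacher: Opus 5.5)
\textbf{There is a genuine gap at the decisive step.} After ruling out components that meet a point stabiliser nontrivially, you write ``$C_{L_1L_2}(x)\cap G_\alpha=1$ by semi-regularity of $L_1L_2$'' and cite Lemma~\ref{normalquasiproduct} for this. Semi-regularity of each $L_i$ does not give semi-regularity of $L_1L_2$: a diagonal subgroup of $\Alt_5\times\Alt_5$ meets both factors trivially. Moreover, Lemma~\ref{normalquasiproduct} says the \emph{opposite} of what you need: a normal product of at least two components does not act regularly on its orbit. Its proof is exactly your computation $|C_{L_1\ast L_2}(x)|\ge 8$ set against the index bound $4$.

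So your centraliser estimates only give $C_{L_1L_2}(x)\cap G_\alpha\neq 1$, i.e.\ a nontrivial ``diagonal'' point stabiliser inside $E(G)$. That is no contradiction. The configuration ``every component acts semi-regularly, but $E(G)\cap G_\alpha\neq 1$'' is the real content of the proposition, and your proposal has no argument for it.

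The paper handles this case with a minimal counterexample:
\begin{enumerate}
\item Lemma~\ref{normalquasiproduct} guarantees that $E(G)$, and then $L_1\ast L_2$, acts transitively, faithfully and non-regularly on its orbit. So each would be a smaller counterexample, forcing $G=L_1\ast L_2$.
\item Lemma~\ref{centrereduction} gives $Z(G)=1$.
\item Your semi-regularity argument together with Lemma~\ref{semiregcomp} gives $L_1\cong L_2\cong\Alt_5$.
\item Order and Sylow arguments force $G_\alpha\cong E_4$, a diagonal subgroup.
\item Lemma~\ref{numberfpt} then shows that an involution $t\in G_\alpha$ has $3\cdot 16/4=12>4$ fixed points.
\end{enumerate}
You need a global fixed-point count of this kind. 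Local centraliser bounds alone cannot exclude the diagonal case.

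\textbf{Smaller points.}
\begin{itemize}
\item Lemma~\ref{semiregcomp} needs an element with exactly four fixed points, so your approach does not cover fixity $2$ or $3$. Here the centraliser bound alone does not help: an element of order $3$ inducing an inner automorphism of $\Alt_5$ has centraliser of order $3$. The paper disposes of fixity $1$ via Frobenius groups, and of fixity $2$ and $3$ via Lemma~4.4 of \cite{HW} and Lemma~4.3 of \cite{MW3corr}.
\item In your first case, the route via $C_G(L^*)$ breaks down when the components form a single $\langle x\rangle$-orbit. Instead, take a non-central $l\in L$ of order at least $5$. Then $\langle l\rangle$ centralises $1\neq y=l\,l^x\cdots l^{x^{p-1}}\in L^*\le G_\alpha$, so $|C_G(y):C_{G_\alpha}(y)|\le 4$ forces $\langle l\rangle\cap G_\alpha\neq 1$. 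This puts you back in your second case.
\item ``$L'$ fixes every point by normality of the product of its conjugates'' should be argued as follows. Once one component meets $G_\alpha$ nontrivially, every other component lies in $G_\alpha$ by Remark~\ref{CompInd}~(a). Hence so does the first one, so $E(G)\le G_\alpha$, which contradicts faithfulness because $E(G)\trianglelefteq G$.
\end{itemize}
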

\begin{proof}
	Assume otherwise and let $G$ be a minimal counterexample. If $G$ acts with fixity $1$, then $G$ is a Frobenius group and $E(G) = 1$. If $G$ acts with fixity $2$ or $3$, then Lemma 4.4 in \cite{HW} and Lemma 4.3 in \cite{MW3corr}, respectively,
	 state that $E(G)$ is quasi-simple. 
	Hence $G$ acts with fixity $4$.
	Let $n \in \N$ and $L_1,...,L_n$ be components of $G$ such that $E(G) = L_1 \ast \dots \ast L_n$. Then $n \geq 2$ because $G$ is a counterexample. Now we proceed in multiple steps.\\

	\emph{(1) $G = L_1 \ast L_2$.}\\
	According to Lemma \ref{normalquasiproduct}, the normal subgroup $E(G)$ would be a counterexample to our result if it was a proper subgroup. Hence $G = E(G)$ because of the minimality of $G$. If there are three or more components, then this implies that $L_1 \ast L_2$ is a proper normal subgroup of $G = E(G)$, which would give a smaller counterexample by Lemma \ref{normalquasiproduct}.\\

	\emph{(2) $Z(G) = 1$.}\\
	Lemma \ref{centrereduction} states that $G/Z(G)$ acts transitively, faithfully, non-regularly and with fixity at most $4$ on the set of $Z(G)$-orbits. If $Z(G)$ was non-trivial, then $G/Z(G)$ would be a smaller counterexample, contrary to the choice of $G$.\\

	\emph{(3) $L_1$ and $L_2$ act semi-regularly on $\Omega$, and both groups are isomorphic to $\Alt_5$.}\\
	Assume otherwise and choose $i \in \{1,2\}$ such that there exists a non-trivial $x \in L_i \cap G_\alpha$ for some $\alpha \in \Omega$. Since $[x,L_{3-i}] = 1$, Lemma \ref{normaliser}~(ii) implies that a subgroup of index at most $4$ in $L_{3-i}$ is contained in $G_\alpha$. According to Remark \ref{CompInd}~(a), this is only possible if $L_{3-i} \leq G_\alpha$, which contradicts the faithful action of $G$. Now Lemma \ref{semiregcomp} is applicable: both components are isomorphic to $\Alt_5$ or to $\SLi_2(5)$. Since $Z(G) = 1$ by (2), it follows that $L_1 \cong L_2 \cong \Alt_5$.\\

	\emph{(4) $G_\alpha$ is elementary abelian of order $4$.}\\
	If $5 \in \pi(G_\alpha)$, then Lemma \ref{normaliser}~(v) implies that a Sylow $5$-subgroup $P$ of $G$ is contained in $G_\alpha$. But then $1 \neq L_1 \cap S \leq L_1 \cap G_\alpha$, contrary to (3). If $2^3$ or $3^2$ were divisors of $|G_\alpha|$, then $|L_1 \cap G_\alpha|$ would be divisible by $2$, or by $3$, respectively, and this contradicts (3) again. Hence $|G_\alpha|$ divides $12$.\\
	If $|G_\alpha| = 3$, then the number of fixed points of every non-trivial element in $G_\alpha$ would be divisible by $3$, because $|\Omega| = |G:G_\alpha|$ is divisible by $3$.  
    But this is impossible because $G$ acts with fixity $4$. Hence there exists some involution $t \in G_\alpha$. Choose $S \in \syl_2(G)$ to be such that $t \in S$. Then $t \in Z(S)$ and Lemma \ref{normaliser}~(ii) yields that $|S:S_\alpha| \leq 4$. It follows that $|S_\alpha|$, and therefore $|G_\alpha|$ as well, are divisible by $4$. Now $|G_\alpha| \in \{ 2^2, 2^2 \cdot 3 \}$. If $|G_\alpha|= 12$, then we take 
	$P \in \syl_3(G_\alpha)$ and we see using (3) that $P \cap L_1 = P \cap L_2 = 1$. Therefore $N_G(P) \cong (C_3 \times C_3) : C_2$ and Lemma \ref{normaliser}(i) states that $|N_G(P):N_{G_\alpha}(P)| \leq 4$, which is only possible if $|N_{G_\alpha}(P)| = 6$. But then $G_\alpha$ contains exactly two Sylow $3$-subgroups, which contradicts Sylow's theorem. This proves that $|G_\alpha|=4$. The 2-structure of $G$ (see (1) and (3)) implies that $G_\alpha$ is not cyclic.\\

	We can now deduce a final contradiction.
	Since $G \setminus (L_1 \cup L_2)$ has a unique conjugacy class of involutions, we know that every involution $t \in G_\alpha^\#$ satisfies $|C_G(t)| = 16$ (by (3)), hence we can apply Lemma \ref{numberfpt}. Since $G_\alpha$ is elementary abelian, it follows that $t$ has $\frac{3 \cdot 16}{4} = 12 > 4$ fixed points, contrary to the fact that $G$ acts with fixity $4$. Thus there is no counterexample to the assertion of the proposition.
\end{proof}

\bigskip
\begin{lemma}\label{Enonreg}
Suppose that Hypothesis \ref{hyp} holds and that $E(G) \neq 1$. Then $E(G)/Z(E(G))\cong \Alt_5$ or $E(G)\cap G_{\alpha}\neq 1$.
		
\end{lemma}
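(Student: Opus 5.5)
Set $E := E(G)$ and suppose that $E \cap G_\alpha = 1$; we show $E/Z(E) \cong \Alt_5$. By Proposition~\ref{onecomp} (applicable since the action is faithful, transitive and non-regular with fixity $4$), $E$ is quasi-simple. Since $E \trianglelefteq G$ and all point stabilisers are conjugate, $E \cap G_\omega = 1$ for every $\omega \in \Omega$, i.e. $E$ acts semi-regularly on $\Omega$. The plan is then to produce a non-trivial element $x \in N_G(E) = G$ with exactly four fixed points and invoke Lemma~\ref{semiregcomp}.

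Because $G$ acts with fixity $4$, there is by definition some $x \in G^\#$ with $|\FO(x)| = 4$; replacing $x$ by a suitable power we may even take $x$ of prime order, although Lemma~\ref{semiregcomp} already reduces to that case itself. As $E$ is normal, $x \in N_G(E)$ trivially. Lemma~\ref{semiregcomp}, applied to the quasi-simple semi-regular subgroup $E$ and this $x$, yields $E \cong \Alt_5$ or $E \cong \SLi_2(5)$. In both cases $E/Z(E) \cong \Alt_5$, which is the claim.

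There is essentially no obstacle: the only points to check are that the hypotheses of Proposition~\ref{onecomp} hold (non-regularity is automatic since fixity $4$ forces non-trivial point stabilisers) and that semi-regularity of $E$ follows from $E \cap G_\alpha = 1$ via normality. All the real work is contained in Lemma~\ref{semiregcomp}, and through it in Lemma~\ref{ceninvsimple}.
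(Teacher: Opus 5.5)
Your proposal is correct and follows essentially the same route as the paper: assume $E(G)\cap G_\alpha=1$, get semi-regularity from normality, use Proposition~\ref{onecomp} to make $E(G)$ quasi-simple, and then apply Lemma~\ref{semiregcomp} to conclude $E(G)\cong\Alt_5$ or $\SLi_2(5)$. Your explicit checks of the hypotheses (non-regularity from fixity $4$, and $N_G(E)=G$ providing an element with four fixed points) are exactly the details the paper leaves implicit.
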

\begin{proof}
	Suppose that $E(G)\cap G_{\alpha}=1$.
	Since $E(G)\trianglelefteq G$, it follows that $E(G)$ acts semi-regularly on~$\Omega$.
	By Proposition~\ref{onecomp}, $E(G)$ is quasi-simple and therefore the unique component of $G$.
	Then by Lemma~\ref{semiregcomp}, $E(G)/Z(E(G))\cong \Alt_5$.
\end{proof}

\bigskip\noindent
In the case that the unique component acts semi-regularly, we can give some more information regarding the structure of the point stabilisers.

\bigskip
\begin{lemma}\label{regularGaoptions}
	Suppose that Hypothesis \ref{hyp} holds and that $E(G)$ acts semi-regularly on $\Omega$.
	If $E(G)\cong \Alt_5$, then the point stabilisers of $G$ are of isomorphism type
	$C_2, E_4, C_4, \Sym_3$, or $\Alt_4$.
	If $E(G)\cong \SLi_2(5)$, then the point stabilisers of~$G$ are of isomorphism type
	$C_2, E_4$, or $C_4$.
\end{lemma}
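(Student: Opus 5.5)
Set $E := E(G)$. Since $E \cap G_\alpha = 1$, every element of $G_\alpha$ induces, by conjugation, an automorphism of $E$, and the kernel of this map is $C_{G_\alpha}(E)$.

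The first step is to show that $G_\alpha$ embeds into $\aut(E)$, i.e. that $C_{G_\alpha}(E) = 1$. Take $1 \neq x \in C_{G_\alpha}(E)$. Then $E \le C_G(x)$, and Lemma~\ref{normaliser}~(ii) gives $|E : E \cap G_\alpha| \le 4$. Since $E \cap G_\alpha = 1$, this means $|E| \le 4$, which is absurd. Hence $G_\alpha$ is isomorphic to a subgroup of $\aut(E)$. This group is $\Sym_5$ if $E \cong \Alt_5$ and $\Sym_5$ again if $E \cong \SL_2(5)$, the latter because $\aut(\SL_2(5)) \cong \aut(\Alt_5)$ and the centre acts trivially.

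The key constraint comes from the same lemma applied to centralisers. For every $x \in G_\alpha^\#$ we get $|C_E(x)| = |C_E(x) : C_{E\cap G_\alpha}(x)| \le 4$. So every nontrivial element of $G_\alpha$ acts on $E$ as an automorphism whose centraliser in $E$ has order at most $4$. I will go through the elements of $\Sym_5$:
\begin{itemize}
\item Elements of order $5$ centralise a subgroup of order $5$ in $\Alt_5$ and order $10$ in $\SL_2(5)$, so they are excluded.
\item Elements of order $3$ centralise $C_3$ in $\Alt_5$, which is allowed, but $C_6$ in $\SL_2(5)$, which is excluded.
\item Products of a $3$-cycle and a transposition (order $6$) have a power of order $3$ and are handled through that power.
\item Involutions in $\Alt_5$ centralise $E_4$ in $\Alt_5$; in $\SL_2(5)$ the preimage is $Q_8$, and I must check that the centraliser there has order $4$, namely the preimage of the fixed part, which is cyclic of order $4$.
\item Transpositions centralise $\Sym_3$ (order $6$) in $\Alt_5$, so they are excluded for $\Alt_5$. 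For $\SL_2(5)$ an outer involution may centralise a group of order $4$; I will check the appropriate lifts.
\item $4$-cycles centralise a group of order $2$ in $\Alt_5$.
\end{itemize}

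From this, $G_\alpha$ is a subgroup of $\Sym_5$ with no element of order $5$, and for $\Alt_5$ no transpositions (and no elements of order $6$, since their cube is a transposition). The subgroups of $\Sym_5$ avoiding $5$-elements and transpositions are exactly $C_2$, $E_4$, $C_4$, $C_3$, $\Sym_3$ (generated by a $3$-cycle and a double transposition, or a $3$-cycle and a $(12)(45)$-type element), $\Alt_4$, $D_8$ and $C_3 \rtimes C_4$. For $\SL_2(5)$ we additionally forbid elements of order $3$, which leaves the $2$-subgroups.

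Next I eliminate the groups not in the list, namely $C_3$, $D_8$, $C_3\rtimes C_4$ and, for $\SL_2(5)$, $D_8$.
\begin{itemize}
\item For $G_\alpha \cong C_3$: $|\Omega|$ is divisible by $3$, so the number of fixed points is divisible by $3$. Fixity $4$ would then force $3$ fixed points. I will try to rule this out by arguing, as in step (4) of Proposition~\ref{onecomp}, that a point stabiliser of order $3$ gives fixity $3$, not $4$, for all nontrivial elements.
\item For $D_8$ and $C_3\rtimes C_4$: $D_8$ contains transpositions, so it is already excluded for $\Alt_5$. $C_3\rtimes C_4$ contains a $4$-cycle squaring to a double transposition and also transpositions, so it is excluded too. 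For $\SL_2(5)$, I will use Lemma~\ref{numberfpt} or the Sylow argument (an involution $t \in Z(S)$ gives $|S:S_\alpha|\le 4$) to bound $|G_\alpha|_2 \le 4$.
\end{itemize}

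I expect the main obstacle to be the $\SL_2(5)$ involution analysis, where the outer involutions have subtle lifts, and excluding $C_3$. For both I plan to use Lemma~\ref{numberfpt} together with the explicit structure of $E\langle x\rangle$.
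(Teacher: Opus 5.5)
Your treatment of $E\cong\Alt_5$ follows the paper's route. You embed $G_\alpha$ into $\aut(E)\cong\Sym_5$ and use $|C_E(x)|\le 4$ from Lemma~\ref{normaliser}~(ii) to forbid $5$-elements and transpositions. Your divisibility argument disposing of $G_\alpha\cong C_3$ is correct, and the paper's proof in fact passes over that case silently. Two minor slips: $C_3\rtimes C_4$ is not a subgroup of $\Sym_5$, since $N_{\Sym_5}(\langle(123)\rangle)\cong\Sym_3\times C_2$; and $D_8$ should not appear in your list of transposition-free subgroups.

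The gap is in the $\SL_2(5)$ case. Once $3$- and $5$-elements are excluded, $G_\alpha$ is a $2$-subgroup of $\Sym_5$. Because the lemma only records isomorphism types, the one thing you still must exclude is $G_\alpha\cong D_8$, and your plan does not do this. You suggest an outer involutory automorphism of $\SL_2(5)$ might centralise only $4$ elements. That is false: conjugation by $\left(\begin{smallmatrix}0&1\\2&0\end{smallmatrix}\right)\in\GL_2(5)$ centralises the norm-one elements of $\GF(25)$, a cyclic group of order $6$. Your fallback also fails. An involution $t\in Z(S)\cap G_\alpha$ gives $|S:S_\alpha|\le 4$, which is a \emph{lower} bound on $|S_\alpha|$, not the upper bound $|G_\alpha|_2\le 4$ you need; moreover $Z(S)$ need not meet $G_\alpha$ in general.

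The missing idea, which the paper uses, is the inequality $|C_{\bar E}(\bar x)|\le |C_E(x)|$ for $\bar E=E/Z(E)$ and \emph{any} $x$ normalising $E$; no coprimality is needed. Put $H:=E\langle x\rangle$. The class $x^H$ lies in the $|\bar x^{\bar H}|$ cosets of $Z(E)$ over $\bar x^{\bar H}$, so $|C_H(x)|\ge |C_{\bar H}(\bar x)|$. Since $C_H(x)=C_E(x)\langle x\rangle$ and $C_{\bar H}(\bar x)=C_{\bar E}(\bar x)\langle \bar x\rangle$, with $|H:E|=|\bar H:\bar E|$, the claim follows. Hence a transposition-type $x$ has $|C_E(x)|\ge 6$, and all your $\Alt_5$ exclusions transfer verbatim to $\SL_2(5)$. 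Then $D_8$ is ruled out because it contains transpositions. Together with your correct coprime-action exclusion of $3$-elements, this leaves exactly $C_2$, $E_4$ and $C_4$.
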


\begin{proof}
    Suppose that $E := E(G) \neq 1$. Since $E$ acts semi-regularly, we have that $G_\alpha \cap E = 1$. Moreover, Lemma \ref{normaliser}\,(ii) states that $|C_E(x)| \leq 4$ for all non-trivial $x \in G_\alpha$. Since $|E| > 4$, it follows that the non-trivial elements in $G_\alpha$ induce non-trivial automorphisms of $E$. In this way, we can identify $G_\alpha$ with a subgroup of $\aut(E)$.

    Suppose that $E \cong \Alt_5$ and identify $\aut(E)$ with $\Sym_5$, $E$ with $\Alt_5$ and $G_\alpha$ with a subgroup $H \leq \Sym_5$. By Lemma \ref{normaliser}(ii), we have that $|C_E(x)| \leq 4$ for all $x \in H^\#$. If $x$ is a $5$-cycle, then $|C_E(x)| = 5 > 4$, and if $x$ is a transposition, then $|C_E(x)| = 6 > 4$. It follows that $H$ is a $\{2,3\}$-group that does not contain any transpositions. Hence $H$ is of isomorphism type $C_2$, $E_4$, $C_4$, $\Sym_3$, or $\Alt_4$, as desired.

    Next, we suppose that $E \cong \SL_2(5)$ and let $\bar E := E/Z(E) \cong \Alt_5$. By Corollary 2.24 in \cite{Isaacs}, $|C_{\bar E}(\bar x)| \leq |C_E(x)|$
    for all $x \in G_\alpha$. Thus we can use the previous paragraph to see that $G_\alpha$ is isomorphic to one of the five groups listed there. If $x \in G_\alpha$ has order $3$, then $Z(E) \leq C_{E}(x)$ and $|C_{\bar E}(\bar x)|$ must be divisible by $3$ because $3 \in \pi(\bar{E})$. Then coprime action gives that $|C_E(x)| \geq 6$, which contradicts Lemma \ref{normaliser}\,(ii). Hence $G_\alpha$ is a $2$-group and the groups $C_2$, $E_4$ as well as $C_4$ are the only remaining possibilities.
\end{proof}

\begin{ex}\label{exRegComp}
    We show that all the cases in Lemma \ref{regularGaoptions} actually occur. Let $E := \Alt_5$ and let $H \leq \Sym_5$ be one of the following groups:
    
    $\< (12)(34) \> \cong C_2$, $\< (12)(34), (13)(24) \> \cong E_4$, $\< (1234) \> \cong C_4$, $\< (123),(12)(45) \> \cong \Sym_3$ or\\ $\< (123), (234) \> \cong \Alt_4$. 
    
    In every case, a small calculation shows that $\max \{ |C_E(\varphi)| \mid \varphi \in H^\#\} = 4$. According to Lemma \ref{RegNSG}, $G := E:H$ acts with fixity $4$ on $\Omega := G / H$ such that $E = E(G)$ acts regularly.

    In the same way, we can construct the examples for $E := \SL_2(5)$ by choosing $H$ as a non-trivial $2$-subgroup of $\PSL_2(5)$ or by choosing $H$ to be generated by an element of order $4$ in $\PGL_2(5)$.
\end{ex}

For the remainder of our analysis, we narrow down the possibilities for the action of $E(G)$ on one of its orbits, in particular the possibilities for the fixity.
Then, in each case, we can refer to earlier work and write down all cases exactly. This will lead to all possible structures of $F^*(G)$, and then we can prove our main theorem in the next section.

\subsection{Components that act with fixity 2 on their orbits}

\begin{lemma}\label{Efix2orbits}
    Suppose that Hypothesis \ref{hyp} holds and that $E(G)$ acts with fixity $2$ on $\alpha^{E(G)}$. Then $E(G)$ has at most two orbits on $\Omega$.
\end{lemma}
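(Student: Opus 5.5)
Set $E := E(G)$. By Proposition~\ref{onecomp}, $E$ is quasi-simple. Since $E$ acts with fixity $2$ on $\alpha^E$ and $|\alpha^E| \geq 5$ by Remark~\ref{CompInd}~(a), Lemma~\ref{quasisimpleFix23} shows that $E$ is simple. By Lemma~\ref{normalaction}, all $E$-orbits on $\Omega$ give isomorphic permutation groups. Hence $E$ acts with fixity exactly $2$ on every orbit.

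Choose $x \in E^\#$ with exactly two fixed points on $\alpha^E$. Replacing $x$ by a suitable power, we may take $x$ of prime order, as long as some power still has exactly two fixed points. It does, because powers only gain fixed points, and fixity $2$ caps the count at $2$. Now $x$ fixes a point in $\alpha^E$, and $\alpha^E \cong \beta^E$ as $E$-sets only up to an automorphism of $E$. So on another orbit $\beta^E$, the element $x$ acts as some $E$-conjugate of $x^{\psi}$, where $\psi \in \aut(E)$ is induced by conjugation with some $g \in G$. The plan is to count fixed points of $x$ summed over all $E$-orbits. If $m$ is the number of $E$-orbits, then $\sum_{\text{orbits}} |\fix(x)| \leq 4$.

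The key step is to show that $x$ fixes points on at least roughly half of the orbits, so that $m \geq 3$ forces more than $4$ fixed points. The cleanest route uses $H := E_\alpha$. Orbits are indexed by $G/(EG_\alpha)$, and the stabiliser in $E$ of a point $\alpha^g$ is $H^g$. Pick $1 \neq y \in H$ of prime order with $|\fix_{\alpha^E}(y)| = 2$. For $g \in N_G(\langle y\rangle)$, or more generally for $g \in C_G(y)$, the element $y$ also fixes points in $\alpha^{gE}$. By Lemma~\ref{normaliser}~(ii), $|C_G(y) : C_{G_\alpha}(y)| \leq 4$, so the image of $C_G(y)$ in the permutation group $G/E$ acting on the $m$ orbits has controlled size. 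I would argue via a Frattini-type argument: $G = E\,N_G(H)$, or at least $G = E\,N_G(\langle y\rangle)$, up to the fusion of $y$. Then $N_G(\langle y \rangle)$ meets every $E$-coset. Hence $y$, or an $E$-conjugate of a generator of $\langle y\rangle$, fixes points on every orbit. Those points come in the same number $2$, because the orbits are isomorphic and $\langle y\rangle$ is normalised. This gives $2m \leq 4$, that is $m \leq 2$.

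The main obstacle is justifying that Frattini step. One needs to know that the $E$-class of $\langle y\rangle$, or of the two-point stabiliser, is $G$-invariant. This is exactly the point where $\aut(E)$ could fuse non-$E$-conjugate subgroups. I would handle it by pigeonhole. The two-point stabilisers of $E$ on $\alpha^E$ fall into few $E$-classes of subgroups. For the groups in Table~\ref{TableAllFix} with fixity $2$, the point stabilisers are Frobenius groups or cyclic, and the nontrivial elements with two fixed points form a single $\aut(E)$-orbit of cyclic subgroups splitting into at most two $E$-classes. So an element of $G$ fixing the $E$-class of $\langle y\rangle$ stabilises at least half of the orbits on which $y$ fixes points. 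That bound would also give at most two orbits, but the bookkeeping is delicate. If it gets too messy, I would instead use Lemma~\ref{numberfpt} directly in $G$ with $U = G_\alpha$: I would compare $|y^G \cap G_\alpha|\,|C_G(y)|/|G_\alpha|$ with the $E$-version $|y^E\cap H|\,|C_E(y)|/|H| = 2$, and show that the ratio is at least $m/2$, using $|G:E| = m\,|G_\alpha : H|$.
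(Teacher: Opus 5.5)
Your first paragraph is correct and matches the paper: $E$ is quasi-simple by Proposition~\ref{onecomp}, simple by Lemma~\ref{quasisimpleFix23}, and all $E$-orbits are isomorphic, in particular of equal length, by Lemma~\ref{normalaction}. The gap is the key step, namely showing that one element of $E$ has at least two fixed points on every $E$-orbit.

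The Frattini claim $G = E\,N_G(\langle y\rangle)$ is unproved, as you note yourself, and your proposed repairs do not close it. The pigeonhole route rests on unverified assertions about $\aut(E)$-fusion of two-point stabilisers across Table~\ref{TableAllFix}. More seriously, even if granted, it only shows that $y$ fixes two points on at least half of the $m$ orbits. That gives $2\lceil m/2\rceil \le 4$, i.e.\ $m \le 4$, not $m \le 2$. The alternative via Lemma~\ref{numberfpt} has the same arithmetic problem: a ratio of at least $m/2$ gives $|\FO(y)| \ge m$, so again only $m \le 4$. As written, the argument does not prove the statement.

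The missing idea is an elementary congruence that makes any fusion analysis unnecessary. Take $x \in E_\alpha$ of prime order $p$ with exactly two fixed points on $\alpha^E$, and let $n := |\alpha^E|$, the common length of all $E$-orbits. On any orbit $\beta^E$, the non-fixed points of $x$ split into $\langle x\rangle$-orbits of length $p$. Hence $|\fix_{\beta^E}(x)| \equiv n \equiv 2 \pmod p$.
\begin{itemize}
\item If $p$ is odd, this forces $|\fix_{\beta^E}(x)| \ge 2$.
\item If $p = 2$, then $n$ is even, so $|\fix_{\beta^E}(x)|$ is even. Suppose it were $0$. Then $x$ would be a product of $n/2$ transpositions on $\beta^E$ and of $(n-2)/2$ transpositions on $\alpha^E$. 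So $x$ would be an odd permutation on one of these two orbits. The sign map of $E$ on that orbit would then have kernel of index $2$ in $E$, contradicting simplicity.
\end{itemize}
Hence $x$ fixes at least two points on every $E$-orbit, and the fixity $4$ hypothesis gives at most two orbits. This is exactly the paper's argument.
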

\begin{proof}
    Let $E := E(G)$. Since $E$ acts with fixity $2$, Lemma \ref{quasisimpleFix23} and Proposition \ref{onecomp} imply that $E$ is simple. According to our hypothesis, there exists some $x \in E_\alpha$ of prime order $p$ such that $|\fix_{\alpha^E}(x)| = 2$. It follows that $|\alpha^E| \equiv 2$ modulo $p$, and by Lemma \ref{normalaction}, every $\beta \in \Omega$ satisfies $|\beta^E| = |\alpha^E|$. If $p \geq 3$, then the congruence implies that $x$ has at least two fixed points on every $E$-orbit. If $p = 2$, then $x$ must also have at least two fixed points on every $E$-orbit: Otherwise it would induce an odd permutation on one of the orbits, and then $E$ would have a subgroup of index $2$. This is impossible. Therefore, the fixity $4$ hypothesis implies that there are at most two $E$-orbits.
\end{proof}

\begin{lemma}\label{EtransFix2}
	Let $G$ be a group that acts transitively, faithfully and with fixity at most $4$ on $\Omega$. Suppose that $E(G) \neq 1$ acts transitively with fixity $2$ on $\Omega$ and that $F(G) \neq 1$. Then $|F(G)| = 2$, $F^*(G)$ acts with fixity $4$, every involution in $F^*(G) \setminus (F(G) \cup E(G))$ has four fixed points, and one of the following holds:

	\begin{enumerate}
		\item $G \cong \Alt_5 \times C_2$ and $G_\alpha$ is isomorphic to $E_4$, $\Sym_3$ or $D_{10}$.
		\item $G \cong \Alt_5 : C_4$ and $G_\alpha \cong C_5 : C_4$.
		\item $G \cong \PSL_2(7) \times C_2$ and $G_\alpha \cong \Sym_4$.
	\end{enumerate}
\end{lemma}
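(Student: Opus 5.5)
We write $E := E(G)$ and $F := F(G)$. By Lemma~\ref{quasisimpleFix23} and Proposition~\ref{onecomp}, $E$ is simple, and it is the unique component. Since $E$ is transitive, $G = E G_\alpha$. Moreover, $[E,F] = 1$ and $E \cap F = 1$, so $F^*(G) = E \times F$.

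\textbf{Step 1: $F$ is semi-regular and has order $2$.} First, $F \cap G_\alpha = 1$: by Lemma~\ref{FfixedE1}, a non-trivial element of $F \cap G_\alpha$ would force $E = 1$. Now take $x \in E_\alpha$ with $|\FO(x)| = 2$. Then $F \le C_G(x)$ permutes $\FO(x)$. Since $F$ is semi-regular, $|F| \le 2$, so $|F| = 2$; write $F = \langle f\rangle$ with $f \in Z(F^*(G))$.

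\textbf{Step 2: fixity of $F^*(G)$ and the mixed involutions.} Since $E$ is transitive, $F^*(G)_\alpha$ projects onto $E_\alpha$. In addition, $|F^*(G) : E| = 2$ forces $|F^*(G)_\alpha : E_\alpha| = 2$, so there is some $e \in E$ with $ef \in G_\alpha$. We count fixed points with Lemma~\ref{StabInNsg}, applied to $N = E$ and the transversal $\{1, f\}$. The stabiliser $H := F^*(G)_\alpha$ is not $E_\alpha$ but a twisted copy, so instead we use Lemma~\ref{numberfpt} directly. For $y \in E_\alpha^\#$ we get $|\FO(y)| = |\fix_{\alpha^E}(y)| \le 2$ on $\Omega = \alpha^E$. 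The elements $ef \in H$ with $e \ne 1$ have centraliser $C_E(e)\times F$ and conjugacy class $e^E f$. A count then shows that the number of fixed points of such an element is twice a fixity-$2$ count for $e$, so it is at most $4$, and it equals $4$ exactly when $e$ has two fixed points in the relevant action. This count should yield both "$F^*(G)$ acts with fixity $4$'' and the claim about involutions in $F^*(G)\setminus(F \cup E)$. The cleanest route is probably to work with the subgroup $H_0 = H \cap E$ of index $2$ in $H$. We then observe that $Z(F^*(G)) = F$ is semi-regular, so $f$ swaps the two fixed points in each relevant fixed-point set, which doubles the counts.

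\textbf{Step 3: the classification.} This is the main obstacle. The key constraint is this: $H$ has the index-$2$ subgroup $H_0 = E_\alpha$, $H \cong$ its image $\bar H \le E$ under projection, and $H_0 \le \bar H$ with index $2$. Moreover, $\bar H$ acts with fixity $2$ in the sense of Step 2, via $|\fix_{E/H_0}(e)|$. So we need a fixity-$2$ action of the simple group $E$ (Table~\ref{TableAllFix}, first column, i.e.\ $\PSL_2(q)$, $\Sz(q)$, $\PSL_3(4)$) whose point stabiliser $H_0$ has an overgroup $\bar H$ with index $2$. Also, the involutions $e \in \bar H \setminus H_0$ must satisfy $|C_E(e)|$ compatible with fixity $4$: through Lemma~\ref{normaliser}(ii) on $ef$, we need $|C_E(e) : C_{H_0}(e)|\cdot 2 \le 4$ roughly, i.e.\ $C_E(e)$ is nearly inside $H$. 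I would run through the table.
\begin{itemize}
\item For Suzuki groups and $\PSL_3(4)$, and for $\PSL_2(q)$ with large $q$, the involution centralisers (dihedral of order $q\pm1$, or of order $q$) are far too large relative to $H_0$. This should exclude everything except $\Alt_5$ with $H_0 \in \{C_2, C_3, C_5\}$ and $\PSL_2(7)$ with $H_0 = \Alt_4$.
\item These survivors give $\bar H$ equal to $E_4$, $\Sym_3$, $D_{10}$, and $\Sym_4$ respectively, which are items (1) and (3).
\end{itemize}

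Finally, $G = F^*(G)G_\alpha$ with $G/F^*(G) \le \mathrm{Out}(E)\times\ldots$. If $G > F^*(G)$, then $G \le \aut(E)\times C_2$-type extensions, and checking small cases (possibly by \texttt{GAP}) should leave only $\Alt_5 : C_4$ with $G_\alpha \cong C_5 : C_4$, which is item (2).
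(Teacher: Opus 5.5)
Your outline (semi-regular $F$ of order $2$, an index-$2$ overgroup $\bar H$ of the fixity-$2$ stabiliser $H_0 = E_\alpha$, then a table search and an involution-centraliser test) is broadly the paper's. The filter you use in Step~3, however, does not work.

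For an involution $e \in \bar H \setminus H_0$, put $t = ef \in H = F^*(G)_\alpha$. Projection to $E$ gives $C_H(t) \cong C_{\bar H}(e)$, so Lemma~\ref{normaliser}\,(ii) yields $|C_E(e):C_{\bar H}(e)| \le 2$, not $|C_E(e):C_{H_0}(e)| \le 2$.

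\begin{itemize}
\item Your version, as written, gives index $4$ for the genuine examples $\Sym_3, D_{10} \le \Alt_5$ and $\Sym_4 \le \PSL_2(7)$, so it would discard them.
\item The corrected test is too weak. Take $E \cong \PSL_2(7)$ or $\PSL_2(9)$, the fixity-$2$ stabiliser $H_0 = C_4$, and $\bar H = N_E(H_0) \cong D_8$. A non-central involution $e \in \bar H$ has $C_E(e) \cong D_8$ and $C_{\bar H}(e) \cong E_4$, so both cases pass. Yet Lemma~\ref{numberfpt} (with $|t^{EF}\cap H| = 4$, $|C_{EF}(t)| = 16$, $|H| = 8$) shows that $t = ef$ has $8$ fixed points.
\end{itemize}

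The missing idea is the paper's first filter. Since $F = Z(F^*(G))$ is semi-regular of order $2$, Lemma~\ref{centrereduction} shows that $\bar E \cong E$ acts faithfully and non-regularly on the set of $F$-orbits, which as an $E$-set is $E/\bar H$. Together with Remark~\ref{CompInd}\,(b), this action has fixity $2$, $3$ or $4$. So $\bar H$ itself must occur as a stabiliser in Table~\ref{TableAllFix}. This leaves only $\Alt_5$, $\PSL_2(7)$, $\PSL_2(8)$ and $\PSL_2(9)$ with $\bar H \in \{\Sym_3, E_4, \Sym_4, D_{10}, D_{14}, D_{18}\}$. The $D_8$ cases disappear because an involution has $5$ fixed points on $E/D_8$. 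Only then does the centraliser test reduce to cases (a) and (c).

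The same oversight affects Step~2. Counting $e$-invariant $F$-orbits gives $|\FO(e)| + |\FO(ef)| = 2\,|\fix_{E/\bar H}(e)|$. So $|\FO(ef)|$ is governed by the action on $E/\bar H$, not by a fixity-$2$ count on $E/H_0$. ``At most $4$'' is just the hypothesis. Your sketch does not show that the fixity is exactly $4$, nor that every involution in $F^*(G)\setminus(F\cup E)$ has four fixed points.

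The paper proves this only after the classification. If $E \times F$ had fixity $2$, then $\bar E$ would have fixity $2$ on the $F$-orbits. That fails for $E_4 \le \Alt_5$ and $\Sym_4 \le \PSL_2(7)$, which give fixity $3$. For $\Sym_3$ and $D_{10}$ it contradicts Lemma~\ref{normaliser}\,(ii) with $k=2$. The statement about involutions then follows because $E$ has a single class of involutions.

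Finally, the case $G > F^*(G)$ is only gestured at (``checking small cases''). In the paper, $G/F$ is $\Sym_5$ or $\PGL_2(7)$, and $G_\alpha \cong \bar G_{\bar\alpha}$ has order $4|E_\alpha|$. This excludes $\PSL_2(7)$, since $\PGL_2(7)$ has no subgroup of order $48$. For $\Alt_5$, the Sylow-$2$ and Sylow-$3$ normaliser cases force $G \cong \Sym_5 \times C_2$. There, an element of $G_\alpha$ mapping to a transposition violates Lemma~\ref{normaliser}\,(ii). That leaves only $G_\alpha \cong C_5:C_4$, which is case (b).
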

\begin{proof}
	Let $E := E(G)$ and $F := F(G) \neq 1$. Since $E$ acts with fixity $2$, Lemma \ref{quasisimpleFix23} and Proposition \ref{onecomp} imply that $E$ is simple. In particular, $F^*(G) = E \times F$. If $F_\alpha \neq 1$ for some $\alpha \in \Omega$, then the facts that $E\leq N_G(F_\alpha)$ and $E \cap  N_{G_\alpha}(F_\alpha) = E_\alpha$ and  Lemma \ref{normaliser}\,(i) yield $|E:E_\alpha| \leq |N_{G}(F_\alpha): N_{G_\alpha}(F_\alpha)|
    \leq 2$. Since $E$ is simple, this is only possible if $E \leq G_\alpha$, which contradicts the faithful action of $G$. Thus, $F$ acts semi-regularly. Let $x \in E^\#$ be such that $|\FO(x)| = 2$. Then $F \leq C_G(x)$ and thus $F$ permutes the two fixed points of $x$. Since $F$ acts semi-regularly, this implies that $|F| = 2$ and thereby $F \le Z(G)$. Conversely, $Z(G) \le F$ and therefore $Z(G)=F$ has order $2$. 

	Consider the action of $\bar G := G / F$ on the set $\bar \Omega$ of $F$-orbits and let $\alpha \in \Omega$ and $\bar \alpha := \alpha^F \in \bar \Omega$. By Lemma \ref{centrereduction}, $\bar E = EF/F$ acts non-regularly and faithfully with fixity at most $4$ on $\bar \Omega$.

    Now we note two facts: 
    The first one is that $\bar E=EF/F \cong E/E \cap F \cong E$, in particular $E \cong \bar E=\overline{EF}$, because $EF=E \times F$.
    The second one is that $\alpha^E=\Omega$ by hypothesis, in particular $\alpha^E=\alpha^{EF}$. Since $|EF|=2 \cdot |E|$, this implies that $2 \cdot |E_\alpha|=|(EF)_\alpha|$.

    It follows from this and the first fact that $(EF)_\alpha \cong \bar{E}_{\bar \alpha}$ contains $E_\alpha$ as a subgroup of index $2$. Since $\bar E \cong E$ is simple, $\bar E$ must act with fixity $2$, $3$, or $4$ by Remark \ref{CompInd}~(b). We now go through Table \ref{TableAllFix} and look for fixity $2$ actions of $E$ with the following conditions: There exists a fixity $2$, $3$ or $4$ action for $\bar E$ such that $\bar{E}_{\bar \alpha}$ contains an index $2$ subgroup isomorphic to $E_\alpha$. We find that this is only possible if $E$ is isomorphic to $\Alt_5$, $\PSL_2(7)$, $\PSL_2(8)$ or $\PSL_2(9)$ and $(EF)_\alpha \cong \bar E_{\bar \alpha}$ is isomorphic to $\Sym_3$, $E_4$, $\Sym_4$, $D_{10}$, $D_{14}$ or $D_{18}$.

	In all these cases, there exists an involution $t \in (EF)_\alpha \setminus E_\alpha$. Let $t_E \in E$ and $t_F \in F$ be such that $t = t_E t_F$. Then $t_E$ and $t_F$ are involutions and $C_{EF}(t) = C_E(t_E) \times C_F(t_F)$. All possibilities for $E$ have just a single conjugacy class of involutions and $C_E(t_E) \in \syl_2(E)$. In particular, $C_{EF}(t)$ has order $8$ if $E \cong \Alt_5$ and order $16$ in the other three cases. Lemma \ref{normaliser}\,(ii) states that $|C_{EF}(t):C_{(EF)_\alpha}(t)| \leq 4$. The only possible combinations of $EF$ and $(EF)_\alpha$ with these properties are the ones listed in Case (a) and (c) of the statement. In particular $E \cong \Alt_5$ or $E \cong \PSL_2(7)$, which we will refer to later.
  
	Assume for a contradiction that $EF$ does not act with fixity $4$ on $\Omega$. Since $Z(EF) = F$ has order $2$, Lemma \ref{normaliser} implies that this is only possible if $EF$ acts with fixity $2$. According to Lemma \ref{centrereduction} and Remark \ref{CompInd}~(b), $\bar E = EF/F$ must therefore also act with fixity $2$ on $\bar \Omega$. But if $\bar E \cong \Alt_5$ and $\bar{E}_{\bar \alpha} \cong E_4$ or if $\bar E \cong \PSL_2(7)$ and $\bar E_{\bar \alpha} \cong \Sym_4$, then $\bar E$ acts with fixity $3$ by Table \ref{TableAllFix}. Otherwise $\bar E \cong \Alt_5$ and $\bar{E}_{\bar \alpha} \cong (EF)_\alpha$ is isomorphic to $\Sym_3$ or to $D_{10}$ and therefore $|C_{EF}(t) : C_{(EF)_\alpha}(t)| = 4$. This contradicts Lemma \ref{normaliser}\,(ii). Thus $EF$ must act with fixity $4$. Since there is only one class of involutions in $E$, and hence in $EF \setminus (F \cup E)$, and since some element of prime order in $EF \setminus E$ must have four fixed points, we conclude that every involution in $EF \setminus (F \cup E)$ has four fixed points.

    This proves all general statements of the lemma, and if $G=F^*(G)$, then we have already seen that (a) or (c) holds.
	It remains to show that Case (b) holds if $G \neq F^*(G)$. 
   
    Since $\aut(EF) = \aut(E)$ and $Z(EF) = Z(G) = F$, it follows that $\bar G$ is almost simple, but not simple. We stated the possibilities for $E(G)$ above, namely 
   $\Alt_5$ or $\PSL_2(7)$, and hence we know that $\bar G$ is isomorphic to $\Sym_5$ or to $\PGL_2(7)$. We recall that $E$ acts transitively on $\Omega$ by hypothesis. Then a Frattini argument gives that $G= EG_\alpha$. Therefore, $G/E = EG_\alpha/E \cong G_\alpha/E_\alpha$, and then
   we apply
    Lemma \ref{centrereduction}: With the notation there, $k=4$ and $|Z|=2$ (where $Z=Z(G)=F$) and $|\Omega|>8$. Hence the hypotheses are satisfied and Lemma \ref{centrereduction}~(i) gives that $G_\alpha \cong \bar{G}_{\bar \alpha}$ is isomorphic to a subgroup of $\bar G$. 
    
    If $E \cong \PSL_2(7)$, then $E_\alpha \cong \Alt_4$ has order $12$ and thus $G_\alpha \cong \bar{G}_{\alpha}$ has order $48$. But $\bar G \cong \PGL_2(7)$ does not contain a subgroup of order $48$. Thus $E \cong \Alt_5$, $E_\alpha$ is cyclic of order $2$, $3$ or $5$ and $\bar{E}_{\bar \alpha}$ is isomorphic to one of the groups listed in Case (a). Since $\bar{G} \cong \Sym_5$ and $|\bar{G}_{\bar \alpha} : \bar{E}_{\bar \alpha}| = 2$, it follows that $G_\alpha \cong \bar{G}_{\bar \alpha}$ is isomorphic to the normaliser of a Sylow $2$-, $3$-, or $5$-subgroup of $\Sym_5$. In the first two cases, $G / E \cong G_\alpha / E_\alpha$ is elementary abelian. Since $|F| = 2$, we then obtain that $G \cong \Sym_5 \times C_2$. Let $G_0 \trianglelefteq G$ be such that $G_0 \cong \Sym_5$ and $G = G_0 \times F$ and let $\pi : G \to G_0$ denote the projection on $G_0$. Since $F_\alpha = 1$, we have that $G_\alpha^\pi \cong G_\alpha$. Let $s \in G_\alpha$ be such that $s^\pi$ corresponds to a transposition in $G_0 \cong \Sym_5$. If $G_\alpha \cong \Sym_3 \times C_2$, then we may choose $s$ such that $s$ inverts both elements of order $3$ under conjugation. In any case, $C_G(s)$ contains an element of order $3$ which is not contained in $G_\alpha$. It follows that $|C_G(s) : C_{G_\alpha}(s)|$ is divisible by $3$. Moreover, $F \leq C_G(s)$ has order $2$ and acts semi-regularly on $\Omega$. Thus $|C_G(s) : C_{G_\alpha}(s)|$ is also divisible by $2$, hence by $6$, and this contradicts Lemma \ref{normaliser}\,(ii).
	The only remaining case is $G_\alpha \cong C_5 : C_4$. Here $G/E \cong G_\alpha / E_\alpha \cong C_4$, as described in Case (b).
\end{proof}

\begin{ex}\label{EtransFix2eg}
	We show that all cases of Lemma \ref{EtransFix2} actually occur. To construct examples for Case (a) where point stabilisers are isomorphic to $\Sym_3$ or to $D_{10}$ or for Case (b), we can simply apply Lemma \ref{centrereduction} to the groups described in Example \ref{ExFinalTheorem2eiv}.

    The action of the full automorphism group of an icosahedron on the set of edges is an example for Case (a) where the point stabilisers are isomorphic to $E_4$.

    For Case (c), we turn to the Transitive Groups Library~\cite{TransGrp} of \texttt{GAP} and look at the group \texttt{G := TransitiveGroup(14, 17)}. Using \texttt{GAP}, we see that $G \cong \PSL_2(7) \times C_2$ and that the point stabiliser $G_1$ is isomorphic to $\Sym_4$. Moreover, \texttt{GetFixity(G)} evaluates to $4$ while \texttt{GetFixity(DerivedSubgroup(G))} returns $2$. Hence $G$ is an example for Case (c).
\end{ex}

\begin{lemma}\label{Efix2}
Suppose that Hypothesis \ref{hyp} holds and that $E(G) \neq 1$ acts with fixity $2$ on $\alpha^{E(G)}$. Then $|F(G)| \leq 2$. More precisely, one of the following holds:
\begin{enumerate}[(1)]
    \item $F(G) = 1$ and $G$ is almost simple.
    \item $|F(G)| = 2$ and $G = G_0 \times Z(G)$ for an almost simple group $G_0$. Moreover, $G_0$ has two orbits on which it acts with fixity $2$, and $Z(G)$ interchanges these two orbits.
	\item $|F(G)| = 2$, $E(G)$ acts transitively on $\Omega$, and one of the cases in Lemma \ref{EtransFix2} holds.
\end{enumerate}
\end{lemma}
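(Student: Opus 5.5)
Set $E := E(G)$ and $F := F(G)$. Lemma \ref{quasisimpleFix23} and Proposition \ref{onecomp} show that $E$ is simple, so it is the unique component. By Lemma \ref{Efix2orbits}, $E$ has one or two orbits on $\Omega$, and I split the proof along this.

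If $E$ is transitive, there are two subcases. When $F \neq 1$, Lemma \ref{EtransFix2} applies directly and gives Case (3). When $F = 1$, we have $F^*(G) = E$, so $C_G(E) = 1$ and $G$ embeds in $\aut(E)$; this is Case (1).

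Now suppose $E$ has exactly two orbits. The first step is to show that $F$ acts semi-regularly. If $1 \neq F_\alpha$, then $E$ centralises $F$, so $E \le N_G(F_\alpha)$. Lemma \ref{normaliser}~(i) then bounds $|E:E_\alpha| \le 4$, and Remark \ref{CompInd}~(a) rules this out. Next, take $x \in E_\alpha$ of prime order with exactly two fixed points on $\alpha^E$. As in the proof of Lemma \ref{Efix2orbits}, $x$ has at least two, hence exactly two, fixed points on each $E$-orbit, so $|\FO(x)| = 4$. The group $F \le C_G(x)$ permutes $\FO(x)$ semi-regularly, so $|F|$ divides $4$.

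The main task is to pin down $|F| = 2$ and $G = G_0 \times Z(G)$. Consider $C := C_G(E)$. Since $E$ is the only component, $F^*(G) = E \times F$ and $C \cap E = 1$; because $F^*(C) \le F^*(G)$, this gives $F^*(C) = F$. I claim $C$ acts semi-regularly. If $1 \neq c \in C_\alpha$, then $E \le C_G(c)$, and Lemma \ref{normaliser}~(ii) with Remark \ref{CompInd}~(a) gives the same contradiction as before. So $C$ is semi-regular and centralises $x$, hence acts semi-regularly on the four points of $\FO(x)$, forcing $|C| \le 4$. Next I use the orbit structure. If $F = 1$, then $F^*(G) = E$, so $C_G(E) = 1$ and $G$ is almost simple; this is Case (1). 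If $F \neq 1$, each element of $F$ commutes with $x$, and the points of $\FO(x)$ come in pairs lying in the two $E$-orbits. I will show that $F$ fixes each $E$-orbit setwise only trivially: if $f \in F$ fixes $\alpha^E$, then $f$ permutes the two fixed points of $x$ on $\alpha^E$, and since $E$ has fixity $2$ there I expect an argument along the lines of the first paragraph of the proof of Lemma \ref{EtransFix2} to force $f$ to act trivially or to give $E \times \langle f\rangle$ transitive on $\alpha^E$ with $\langle f\rangle$ central. That last situation needs care, and it is where I expect the main difficulty. My fallback is to count: $|\FO(x)| = 4$ is already used up by the two orbits, so any non-trivial $f$ preserving $\alpha^E$ would have to swap the two fixed points of $x$ there and also those in $\beta^E$. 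Then $f$ together with $x$ gives more than four fixed points for some element of $\langle x \rangle \times \langle f \rangle$ or contradicts the fixity-$2$ structure of $(EF, \alpha^E)$ via Lemma \ref{EtransFix2} applied to the stabiliser of the orbit. Granting this, $F$ acts regularly on the set of two $E$-orbits, so $|F| = 2$, and likewise $|C| = 2$, so $C = F = Z(G)$.

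Finally, let $G_0$ be the kernel of the action of $G$ on the two $E$-orbits. It has index $2$ and does not contain $F$, since $F$ swaps the orbits, so $G = G_0 \times F$ with $F$ central. Because $C_{G_0}(E) = C \cap G_0 = 1$, the group $G_0$ is almost simple. It acts on both orbits with fixity at most $4$, and the action on $\alpha^E$ is faithful because $G_0$ embeds in $\aut(E)$. For the fixity-$2$ claim: any $g \in G_0^\#$ with three or four fixed points on $\alpha^E$ has its conjugate by the swapping involution in $F$, namely $g$ itself, fixing equally many points on $\beta^E$. That gives at least six fixed points in total, which is impossible. So $G_0$ has fixity at most $2$ on each orbit, and exactly $2$ because $E \le G_0$ attains it. This gives Case (2).
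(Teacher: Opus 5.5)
Most of your argument is sound. The transitive case, the semi-regularity of $F$ and of $C_G(E)$, the bound $|F|\mid 4$ via $\FO(x)$, and the final step are all fine. In that final step you split off $G_0$ and check, by conjugating with the central involution, that $G_0$ has fixity exactly $2$ on each orbit; the paper leaves this check implicit.

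The gap is the step you flag and then simply grant: that no non-trivial element of $F$ stabilises the two $E$-orbits. Equivalently, $F\cap K=1$, where $K$ is the kernel of the action of $G$ on $\{\alpha^E,\beta^E\}$. This is the entire content of the two-orbit case, and neither of your fallbacks establishes it.

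The count in $\langle x\rangle\times\langle f\rangle$ cannot work. Suppose $p=o(x)$ is odd and $p\nmid i$. Then $\FO(x^if)\subseteq\FO(x^{i\cdot o(f)})=\FO(x)$, and on $\FO(x)$ the element $x^if$ acts like the fixed-point-free $f$. So every element of $\langle x\rangle\times\langle f\rangle$ outside $\langle x\rangle$ is fixed-point-free. If $p=2$, nothing you have established controls the fixed points of $xf$ off $\FO(x)$.

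The first paragraph of the proof of Lemma~\ref{EtransFix2} yields no contradiction either. A transitive action of $E\times\langle f\rangle$ on $\alpha^E$ with $f$ central genuinely occurs, for example in Case~(a) of that lemma.

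What you need is the \emph{conclusion} of Lemma~\ref{EtransFix2}, applied to $K$ on each orbit separately. Suppose $1\neq z\in F\cap K$.
\begin{enumerate}
\item[(i)] $K$ acts faithfully on $\alpha^E$ (as $|\alpha^E|\geq 5$), transitively, and with fixity at most $4$. Here $E(K)=E$ is transitive of fixity $2$, and $F(K)=F\cap K\neq 1$. The same holds on $\beta^E$.
\item[(ii)] The lemma then gives $|F(K)|=2$. It also says that every involution in $F^*(K)\setminus(F(K)\cup E)$ has four fixed points on $\alpha^E$ and four on $\beta^E$. An example is $zt$ with $t$ an involution of $E$.
\item[(iii)] Such an involution has eight fixed points on $\Omega$, contradicting fixity $4$.
\end{enumerate}
Thus $F\cap K=1$, so $|F|=2$ and $F$ swaps the two orbits. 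Moreover, $C=C_G(E)$ is a normal subgroup of order dividing $4$, hence nilpotent, so $C\leq F\leq C$. With this step supplied, the rest of your argument goes through as written.
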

\begin{proof}
	Let $F := F(G)$ and $E := E(G)$. Note that $E$ acts faithfully on $\alpha^E$ by Remark \ref{CompInd}~(a), and $F$ acts semi-regularly by Lemma \ref{FfixedE1}. Applying Lemma \ref{quasisimpleFix23} and Proposition \ref{onecomp} to the action of $E$ on $\alpha^E$ shows that $E$ must be simple. It follows that $F^*(G) = F \times E$. If $F = 1$, then $F^*(G) = E$ is simple and $G$ is almost simple. This is Case (1). We may therefore suppose that $F \neq 1$.

	If $E$ acts transitively on $\Omega$, then Lemma \ref{EtransFix2} yields that $|F| = 2$ and that one of the cases listed in the Lemma holds. This is (3). 

Thus we may suppose that $E$ does not act transitively. Lemma \ref{Efix2orbits} states that $E$ has exactly two orbits. Let $\beta \in \Omega \setminus \alpha^{E}$ and let $K$ be the kernel of the action of $G$ the set $\{ \alpha^E, \beta^E \}$. Then $|G:K| = 2$ and $F^*(K) = F^*(G) \cap K = (F \cap K) \times E$. Assume for a contradiction that $F \cap K \neq 1$. Then Lemma \ref{EtransFix2}, applied to the action $K$ on $\alpha^E$ and $\beta^E$, gives that every involution in $(K \cap FE) \setminus (F \cup E)$ has four fixed points on $\alpha^E$ and $\beta^E$. This makes eight fixed points on $\Omega$, which is impossible by our fixity $4$ hypothesis. Hence $F \cap K = 1$ and $K = E$. 

This implies that  $|F| = 2$ and that 
$F$ interchanges the two orbits of $E$ on $\Omega$. Therefore, the actions of $E$ on $\alpha^E$ and on $\beta^E$ are isomorphic, see Lemma~\ref{normalaction},
and we obtain Case (2).
\end{proof}

\subsection{Components that act with fixity 3 or 4 on their orbits}

\begin{lemma}\label{Efix3}
	Suppose that Hypothesis \ref{hyp} holds and that $E := E(G) \neq 1$ acts with fixity $3$ on $\alpha^{E}$. Then $E \cong \Alt_6$ and one of the following is true:
\begin{enumerate}[(1)]
    \item If $|\Omega|=6$, then $G$ is isomorphic to $\Sym_6$.
    \item If $|\Omega|=12$, then $G$ is isomorphic to $\M_{10}$, $\PGL_2(9)$, or $\aut(\Alt_6)$.
\end{enumerate}
\end{lemma}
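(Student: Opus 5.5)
First I reduce to a simple component with few orbits. By Remark~\ref{CompInd}~(a) the group $E$ acts faithfully on $\alpha^E$. Lemma~\ref{quasisimpleFix23} and Proposition~\ref{onecomp} then show that $E$ is simple. Next I bound the number of $E$-orbits, in the spirit of Lemma~\ref{Efix2orbits}. Choose $x \in E_\alpha$ of prime order $p$ with exactly three fixed points on $\alpha^E$. Then $|\alpha^E| \equiv 3 \pmod p$, and all $E$-orbits have the same length by Lemma~\ref{normalaction}.
\begin{itemize}
\item If $p \ge 5$, every orbit contributes at least $3$ fixed points.
\item If $p = 3$, every orbit contributes $0$, $3$ or $6$ fixed points.
\item If $p = 2$, every orbit contributes an odd number of fixed points.
\end{itemize}
In each case, combining this with fixity $4$ leaves few options. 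Either $E$ is transitive, or $p=3$, $x$ is fixed-point-free on the other orbits, and there is more than one orbit. In the latter case I will show that some other element attaining fixity $3$ (an involution, or an element of order $\ge 5$) must exist. That would force too many fixed points, so $E$ should be transitive. If this fails, I would instead handle two orbits directly.

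Second, I use Table~\ref{TableAllFix}. The fixity $3$ actions of simple groups there are: $\Alt_5$ on $E_4$-cosets, $\Alt_6$ with stabiliser $\Sym_4$ or $\Alt_5$, $\Alt_7$, $\Alt_8$, $\PSL_2(7)$ with $\Sym_4$, $\PSL_2(11)$ with $\Alt_5$, $\PSL_3(q)$, $\PSU_3(q)$, $\PSL_4(3)$, $\PSL_4(5)$, $\PSU_4(3)$, $M_{11}$ and $M_{22}$. Since $E \ne G$ (otherwise the fixity would be $3$), some $g \in G\setminus E$, which I may take of prime order, has $4$ fixed points. Alternatively, $|G:E|>1$ and the fixity-$4$ element lies in $E$ with respect to the $G$-action. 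I also note that $F(G)$ acts semi-regularly by Lemma~\ref{FfixedE1}. Any $f \in F(G)^\#$ centralises $E$, so it permutes the $3$ fixed points of $x$ in $\alpha^E$; this forces $f$ to move $\alpha^E$ to another orbit. If $E$ is transitive, this means $F(G)=1$ and $G$ is almost simple, $E \le G \le \aut(E)$.

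Third, I compute fixed points in the almost simple cases via Lemma~\ref{numberfpt} and Lemma~\ref{StabInNsg}. If $G_\alpha \le E$, then Lemma~\ref{StabInNsg} gives fixity $\max_y \sum_i |\fix(y^{g_i})|$. Outer automorphisms fusing classes then give sums like $3+3$ for $3$-cycle-type elements in most cases. The exceptional case is $\Alt_6$, where the exceptional outer automorphism swaps $3$-cycles with double $3$-cycles, as in Example~\ref{Alt6Fix3}. This yields $|\Omega|=12$ and $G \in \{M_{10},\PGL_2(9),\aut(\Alt_6)\}$. (The group $\Sym_6$ on $12$ points fixes $3$-cycles $6$ times and is excluded.) If $G_\alpha \not\le E$, then $E$ is transitive on $\Omega$ and I count fixed points of outer elements of $G_\alpha$ with Lemma~\ref{numberfpt}. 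I expect this to leave only $\Sym_6$ in its natural action (transpositions fix $4$ points), giving $|\Omega|=6$. The $\Sym_6$ acting on $6$ points through the other $\Alt_5$-class is isomorphic as a permutation group.

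The main obstacle is this case-by-case elimination over the table: the $\PSL_3(q)$, $\PSU_3(q)$, $\Alt_7$, $\Alt_8$, $M_{11}$, $M_{22}$, $\PSL_2(7)$ and $\PSL_2(11)$ families, together with their automorphism extensions. For the cyclic-stabiliser families I would argue that field or graph automorphisms normalising the stabiliser fuse the fixity-$3$ elements, or that $\Sym_n$-type extensions give $3$-cycles $6$ fixed points. For the small groups I would fall back on a \texttt{GAP} check, as done elsewhere in the paper.
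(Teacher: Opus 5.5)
Your proposal has genuine gaps in both the intransitive and the transitive case.

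\emph{Intransitive $E$.} Your plan is to force transitivity by finding a further element (an involution, or an element of order at least $5$) with three fixed points on an orbit. This cannot work, because conclusion (2) is itself intransitive. For $\M_{10}$, $\PGL_2(9)$ and $\aut(\Alt_6)$ on $12$ points, $E\cong\Alt_6$ has two orbits of length $6$. There only elements of order $3$ reach three fixed points on an orbit (with $3+0$), while involutions fix $2+2$ and elements of order $5$ fix $1+1$. Your fallback ``handle two orbits directly'' needs a bound on the number of orbits, which you never prove.

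The missing argument runs as follows. First exclude $p=2$. Your bullet list suggests fixity alone does this, but $3+1$ fixed points on two orbits of equal length is compatible with fixity $4$. It is excluded because $x$ is then an odd permutation on one orbit, contradicting simplicity of $E$. So $p=3$, hence $3$ divides $|\alpha^E|$ and $|E_\alpha|$. Table~\ref{TableAllFix} then leaves only $\Alt_6$ of degree $6$ or $15$ and $\Alt_7$ of degree $15$. As $E$ is simple, every involution is even on every orbit. So it fixes at least $3$ points on an orbit of length $15$ and exactly $2$ on an orbit of length $6$. This excludes degree $15$ and gives exactly two orbits, whose point stabilisers lie in the two different classes of $\Alt_5$.

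Two further steps of yours are wrong here. Your split ``if $G_\alpha\not\le E$ then $E$ is transitive'' fails for $\aut(\Alt_6)$ on $12$ points, where $G_\alpha\cong\Sym_5\not\le E$. So that group does not come out of your Lemma~\ref{StabInNsg} branch; you must apply that lemma to the orbit stabiliser $EG_\alpha\cong\Sym_6$. Also, if $f\in F(G)^\#$ permutes $\FO(x)\subseteq\alpha^E$, then $f$ \emph{stabilises} $\alpha^E$ rather than moving it. So you have not shown $F(G)=1$ or that $G$ is almost simple. The paper gets $F(G)=1$ from the faithful action of $F^*(G)$ on a $6$-point orbit.

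\emph{Transitive $E$.} Here the essential elimination is only announced (``I expect'', ``field or graph automorphisms fuse''). There is no argument for $\PSL_3(q)$, $\PSU_3(q)$ and the other cyclic-stabiliser entries, and a \texttt{GAP} check cannot cover infinite families. The paper instead uses a uniform idea. Suppose $|E_\alpha|$ is coprime to $6$, and let $x\in G_\alpha$ have prime order and $|\FO(x)|=4$. Then $x\notin E$, and $C_E(x)$ acts faithfully on the $4$-set $\FO(x)$. A non-trivial element of $C_{E_\omega}(x)$ with $\omega\in\FO(x)$ would have order coprime to $|\Sym_4|$, hence fix $\FO(x)$ pointwise, contradicting fixity $3$ of $E$. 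So $C_E(x)$ is semi-regular on $\FO(x)$, hence a $2$-group. Then \cite{Fu1984} forces $o(x)=2$, and Lemma~\ref{ceninvsimple} forces $E\cong\Alt_5$, whose fixity-$3$ stabiliser $E_4$ has even order.

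The remaining degrees are $6,7,11,15$. For $7$, $11$ and $15$ the number $|\Omega|-4$ is an odd prime, so $x$ is a single cycle of that odd prime order. Since $\mathrm{Out}(E)$ is a $2$-group, $x$ induces an inner automorphism. Then $E\langle x\rangle$ has a central subgroup of order $o(x)$, contradicting Lemma~\ref{normaliser}\,(iv). For degree $6$, $G\le\Sym_6$ properly contains $\Alt_6$, so $G\cong\Sym_6$, and no almost-simplicity argument is needed.
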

\begin{proof}
	By Lemma~\ref{quasisimpleFix23}, the quasi-simple group $E$ is in fact simple. Lemma~\ref{normalaction} tells us that all $E$-orbits in $\Omega$ have
	the same size.
	
	Let $a\in E_{\alpha}$ be of prime order $p$ and such that it fixes exactly three points in~$\alpha^E$. Then
	$|\alpha^E|\equiv 3 \mod p$.
	We will make a case distinction depending on whether $\alpha^E=\Omega$ or not.
	
	First suppose that $\alpha^E\neq \Omega$.
	Then $a$ can fix up to four points in $\Omega$ because $G$ acts with fixity $4$.
	However, we will argue now that $a$ fixes exactly three points in~$\Omega$.
	For a contradiction, assume that $a$ fixes $\delta\in \Omega\setminus \alpha^E$.
	Then $\fix_{\Omega \setminus \alpha^E}(a)=\{\delta\}$, and hence
	$1\equiv |\delta^E|=|\alpha^E|\equiv 3\mod p$.
	Therefore, $p=2$ and $a$ acts as a product of transpositions on both $E$-orbits. Since $a$ has three fixed points on $\alpha^E$ and one fixed point on $\delta^E$, it must therefore act as an odd permutation on one of the orbits. It follows that $E$ has normal subgroup of index $2$ in contradiction to $E$ being non-abelian simple.\\
	As a consequence, $a$ fixes exactly three points in $\Omega$. We
	still suppose that $\alpha^E\neq \Omega$. Thus, there exists a point $\delta\in \Omega\setminus \alpha^E$ and we see that
	$0\equiv |\delta^E|=|\alpha^E|\equiv 3\mod p$. Therefore, $p=3$ and $|E_{\alpha}|$ and $|\alpha^E|$ are both divisible by $3$. By Table \ref{TableAllFix}, the group $E$ is isomorphic to $\Alt_6$ and $|\alpha^E|\in\{6,15\}$ or to $\Alt_7$ and
	$|\alpha^E|=15$.
	If $|\alpha^E| = |\delta^E| = 15$, then every involution $t \in E$ must fix three points in both $E$-orbits, because otherwise it induces an odd permutation. In total, $t$ has at least six fixed points in contradiction to our main hypothesis.\\
	As a consequence, $|\alpha^E|=6$ and $E\cong \Alt_6$ with point stabilisers isomorphic to $\Alt_5$.
	Again, every involution $t \in E$ induces an even permutation and must therefore fix two points in every $E$-orbit. Hence the fixity $4$ hypothesis yields that there are two $E$-orbits and that $|\Omega| = 2 \cdot |\alpha^E| = 12$. If $E_\alpha$ is conjugate to $E_\delta$ in $E$, then the action of $E$ on $\alpha^E$ is equivalent to the action of $E$ on $\delta^E$, and every element in $E$ with three fixed points on $\alpha^E$ must also have three fixed points on $\delta^E$. This is a contradiction to the fixity $4$ hypothesis. Hence $E_\alpha$ and $E_\delta$ are representatives for the two distinct conjugacy classes of $E \cong \Alt_6$ which are isomorphic to $\Alt_5$.\\

    Let $x \in G$ such that $x$ interchanges the two $E$-orbits. Then conjugation by $x$ interchanges the two conjugacy classes which are isomorphic to $\Alt_5$, and it follows that $x$ induces a non-trivial automorphism on $\Alt_6$. Since $[F(G), E] = 1$, it follows that $F(G)$ stabilises both $E$-orbits. Since $|\alpha^E| = |\alpha^{F^*(G)}| = 6 > 4$, $F^*(G)$ acts faithfully on $\alpha^E$ and is therefore isomorphic to a subgroup of $\Sym_6$. This is only possible if $F(G) = 1$ and $F^*(G) = E \cong \Alt_6$. Now $G$ is isomorphic to a subgroup of $\aut(\Alt_6)$ which contains an automorphisms that swaps the two conjugacy classes of subgroups isomorphic to $\Alt_5$. These possibilities are listed in Case (2).

	Now, for the second possibility, suppose that $\alpha^E=\Omega$, which means that $E$ acts transitively on $\Omega$.\\
	For a contradiction, assume that $E_\alpha$ has order coprime to $6$. Let $x \in G_\alpha$ be of prime order such that $|\FO(x)| = 4$. Then $x \notin E$ and every $1 \neq y \in C_E(x)$ stabilises $\FO(x)$ set-wise. Since $E$ acts with fixity $3$, the element $y$ induces a non-trivial permutation on $\FO(x)$ and must therefore have order divisible by $2$ or $3$. The assumption $(|E_\alpha|, 6) = 1$ now yields $C_{E_\omega}(x) = 1$ for all $\omega \in \Omega$. Hence $C_E(x)$ acts semi-regularly on the $4$-set $\FO(x)$ and must therefore be a $2$-group. Now the theorem in \cite{Fu1984} implies that $x$ is an involution and Lemma \ref{ceninvsimple} gives $E \cong \Alt_5$. But the only fixity $3$ action of $\Alt_5$ has point stabilisers of even order. Hence $E_\alpha$ cannot have order coprime to $6$.\\
	According to Table \ref{TableAllFix}, $E$ must be isomorphic to $\Alt_5$, $\Alt_6$, $\Alt_7$, $\PSL_2(7)$ or $\M_{11}$ and $|\Omega| \in \{ 6,7,11,15 \}$. Assume for a contradiction that $|\Omega| \in \{7,11,15\}$ and let $x \in G_\alpha$ be of prime order such that $|\FO(x)| = 4$. Then $x$ is of order $3 = 7-4$, $7 = 11-4$ or $11=15-4$, respectively. Since the outer automorphism group of all possibilities for $E$ is a $2$-group, we conclude that $x$ induces an inner automorphism. Now $E\<x\> \neq E$ acts with fixity $4$ on $\Omega = \alpha^E$ and $|Z(E \<x\>)| = o(x)$ is coprime to $4$ in contradiction to Lemma \ref{normaliser}. Hence $|\Omega| = 6$, in which case $E \cong \Alt_6$ and $E_\alpha \cong \Alt_5$. Since the action of $G$ is faithful, we can identify $G$ with a subgroup of $\Sym_{\Omega} \cong \Sym_6$ which contains a proper subgroup isomorphic to $\Alt_6$. This is only possible if $G \cong \Sym_6$ as in Case (1).
\end{proof}

\begin{lemma}\label{compfix4FGsimple}
	Suppose that Hypothesis \ref{hyp} holds, that $E(G)$ is simple and that it acts with fixity $4$ on $\alpha^{E(G)}$.
	If $F(G) \neq 1$, then $|F(G)| = 2$ and there exists an odd prime power $q \geq 7$ such that $E(G) \cong \PSL_2(q)$. More precisely, in the latter case one of the following holds:
	\begin{enumerate}[(1)]
		\item $q \equiv 1$ modulo $4$ and $E(G) \cap G_\alpha$ is of isomorphism type $C_{\frac{q-1}{4}}$ or $E_q : C_{\frac{q-1}{4}}$.
		\item $q \equiv -1$ modulo $4$ and $E(G) \cap G_\alpha$ is of isomorphism type $C_{\frac{q+1}{4}}$.
	\end{enumerate}
\end{lemma}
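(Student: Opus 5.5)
Let $E := E(G)$ and $F := F(G) \neq 1$. The plan is to mirror the proof of Lemma~\ref{EtransFix2}: first show that $F$ is small and central, then pass to $\bar G = G/F$ and compare fixities in Table~\ref{TableAllFix}.

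\emph{Step 1: $F$ is semi-regular and $|F|=2$.} If $F_\alpha \neq 1$, then Lemma~\ref{FfixedE1} gives $E = 1$, which is a contradiction; hence $F$ acts semi-regularly. Since $E$ is simple, $F^*(G) = E \times F$. Choose $x \in E_\alpha$ of prime order with exactly four fixed points on $\alpha^E$; by the global fixity hypothesis these are all of its fixed points on $\Omega$. Then $F \le C_G(x)$ permutes $\FO(x)$ semi-regularly, so $|F|$ divides $4$.

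Next I must rule out $|F| = 4$. Take $Z := Z(F) \cap Z(F^*(G))$. If $|F| = 4$, then $F$ acts regularly on $\FO(x)$ for every such $x$, so $\FO(x)$ is a single $F$-orbit. I would then pass to $\bar G = G/Z$ using Lemma~\ref{centrereduction}, which applies because $G$ is not soluble. There $\bar x$ has exactly one fixed point (or $4/|Z|$ fixed points), so by Lemma~\ref{centrereduction}~(iii) the image $\bar E$ acts on $\bar\Omega$ with small fixity. On the other hand $\bar E \cong E$ acts on $\alpha^E$ with fixity $4$, and the stabiliser $\bar E_{\bar\alpha}$ contains $E_\alpha$ with index $|F|$. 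Comparing the fixity-$4$ stabilisers with the fixity-$1$ and fixity-$2$ stabilisers in Table~\ref{TableAllFix} should leave no case with index $4$. This gives $|F| = 2$, and then $F = Z(G)$.

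\emph{Step 2: identify $E$.} Now $F = \langle z \rangle$ has order $2$ and swaps the four fixed points of each such $x$ in two pairs. In $\bar G = G/F$ (again via Lemma~\ref{centrereduction}), $\bar E \cong E$ acts on $\bar\Omega$ with $\bar E_{\bar\alpha} \cong (EF)_\alpha$. This stabiliser contains $E_\alpha$ with index $1$ or $2$, and $\bar x$ has exactly $2$ fixed points. Moreover, Lemma~\ref{centrereduction}~(iii) forces every element of $\bar E_{\bar\alpha}$ with more than $2$ fixed points to have order dividing $4$.

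\begin{itemize}
\item If the index is $1$, then $E$ has two orbits interchanged by $z$, and the same $x$ acts on both. Counting fixed points gives $8 > 4$ unless $\FO(x)$ splits $2+2$. I expect this to contradict $x$ having four fixed points on $\alpha^E$ alone, so the index should be $2$, making $E$ transitive on $\Omega$ and $E F_{\alpha'} \ldots$ as in Lemma~\ref{EtransFix2}.
\item Then $\bar E$ acts with fixity $2$ (or at most $4$), with stabiliser $\bar E_{\bar\alpha}$ containing the fixity-$4$ stabiliser $E_\alpha$ as an index-$2$ subgroup.
\end{itemize}

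Scanning Table~\ref{TableAllFix} for pairs of a fixity-$4$ stabiliser $H$ and a fixity-$2$ stabiliser $K$ of the same simple group with $|K:H| = 2$ leaves $\PSL_2(q)$ with $q$ odd. For $q \equiv 1 \pmod 4$, the candidates are $H = C_{(q-1)/4} \le C_{(q-1)/2}$ and $H = E_q : C_{(q-1)/4} \le E_q : C_{(q-1)/2}$. For $q \equiv -1 \pmod 4$, the candidate is $H = C_{(q+1)/4} \le C_{(q+1)/2}$. Small exceptional rows must be checked separately:
\begin{itemize}
\item $\Alt_6$, where $C_2 \le C_4$ or $E_9 : C_2 \le E_9 : C_4$;
\item $\PSL_2(7)$, where $C_2 \le C_4$ or $\Sym_3$;
\item $\PSL_2(11)$, $\PSL_2(13)$ and $\Sz(q)$.
\end{itemize}
The $\PSL_2(q)$ cases with $q \ge 7$ odd survive, including $\PSL_2(9)$ and $\PSL_2(7)$ with the listed stabilisers.

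\emph{Main obstacle.} The hardest part is excluding the sporadic coincidences in the table scan, such as $\Alt_6$ with $E_\alpha \cong \Sym_3$ or $D_{10}$, $\PSL_2(8)$, and $\Sz(q)$. For these I would use the involution argument of Lemma~\ref{EtransFix2}: take $t = t_E z \in (EF)_\alpha \setminus E_\alpha$ and bound $|C_{EF}(t) : C_{(EF)_\alpha}(t)| \le 4$ by Lemma~\ref{normaliser}~(ii). Here $C_{EF}(t) = C_E(t_E) \times F$. This index condition should kill every candidate except the $\PSL_2(q)$ families stated in the lemma, where $(EF)_\alpha$ is cyclic or $E_q$ by cyclic.
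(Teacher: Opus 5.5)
\textbf{There is a genuine gap in your exclusion of $|F|=4$.} Your identification of $E$ follows the paper: $F$ is semi-regular and centralises an $x$ with four fixed points, you pass to the quotient by a central subgroup of order $2$, you scan Table~\ref{TableAllFix} for index-$2$ overgroups of fixity-$4$ stabilisers, and you remove extra candidates with the involution-centraliser bound of Lemma~\ref{normaliser}~(ii). Step~1, however, does not work as written.

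\emph{The fixity bound is not justified.} Lemma~\ref{centrereduction}~(iii) with $|Z|=4$ only says that a subgroup of $\bar G_{\bar\alpha}$ with at least two fixed points has order dividing $4$. It does not bound the fixity of $\bar E$ on $\bar\Omega$ by $2$. Also, $\bar x$ having one fixed point tells you nothing about other elements.

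\emph{The index-$4$ scan is not empty.} In $\Alt_6$ the fixity-$4$ stabiliser $E_9$ has index $4$ in the fixity-$2$ stabiliser $E_9:C_4$. Allowing fixity-$3$ and fixity-$4$ overgroups adds $\Sym_3<\Sym_4$ in $\Alt_6$ and in $\PSL_2(7)$, and $C_3<\Alt_4$ in $\PSL_2(11)$ and $\PSL_2(13)$. These last cases can be discarded, since $(EF)_\alpha/E_\alpha\cong EF/E\cong F$ forces $E_\alpha$ to be normal with abelian quotient. But $E_9\trianglelefteq E_9:C_4$ with $F\cong C_4$ survives. It needs its own computation: the involutions of $(EF)_\alpha$ lie outside $E$ and have centraliser of order $32$ in $EF$ but only $4$ in $(EF)_\alpha$, which contradicts Lemma~\ref{normaliser}~(ii).

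\textbf{How the paper avoids this.} It reverses the order of the argument.
\begin{enumerate}
\item[(a)] $F$ centralises $x$, so it stabilises $\FO(x)\subseteq\alpha^E$ and hence the orbit $\alpha^E$. Thus $E$ is transitive on $\Omega=\alpha^{EF}$. This also settles your hesitant ``index~$1$'' bullet at once.
\item[(b)] It picks any $F_0\le F$ of order $2$, which makes sense whether $|F|$ is $2$ or $4$. It then runs the index-$2$ scan for $E\times F_0$ over fixity-$2$, $3$ and $4$ overgroups.
\item[(c)] Besides the $\PSL_2(q)$ families, this scan yields $\Alt_6$ with $E_\alpha\cong E_9$ inside the fixity-$4$ stabiliser $E_9:C_2$. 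Your fixity-$2$-only scan in Step~2 misses this case, although $\bar E$ a priori has fixity $2$, $3$ or $4$. It is removed by exactly your involution argument: $|C_{E F_0}(t)|=16$, while $C_{(EF_0)_\alpha}(t)=\langle t\rangle$.
\item[(d)] In the surviving cases $E$ acts with fixity $2$ on the $F_0$-orbits. If $|F|=4$, then $G/F_0\cong C_2\times E$ has a transitive fixity-$2$ component and nontrivial Fitting subgroup. Lemma~\ref{EtransFix2} then forces $q=7$ with $\bar E_{\bar\alpha}\cong\Alt_4$, which contradicts $|\bar E_{\bar\alpha}|=2|E_\alpha|=4$.
\end{enumerate}
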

\begin{proof}
Let $E := E(G)$ and suppose $F := F(G) \neq 1$. By Lemma \ref{FfixedE1}, $E \neq 1$ implies that $F$ acts semi-regularly on $\Omega$. Since $E$ acts with fixity $4$ on $\alpha^E$, we may assume that $G = EF$ and that $\Omega = \alpha^{EF}$. Let $x \in E$ such that $x$ fixes exactly four points in $\alpha^E$. Since $F$ centralises $x$, it stabilises $\fix_{\Omega}(x) \subseteq \alpha^E$ and therefore also $\alpha^E$. In particular, we have $\alpha^E = \alpha^{EF} = \Omega$ and $E$ acts transitively on $\Omega$. Moreover, the semi-regular action of $F$ yields $|F| \in \{2,4\}$. Hence $F$ is abelian and therefore $F = Z(EF) = Z(G)$. Let $F_0 \leq F$ be a subgroup of order $2$, $\bar G := G / F_0$ and $\bar \Omega$ be the set of $F_0$-orbits.

First, we consider the group $H := E \times F_0$. Note that $H$ acts transitively and with fixity $4$ on $\Omega$ because $E$ acts with fixity $4$ on $\alpha^E = \Omega$. We apply Lemma \ref{centrereduction} to this action and obtain that the simple group $\bar H \cong E$ acts transitively, faithfully and non-regularly with fixity at most $4$ on $\bar \Omega$. Moreover, $H_\alpha \cong \bar{H}_{\bar \alpha} = \bar{E}_{\bar \alpha}$ contains $E_\alpha$ as a subgroup of index $2$. By Remark \ref{CompInd}~(b), the action of $\bar H$ has fixity $2$, $3$ or $4$. We go through Table \ref{TableAllFix} and look for fixity $4$ actions of $E$ such that there exists a fixity $2$, $3$ or $4$ action for $\bar H \cong E$ such that $\bar{H}_{\bar \alpha} \cong H_\alpha$ contains $E_\alpha$ as an index $2$ subgroup. We find that this is only possible if $E \cong \PSL_2(q)$ for some odd prime power $q \geq 7$ with $E_\alpha$ as described in Case (1) or (2), or if $E \cong \Alt_6 \cong \PSL_2(9)$ and $E_\alpha$ is elementary abelian of order $9$. In the latter case, $H_\alpha \cong E_9 : C_2$ and since $\PSL_2(9)$ contains a single class of involutions and $H = F_0 \times E$, the involutions in $H_\alpha$ must be $2$-central. In particular, $|C_H(E)|$ is divisible by $16$ in contradiction to Lemma \ref{normaliser}\,(ii). Thus, $E_\alpha$ must be as described in Case (1) or (2). Note that in both cases, $\bar H \cong E$ acts with fixity $2$ on $\bar \Omega$.

It remains to show that $|F| = 4$ is impossible. Assume otherwise and consider the action of $\bar G = \bar F \times \bar E \cong C_2 \times E$ on $\bar \Omega$. By Lemma \ref{centrereduction}, this action has fixity at most $4$ and we already saw that $\bar E$ acts transitively and with fixity $2$ on $\bar \Omega$ and that $\bar E \cong E \cong \PSL_2(q)$ for some odd prime power $q \geq 7$. Applying Lemma \ref{Efix2orbits} to the action of $\bar F \times \bar E$ on $\bar \Omega$ gives $q = 7$ and $\bar{E}_{\bar \alpha} \cong \Alt_4$. But $E_{\alpha} \cong C_2$ as described in Case (2) and thus $|\bar{E}_{\bar \alpha}| = 4$. This contradiction show that $|F| = 4$ is impossible.
\end{proof}

\bigskip
\begin{lemma}\label{compfix4FGquasisimple}
	Suppose that Hypothesis \ref{hyp} holds, that $E(G)$ is quasi-simple, but not simple, and that $E(G)$ acts with fixity $4$ on $\alpha^{E(G)}$. Then one of the following holds:
	\begin{enumerate}[(1)]
		\item $F(G) = Z(E(G)) \cong C_2$.
		\item $G \cong \SL_2(5) : C_2$ with $G_\alpha \cong \Sym_3$ or $G_\alpha \cong D_{10}$.
		\item $G \cong \GL_2(5)$ with $G_\alpha \cong C_5 : C_4$.
	\end{enumerate}
\end{lemma}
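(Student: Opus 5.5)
Let $E := E(G)$, $F := F(G)$ and $Z := Z(E)$. The plan is to first fix the structure of $E$ and $F$, and then treat the case $F \neq Z$ separately.

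\emph{Step 1: structure of $E$ and $F$.} Since $E$ is quasi-simple but not simple, Lemma~\ref{quasisimple} gives that $E$ is isomorphic to $\SL_2(q)$ with $q$ odd, to $2^{\cdot}\Sz(8)$, or to $2^{\cdot}\PSL_3(4)$. In each case $|Z| = 2$. Moreover, the point stabilisers $E_\alpha$ have odd order, and $E/Z$ acts with fixity $2$ on the $Z$-orbits. By Lemma~\ref{FfixedE1}, $F$ acts semi-regularly.

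\emph{Step 2: bound $|F|$.} Take $x \in E_\alpha$ of prime order with four fixed points on $\alpha^E$. Then $F \le C_G(x)$, so $F$ stabilises $\FO(x) \subseteq \alpha^E$. Semi-regularity then forces $|F| \in \{2,4\}$, and also $\alpha^{EF} = \alpha^E$. As $Z \le F$, either $F = Z$, which is Case~(1), or $|F| = 4$. In the latter case $F$ is abelian and commutes with $E$, so $F \le Z(F^*(G))$.

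\emph{Step 3: the case $|F| = 4$.} Pass to $\bar G := G/Z$ acting on the set $\bar\Omega$ of $Z$-orbits, using Lemma~\ref{centrereduction}. This action has fixity at most $4$, and $\bar E \cong E/Z$ is simple and acts with fixity $2$ on its orbits. Also $\bar F = F/Z$ has order $2$ and is contained in $F(\bar G)$ (it is central in $\bar E\bar F$).

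Now apply Lemma~\ref{Efix2}, or Lemma~\ref{EtransFix2} if $\bar E$ is transitive on $\bar\Omega$, to $\bar G$. To do this we must check that $F(\bar G) = \bar F$ and $E(\bar G) = \bar E$, via $C_{\bar G}(\bar E)$ and Step~1. We must also rule out alternative~(2) of Lemma~\ref{Efix2}, where $\bar F$ interchanges two $\bar E$-orbits. That alternative is impossible because $F$ fixes $\alpha^E$ setwise, so $\bar E$ is transitive on $\bar\Omega$.

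Lemma~\ref{EtransFix2} then leaves $\bar E\bar F \cong \Alt_5 \times C_2$ or $\PSL_2(7)\times C_2$, with $\bar G$ equal to $\Alt_5 \times C_2$, $\Alt_5 : C_4$, or $\PSL_2(7)\times C_2$, and with point stabilisers $E_4$, $\Sym_3$, $D_{10}$, $C_5 : C_4$, or $\Sym_4$. Since $E_\alpha$ has odd order, so does $\bar E_{\bar\alpha} \cong E_\alpha$. This excludes $E_4$ and $\Sym_4$ (for $\Sym_4$, $E_\alpha \cong \Alt_4$ has even order) and forces $E \cong \SL_2(5)$ with $E_\alpha \cong C_3$ or $C_5$.

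Lifting back: $G$ contains $F E = \SL_2(5) \circ C_4$, the index-$2$ subgroup of $\GL_2(5)$ of scalar-times-$\SL_2(5)$ matrices, and $G/Z$ is $\Alt_5\times C_2$ or $\Alt_5:C_4$. The index is $|G:FE| \le 2$.
\begin{itemize}
\item If $|G : FE| = 1$, the point stabiliser is $\Sym_3$ or $D_{10}$ but contains no involution outside $E$ of the required kind. I would rule this case out by checking Lemma~\ref{normaliser}(ii) on centralisers in $FE$.
\item If $|G : FE| = 2$, identify $G$ as an extension of $\SL_2(5)\circ C_4$ by $C_2$. The options are $\GL_2(5)$, or the group $\SL_2(5):C_2$ with $F = C_4$ (that is, $\SL_2(5)\circ C_4$ extended). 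The $\Alt_5:C_4$ quotient with stabiliser $C_5:C_4$ yields $\GL_2(5)$, and the $\Alt_5\times C_2$ quotient with $\Sym_3$ or $D_{10}$ yields $\SL_2(5):C_2$.
\end{itemize}

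\emph{Main obstacle.} The hardest part is this final identification of the extension. I would settle it by using $\aut(\SL_2(5)\circ C_4)$ together with the fact that $G_\alpha \cap F = 1$, and if needed confirm with \texttt{GAP}, consistent with Example~\ref{ExFinalTheorem2eiv}.
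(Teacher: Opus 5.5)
Up to $E\cong\SL_2(5)$ with $E_\alpha\cong C_3$ or $C_5$, you follow the paper's route. That route is Lemma~\ref{quasisimple}, then $F$ stabilising $\FO(x)\subseteq\alpha^E$, then passage to $\bar G=G/Z$ and Lemmas~\ref{Efix2} and~\ref{EtransFix2}. Your use of the odd order of $E_\alpha$ replaces the paper's comparison with Lemma~\ref{SLquasisimple}. The lifting step, however, assigns the cases wrongly.

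Since $Z\le FE$, you have $|G:FE|=|\bar G:\bar E\bar F|$, and $|\bar E\bar F|=120$. So $|G:FE|=1$ exactly when $\bar G\cong\Alt_5\times C_2$ (stabilisers $\Sym_3$, $D_{10}$), and $|G:FE|=2$ exactly when $\bar G\cong\Alt_5:C_4$. The case $G=FE$ that you propose to rule out is therefore precisely Case~(2) of the statement. Indeed, $FE=\SL_2(5)\circ C_4$ splits over $E$: for a scalar $i$ of order $4$ and $g\in E$ of order $4$ we get $(ig)^2=i^2g^2=1$, so $FE\cong\SL_2(5):C_2$.

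No contradiction from Lemma~\ref{normaliser}(ii) exists in that case. By Example~\ref{ExFinalTheorem2eiv}, $G=\SL_2(5)\circ C_4$ acts with fixity $4$ on the $24$ non-zero vectors of $\GF(5)^2$ with $G_\alpha\cong D_{10}$. The involution $t=\mathrm{diag}(1,-1)$ lies in $G_\alpha\setminus E$ and satisfies $|C_G(t):C_{G_\alpha}(t)|=8/2=4$.

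Conversely, an index-$2$ overgroup of $FE$ has order $480$. Its quotient by $Z$ then has order $240$, so it cannot be $\Alt_5\times C_2$. Your assignment of $\SL_2(5):C_2$ with $\Sym_3$ or $D_{10}$ to the index-$2$ case is therefore impossible on order grounds. As written, your case analysis would either stall or exclude a genuine example.

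You also assert $FE\cong\SL_2(5)\circ C_4$ without excluding $F\cong E_4$, and this needs an argument. Suppose $F=Z\times\langle c\rangle$. Then the only involutions of $FE=E\times\langle c\rangle$ are $z,c,zc$, all lying in $F$. Now $G_\alpha\cap FE$ has index at most $2$ in $G_\alpha\in\{\Sym_3,D_{10},C_5:C_4\}$, so it contains an involution, and $E_\alpha$ has odd order. Hence $G_\alpha$ would contain $c$ or $zc$, contradicting the semi-regularity of $F$.

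A smaller point: Lemma~\ref{Efix2} is stated under Hypothesis~\ref{hyp}, which requires fixity exactly $4$, but you only know that $\bar G$ has fixity at most $4$. The paper first applies Lemma~\ref{EtransFix2} to $F^*(\bar G)=\bar E\times\bar F$ acting on $\bar\alpha^{\bar E}$, where $\bar E$ is transitive. That yields fixity exactly $4$, and only then does the paper invoke Lemma~\ref{Efix2}.

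Also, $F$ stabilising $\alpha^E$ only gives $\alpha^{EF}=\alpha^E$. Transitivity of $\bar E$ on $\bar\Omega$ comes from alternative~(3) of Lemma~\ref{Efix2} once alternative~(2) is excluded, not directly from the setwise stabilisation.
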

\begin{proof}
Let $E := E(G)$, $F := F(G)$ and $\Delta := \alpha^{EF}$. Let $x \in E_\alpha$ such that $x$ has four fixed points on $\alpha^E$. Then $\FO(x) \subseteq \alpha^E$ and since $F \leq C_E(x)$, it must stabilise $\FO(x)$ and thereby $\alpha^E$ set-wise. Thus $\Delta = \alpha^{EF} = \alpha^E$ and $E$ acts transitively on $\Delta$.

 We know by Lemma \ref{quasisimple} that $Z := Z(E) = F \cap E$ has order $2$ and that $E/Z$ acts with fixity $2$ on the set of $Z$-orbits on $\Delta$. If $F = Z$, then $|F| = 2$ as described in Case (1). Assume that $|F| > 2$. Since $Z \leq Z(G)$, we can use Lemma \ref{centrereduction} to see that $\bar G := G / Z$ acts with fixity at most $4$ on the set of $Z$-orbits $\bar \Omega$ and that $G_\alpha \cong \bar{G}_{\bar \alpha}$. Since $F^*(\bar G) = \bar F \times \bar E$ and $\bar E$ acts with fixity $2$ on $\bar \Delta$, Lemma \ref{EtransFix2} yields that $\bar E$ is isomorphic to $\Alt_5$ or $\PSL_2(7)$ and thus $E$ is isomorphic to $\SL_2(5)$ or $\SL_2(7)$. Moreover, it states that $F^*(\bar G)$ acts with fixity $4$ on $\Delta$ and thus $\bar G$ acts with fixity $4$ on $\bar \Omega$. This allows us to apply Lemma \ref{Efix2} to the action of $\bar G$ and since $\bar{\alpha}^{\bar E \bar F} = \bar \Delta$, we obtain that $\Delta = \Omega$. Thus $E$ is transitive on $\Omega$ and we can once again turn to Lemma \ref{EtransFix2}. Comparing the possible point stabiliser structures listed there with the point stabilisers described in Lemma \ref{SLquasisimple} leads to the Cases (2) and (3) of the statement.
\end{proof}


\section{The proof of Theorem \ref{FinalTheorem}}

We have already seen that all cases in the theorem occur, which is why we focus on the converse statement now. 

As stated in our main result, we work under Hypothesis \ref{hyp}.
We first suppose that $F(G)\cap G_{\alpha} \neq 1$. Then $E(G)=1$ by Lemma~\ref{FfixedE1}. If $F(G)$ acts faithfully on $\alpha^{F(G)}$, then Lemma \ref{FixityFG} states that $F(G)$ is a $2$-group with sectional $2$-rank at most $4$ that acts with fixity $4$ on $\alpha^{F(G)}$ and by Lemma \ref{2^3.3^2}, $|G_\alpha|$ divides $2^3 \cdot 3^2$. This is Case (1)\,(a).
Otherwise, we apply Lemma~\ref{FittingGAP} and find that all the information about $F(G)$ is contained in Case (1)\,(b). %
	
	\medskip
	
	Therefore, from now on, we suppose that $F(G)$ acts semi-regularly on $\alpha^{F(G)}$.
	If $F(G)\neq O_2(G) \times O_3(G)$, then Lemma~\ref{FGsemireg} gives that, for all $p\in \pi(G_{\alpha})$, the Sylow
	$p$-subgroups of $G$ have rank $1$. Thus, it remains to prove that one of the cases (a) -- (e) of~(2) holds.

	Suppose that Case (2)\,(a) is not true. Then Proposition~\ref{onecomp} shows that $E(G)$ is quasi-simple. By Hypothesis \ref{hyp}, $E(G)$ acts with fixity at most $4$ on $\alpha^{E(G)}$ and Remark \ref{CompInd}~(b) states that fixity $1$ is impossible. We are thus left with four different cases.

    \begin{itemize}
        \item If $E(G)$ acts semi-regularly on $\Omega$, then Lemma~\ref{Enonreg} gives $E(G)/Z(E(G)) \cong \Alt_5$ and Lemma \ref{regularGaoptions} lists all possibilities for the point stabilisers. This is Case (2)~(b).

        \item If $E(G)$ acts with fixity $2$ on $\alpha^{E(G)}$, then $E(G) \cong \PSL_2(q), Sz(q)$ or $\PSL_3(4)$ by Lemma \ref{quasisimpleFix23} and Table \ref{TableAllFix}. We apply Lemma~\ref{Efix2} and see that the cases listed there correspond to the subcases in Case (2)\,(c) of Theorem \ref{FinalTheorem}.

        \item If $E(G)$ acts with fixity $3$ on $\alpha^{E(G)}$, then Lemma~\ref{Efix3} gives all information in Case~(2)\,(d) except for the size of the Fitting subgroup of $G$. But since we know what the group $G$ is in all cases, we can see that $F(G)=1$.

        \item If $E(G)$ acts with fixity $4$ on $\alpha^{E(G)}$, then we have to distinguish between $Z(E(G)) = 1$ and $Z(E(G)) \neq 1$.
		If $E(G)$ is simple, then Theorem~1.3 in \cite{BHMSW} shows that $E(G)$ is one of the groups described in (2)\,(e)\,(i). Moreover, if $|F(G)| > 1$, then $|F(G)| = 2$ by Lemma~\ref{compfix4FGsimple} and we obtain Case (2)\,(e)\,(ii).
		Otherwise $E(G)$ is quasi-simple and non-simple. Then we apply Lemma~\ref{quasisimple} and we see that $E(G)$ is one of the groups described in (2)\,(e)\,(iii). Finally, Lemma~\ref{compfix4FGquasisimple} shows that Case (2)\,(e)\,(iv) holds whenever $|F(G)| \neq 2$.
    \end{itemize}

\normalem

\end{document}